\documentclass[12pt]{amsart}
\usepackage[top=1in, bottom=1in, left=1in, right=1in]{geometry}
\usepackage{amsmath}
\usepackage{amsfonts}
\usepackage{amssymb}
\usepackage{amsthm}
\usepackage[usenames, dvipsnames]{xcolor}
\usepackage{tikz}
\usepackage{tikz-cd}

\pgfdeclarelayer{fg} % declare foreground layer
\pgfsetlayers{main,fg} % set layer order

\usetikzlibrary{automata,topaths,arrows.meta,shapes,positioning,calc,decorations.pathreplacing,shapes.multipart}

\usepackage{graphicx}
\usepackage{caption}
\usepackage{pgfplots}
\usepackage{float}
\usepackage{mathrsfs}
\usepackage{wrapfig}
\usepackage{hyperref}
\usepackage[backend=bibtex]{biblatex}
\usepackage{enumitem}
\usepackage[normalem]{ulem}
\usepackage{booktabs}
\bibliography{bigdata1}

\usepackage{lineno}
%\linenumbers

% THEOREMS
\theoremstyle{plain}  % Text in italic, and adds extra space before and after
\newtheorem{thm}{Theorem}[section]
\newtheorem{lem}[thm]{Lemma}
\newtheorem{prop}[thm]{Proposition}
\newtheorem{cor}[thm]{Corollary}

\theoremstyle{definition}  %  Text in roman, and adds extra space before and after
\newtheorem{defn}[thm]{Definition}

\theoremstyle{remark}  % Text in roman, but does not add extra space before or after

\newtheorem{rem}[thm]{Remark}

% SET-RELATED MACROS
%\newcommand{\set}[1]{\left| {#1}\right|}

% BOLD LETTERS

\newcommand{\bC}{{\bf C}}

\newcommand{\bF}{{\bf F}}

\newcommand{\bRN}{{\bf RN}}

% BLACKBOARD BOLD LETTERS

\newcommand{\B}{{\mathbb{B}}}

\newcommand{\N}{{\mathbb{N}}}

\newcommand{\R}{{\mathbb{R}}}

% CALIGRAPHIC LETTERS

\newcommand{\cR}{{\mathcal R}}

% San Serif LETTERS

% Eufrak Letters

% CIRCLES

%  Maps and Arrows

% COMBINATORIAL DYNAMICS MACROS

%\newcommand{\supp}[1]{\left| {#1}\right|}

% DYNAMICS MACROS

% TOPOLOGY

% LINEAR ALGEBRA

%\newcommand{\rank}{\mathop{\rm rank}}

%algebraic

% \DeclareMathOperator{\sgn}{sgn}

% COLORS AND GRAY SCALES

\definecolor{gray85}{gray}{0.85} % 15%
\definecolor{gray8}{gray}{0.8} % 20%
\definecolor{gray7}{gray}{0.7} % 30%
\definecolor{gray6}{gray}{0.6} % 40%
\definecolor{gray5}{gray}{0.5} % 50%
\definecolor{gray4}{gray}{0.4} % 60%
\definecolor{gray35}{gray}{0.35} % 65%

% switching paper

%\newcommand{\Sources}{\mathsf{Sources}}
%\newcommand{\Targets}{\mathsf{Targets}}

\newcommand{\ep}{\epsilon} % shorter commands for common greek letters

\newcommand{\bbR}{\mathbb{R}} %shorter commands for ``Blackboard Bold" letters

% \newcommand{\sMD}{\mathsf{MD}}

%\newcommand{\cT}{\mathcal{T}} %shorter commands for curly letters
%\newcommand{\cK}{\mathcal{K}}
%\newcommand{\cC}{\mathcal{C}}
%\newcommand{\cI}{\mathcal{I}}
%\newcommand{\cW}{\mathcal{W}}
%\newcommand{\cL}{\mathcal{L}}
%\newcommand{\cF}{\mathcal{F}}
%\newcommand{\cV}{\mathcal{V}}
%\newcommand{\cA}{\mathcal{A}}

 %short commands for awkward common expressions

\newcommand{\abs}[1]{\left| #1 \right|} %shorthand absolute value
\usepackage{color, colortbl}
\definecolor{forestgreen}{rgb}{0.13, 0.55, 0.13}

\definecolor{darkgray}{rgb}{.55,.55,.55}
\definecolor{lightgray}{rgb}{.8,.8,.8}
\definecolor{verylightgray}{rgb}{1,1,1}

\newcommand{\ttCol}{\mathtt{Col}}

\newcommand{\Truth}{\text{\sf True}}
\newcommand{\False}{\text{\sf False}}

\definecolor{applegreen}{rgb}{0.55, 0.71, 0.0}

\newcommand{\MBF}{\mathbf{MBF}}

\title{Joint Realizability of Monotone Boolean Functions}
\author{Peter Crawford-Kahrl, Bree Cummins, and Tomas Gedeon}

\begin{document}

\begin{abstract}
The study of monotone Boolean functions (MBFs)  has a long history. We explore a connection between MBFs and ordinary differential equation (ODE) models of gene regulation, and, in particular, a problem of the \textit{realization} of an MBF as a function describing the state transition graph of an ODE.
We formulate a problem of \textit{ joint realizability}  of finite collections of MBFs by establishing a connection 
between the parameterized dynamics of a class of ODEs and a collection of MBFs. We pose a question of what collections of MBFs can be realized  by ODEs that belong  to nested classes defined by increased algebraic complexity of their right-hand sides. As we progressively restrict the algebraic form of the ODE, we show by a combination of theory and explicit examples that the class of jointly realizable functions strictly decreases.
Our results impact the study of regulatory network dynamics, as well as the classical area of MBFs. We conclude with a series of potential extensions and conjectures. 
\end{abstract}
\maketitle

%\tableofcontents

\section{Introduction}
The study of Boolean functions in general and monotone Boolean functions in particular has a long history~\cite{church1940nunmerical,paull1960boolean,elgot1961truth,muroga1970enumeration,winder1962threshold,chow1961boolean,korshunov03}.
One area in which monotone Boolean functions (MBFs) have been used is in modeling the dynamics of gene regulatory networks. In these models the (Boolean) state of each node $i$ in the regulatory network is updated based on the (Boolean) states of the nodes $j$ that that are connected by an edge from $j$ to $i$. The monotonicity requirement on a Boolean function (Definition~\ref{pos}) reflects the fact that the edges in gene regulatory networks are signed and thus the effect of one gene on another is always either monotonically increasing (activating edge) or monotonically decreasing (repressing edge). 

An alternative class of network models uses continuous time dynamics of ordinary differential equations. We are interested in a particular class of such models  with piecewise linear right-hand sides~\cite{ThieffryThomas95,glass:kaufman:72, glass:kaufman:73,Thieffry06,edwards00,Bernot2007,Snoussi89}.
For the most general of these models, which we call \textit{$K$-systems}, 
the right-hand side is fully determined by a finite collection of constants $K = (K_1, \ldots, K_n)$, where $K_i$ is a collection of constants that describes the activity of node $i$ in the regulatory network.
Each collection $K_i$ within $K$ also satisfies a monotonicity condition  that reflects the monotone effect of the edges in the network.

The main goal of this paper  is to show that there is  a close  relationship between $K$-systems and collections of monotone Boolean functions. In order to show this connection, we first show that to each $K$-system one can associate 
a state transition graph (STG), which is a finite directed graph  that coarsely captures the progression of the trajectories of the $K$-system. There are a finite number of STGs, which permits an imposition of an  equivalence relation on the (infinite) set of $K$-systems, with an equivalence class denoted $[K]$.

Our first major result is the correspondence between the equivalence classes $[K]$ and collections of MBFs. 
For a fixed regulatory network with $n$ nodes, each equivalence class $[K]$ has the form $[K] = ([K_1], \ldots, [K_n])$.  Then each $[K_i]$ for a node $i$ with $m_i$ input edges and $b_i$ output edges corresponds to a collection of $b_i$ monotone Boolean functions with $m_i$ inputs. Moreover, each such collection of $b_i$ MBFs, satisfying an additional condition that the truth sets are linearly ordered by inclusion, is associated to an equivalence class  $[K_i]$. Using this result, the equivalence classes $[K]$ are arranged into a parameter graph (PG) specific to the  regulatory network under consideration. The edges of the PG are determined by the adjacency of the collections of MBFs associated to each $[K]$.  

Our next set of results examines the effect of imposing algebraic restrictions on the form of the right-hand side of the differential equations of the  network model, which results in additional structure on the set $K$.  These additional algebraic  restrictions decrease the size of the parameter graph.  We ask which MBFs, and which collections of MBFs, are realizable as parameter nodes of the corresponding restricted parameter graphs.

The classes of algebraic functions that we chose to  examine are nested; the most restricted and smallest class consists of linear functions, $\Sigma$, which is a subset of functions that can be obtained as products of sums of individual variables, $\Pi\Sigma$, and lastly sums of products of sums, $\Sigma\Pi\Sigma$. These classes are all special cases of $K$-systems and therefore admit STGs and PGs. These algebraic restrictions are motivated by the software DSGRN~\cite{Cummins16,Cummins2017b,Gedeon18,Gedeon2020}, which calculates the PGs and STGs for network models with the class of $\Pi\Sigma$ functions, and in principle can be extended to other classes of algebraic expressions.

We show that the classes $\Sigma$, $\Pi\Sigma$, and $\Sigma\Pi\Sigma$ do impose constraints on pairs of MBFs that can be realized as parameter nodes of a PG. In fact, we show that the classes of pairs of MBFs with three inputs that are realizable as linear functions are a strict subset of $\Pi \Sigma$-jointly realizable pairs, which is in turn a strict subset of  $\Sigma \Pi \Sigma$-jointly realizable pairs of MBFs. 
We also show that there are pairs of MBFs for any $n\geq 4$ inputs that are $K$-jointly realizable but are not $\Sigma \Pi \Sigma$ realizable. 

These results show that the increased complexity of the algebraic expression provides a richer class of models as measured by the set of MBFs that can be realized in a PG. At the same time, the connection between differential equation models and collections of MBFs allows for the formulation of a host of interesting questions (see the Discussion) about what $k$-tuples of MBFs can be realized as nodes of parameter graphs of differential equation models as the complexity of the right-hand side varies.

%%%%%%%%%%%%%%%%%%%%%%%%%%%%%%%
\section{$K$-systems and Monotone Boolean Functions}

A regulatory network is a  useful abstraction for organizing information about interacting units. Nodes represent units and (directed) edges interaction between the nodes. 

\begin{defn}\label{def:reg_net}
A \textit{regulatory network} {\bf RN} is a triple $\mathbf{ RN} := \{ V, E, s\}$ where
\begin{itemize}
\item $V$ is the set of \textit{vertices};
\item $E \subset V \times V$ is a finite set of oriented \textit{edges}, where  $(i,j)$ denotes the edge from $i$ to $j$;
\item $s: E \to  \{ +,-\}$ is the \textit{sign} of the edge.
\end{itemize}
We will generally use $n = |V|$.
We denote $S(i)$ to be the set of \textit{sources} of node $i$  and $T(i)$ the set of \textit{targets} of node $i$:
\[ 
S(i) = \{ j \in V \;|\; (j,i) \in E \}  \mbox{ and } T(i) = \{ j \in V \;|\; (i,j) \in E \}.
\] 
We split the set of sources into activating and repressing inputs as $S(i) = S(i)^+ \cup S(i)^-$ where 
\[j \in S^+(i) \mbox{ iff } e =(j,i)\in E \mbox{ and } s(e) = +\] and \[j \in S^-(i) \mbox{ iff } e =(j,i)\in E \mbox{ and } s(e) = -\]

\end{defn}
 The interpretation of the signed edges comes from biology; a positive edge signifies up-regulation, where the rate of change of the target node concentration increases as the concentration of the source node increases. A negative edge signifies down-regulation, where the rate of change of the target node concentration decreases as the concentration of  the source node increases. Inherent in this description is \textit{monotonicity} of the rate of change of the target node with respect to changes in each of the source nodes~\cite{albert:collins:glass,Gedeon2020,Heatha2009,Machado2011,Shamir2008,Kains2014a}.

One of the natural ways to associate dynamics to a network is using Boolean functions. Every node is assumed to be either OFF, corresponding to low concentration, represented by the  state $0$, or ON, corresponding to high concentration, represented by the state $1$.

\begin{defn}
Let $\B:= \{0,1\}$. We will use the notation $\B^n:= \{0,1\}^n$ for the vertices of a hypercube of dimension $n$. 
A \textit{Boolean function} is a function $f: \B^n \to \B$.
\end{defn}

In examples, we will often write elements of $\B^n$ as strings (e.g.\ $10010\in \B^5$).

\begin{defn}\cite{CH11} \label{pos}
A Boolean function $f : \B^n \to \B$  is \emph{positive}  (resp. \emph{negative}) in $x_i$ if $f|_{x_i =0} \leq f|_{x_i =1}$ (resp. $f|_{x_i =0} \geq f|_{x_i =1}$), where  $f|_{x_i =0}$ (resp.  $f|_{x_i =1}$)  denotes the value of $f(x_1,...,x_{i-1},0,x_{i+1},...,x_n)$  (resp. $ f(x_1,...,x_{i-1},1,x_{i+1},...,x_n)$) for any Boolean values of $x_1, \ldots, x_{i-1}, x_{i+1}, \ldots x_n$. We say that  $f$  is \textit{monotone  in $x_i $} if it is either positive or negative in $x_i$. $f $ is \textit{monotone} if it is monotone in $x_i$  for all $i \in \{1,...,n\}$.
\end{defn}

Positive and negative monotone Boolean functions (MBF) capture the effect of positive and negative edges in the network {\bf RN}, respectively.
We will use the notation
\begin{align*}
\MBF(n) & := \{f:\B^n\to\B \mid f \text{ is monotone}\} \\
\MBF^+(n) & := \{f:\B^n\to\B \mid f \text{ is positive in $x_i$ for all $i \in \{1,...,n\}$}\}
\end{align*}
The dynamics of the network with $n$ nodes  is described by iteration of  $f: \B^n\to \B^n$, where   $f:=(f_1, f_2, \ldots, f_n)$ is a collection of MBFs.
Monotone Boolean function models are widely used due to their simplicity, but matching their predictions to experimental values of continuous variables like concentration always poses a challenge. An effort to combine the simplicity of Boolean maps with a continuous time description was initiated  by~\cite{glass:kaufman:72,glass:kaufman:73,Thomas1973}. To explain this approach we extend the definition of regulatory network given in Definition~\ref{def:reg_net}.

  \begin{defn} \label{def:param_reg_net}
  A \textit{weighted regulatory network} is a regulatory network \textbf{RN} with positive, real-valued weights assigned to each node,
  \[ \gamma = (\gamma_1,\dots,\gamma_n) \]
  with $n = |V|$, and positive, real-valued weights assigned to each edge
  \begin{equation} \label{order} 
    \boldsymbol \theta_i = \{\theta_{ s_1 i}, \theta_{s_2 i}, \dots , \theta_{s_{b_i} i} \}  \mbox{ where } \{ s_1, s_2, \dots, s_{b_i} \} = T(i),
    \end{equation}
    with $\theta = \bigcup_i \theta_i$. We want to bring attention to the  the indexing we use: $\theta_{ji}$ is associated to the edge from $i$ to $j$, in the tradition of~\cite{Cummins16}. 
    The node weights are called \textit{decay rates} and the edge weights are called \textit{thresholds}.
    We assume that for each node $i$, the $b_i$  thresholds in the collection $\boldsymbol \theta_i$  are distinct. 
 \end{defn}

The idea of decay rates comes from biology and indicates how quickly a gene product will break down under natural cellular processes. A common model of enzymatic gene regulation 
is the sigmoidal Hill function model, which has a half-saturation value. These half-saturation values are sometimes treated as thresholds, here represented as weights on edges. An example weighted regulatory network is shown in Figure~\ref{fig:exampleRN}.

\begin{figure}
    \centering
   \begin{tikzpicture}[node distance = 3cm,every text node part/.style={align=left}]

\tikzset{my node/.style = {circle, draw, inner sep=1pt,minimum size=0pt}}

  \node[my node] (1) {$1$};
  \node[my node] (2) [right=of 1] {$2$};
  \node (3) [right=0.7cm of 2] {$\theta_{11}=3$\\$\theta_{12}=1.5$\\$\theta_{21}=2$};
  \node (4) [right=0.3cm of 3] {$\gamma_1 = 1$\\$\gamma_2=1$};

\tikzset{my edge/.style = {very thick,->,shorten >= 3pt,shorten <= 3pt}}

  \draw[my edge] [loop left, looseness=17] (1) to ["$\theta_{11}$"] (1);
  \draw[my edge] [bend right] (1) to ["$\theta_{21}$"'] (2);
  \draw[my edge] [bend right, -|] (2) to ["$\theta_{12}$"'] (1);
\end{tikzpicture}

\vspace{.5cm}
\begin{tabular}{ll}
$S^+(1) = \{1\} $ & $S^-(1)= \{2\}$  \\
$S^+(2) = \{1\} $ & $S^-(2)= \emptyset$  \\
\end{tabular}
    \caption{An example weighted regulatory network. Here $V =\{1,2\}$. We use $\to$ to denote a positive (activating) edge, and $\dashv$ to denote a negative (inhibiting) edge. We also illustrate the sets of sources for each node.}
    \label{fig:exampleRN}
\end{figure}
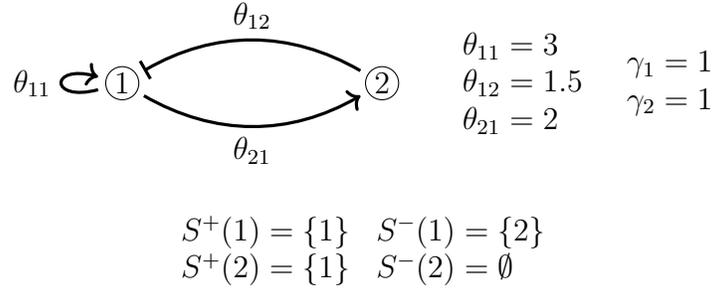

\subsection{K-systems} \label{subsection:Ksystem}

The most general  attempt to combine the  simplicity of Boolean maps with a continuous time description resulted in {\it switching $K$-systems}, consisting 
of a system of differential equations on $\bbR^n_+$. 
The ``$K$'' in $K$-system denotes  a finite collection of real values that satisfy a monotonicity assumption (Definition~\ref{def:Kmono}) and are used to parameterize a system of ordinary differential equations (ODEs) with  discontinuous right-hand sides. 

Given a weighted regulatory network, we associate to each node $i$ a continuous non-negative variable $x_i \in \bbR_+$, usually representing the concentration of a gene product. We write the collection  of gene concentrations as a vector $x = (x_1,\ldots,x_n) \in \bbR^n_+$. The thresholds $\boldsymbol \theta_{i}$ divide the $x_i$ axis into $b_i+1$ intervals, where  $b_i = |\boldsymbol \theta_i|$ is the number of targets of node $i$. We enumerate these by the integers $0, \ldots, b_i$ in ascending order. 
Then 
  \[X= \R^n_+ \setminus \{ x_i = \theta_{ji} \;|\; i \in {1, \dots, n}, \quad \theta_{ji} \in \boldsymbol \theta_i \}\] 
  is an open rectangular grid where each component of $X$ is an open domain. 
   As we will see next, the collection of  real numbers $K$ determines an ODE system defined on $X$ whose solutions are consistent with a discrete mapping between open domains in $X$ and this   discrete map can be interpreted as a collection of $\MBF$s.  The following definition of the $K$-system goes back to at least Thieffry and Romero~\cite{Thieffry99}; we follow the exposition in \cite{Bernot2007}. 

\begin{defn} \label{def:Kmono}
  Recall the definition of a regulatory network \textbf{RN} in Definition~\ref{def:reg_net}, particularly the nodes $V$ and the sources $S^\pm(i)$. Let
  \begin{equation}\label{eq:K}
  K := \{ K_{i,A,B} \in \bbR_+ \;|\;  i \in V, A \subset S^+(i), B \subset S^-(i) \}
  \end{equation}
  be a collection of non-negative numbers that satisfies the \emph{monotonicity assumption}:
  \begin{itemize}
  \item For each $i \in V$, if  $A \subsetneq A' \subset S^+(i)$ then 
  \[ K_{i,A,B} \leq K_{i,A',B} \quad \mbox{ for all } B \subset  S^-(i). \]
  \item For each $i \in V$, if  $B\subsetneq B' \subset S^-(i)$ then 
  \[ K_{i,A,B} \geq K_{i,A,B'} \quad \mbox{ for all } A \subset  S^+(i). \]
  \end{itemize}
 Fixing a collection of constants $K$ satisfying the monotonicity assumption, define a \emph{parameter assignment function} for each continuous variable $x_i$ from the power sets of $S^+(i)$ and $S^-(i)$ into the positive real numbers:
\[
k_i :  2^{S^+(i)} \times 2^{S^-(i)} \to \bbR_+ , \qquad k_i(A,B)  :=K_{i, A,B}
 \]
 We write  $k = (k_1,\dots,k_n)$ as the collection of these parameter assignment functions, one for each component of the system. 
    \end{defn}

  We continue the example from Figure~\ref{fig:exampleRN} by listing an example assignment of numbers $K$ that satisfy the monotonicity assumption. 
  
\begin{align}
& K_{1,\emptyset,2}=0.1  \quad K_{2,\emptyset,\emptyset}=0.2 \label{eq:Kexample}\\
& K_{1,\emptyset,\emptyset}=0.5\quad K_{2,1,\emptyset}=0.4 \nonumber\\
& K_{1,1,2}=5  \nonumber\\
& K_{1,1,\emptyset}=6  \nonumber
\end{align}
Up to now,the construction of $K$ has depended only on the structure of an unweighted regulatory network \textbf{RN}. We now take into account the weights associated to \textbf{RN} as in Definition~\ref{def:param_reg_net}. The K-system ODE, that we describe next, will depend on the these weights.

The dynamics of variables $x_i$ are  affected  by the incoming edges to node $i$ in the regulatory network \textbf{RN}. For each  $x_j \in S(i)$,  the value of $x_j$ is either above or below the threshold $\theta_{ij}$ assigned to the edge from $j$ to $i$ in the weighted regulatory network. If $x_j \in S^+(i)$ has an activating effect, then $x_j > \theta_{ij}$ implies that $x_i$ will be produced at a  greater rate than when $x_j < \theta_{ij}$. The inequalities are swapped  for a repressing effect, $x_j \in S^-(i)$.  
With this in mind, we define the \textit{activity function} for a node $i$ and $x \in X$, as follows: 
\begin{equation}\label{eq:zeta} \zeta_i: X \to 2^{S^+(i)} \times 2^{S^-(i)} , \qquad \zeta_i(x) = (A_i,B_i)\end{equation}
\begin{align*}
A_i &= \{ j \in S^+(i)  \:|\:  x_j > \theta_{ij} \} \\
B_i &= \{ j \in S^-(i)\:|\:  x_j > \theta_{ij} \} .
\end{align*}
The map $\zeta := (\zeta_1, \ldots, \zeta_n)$, defined on $X$, is constant on each open domain of $X$.

The composition of the parameter assignment function with the activity function, $k_i \circ \zeta_i$, assigns to a vector $x \in X$  a scalar parameter  $k_i(A,B) = k_i(\zeta_i(x))$ in the set $K$.  
 Recalling the decay rates $\gamma$ from Definition~\ref{def:param_reg_net}, we are now in a position to define a  differential equation parameterized by $K$ and defined on  $X$.

\begin{defn}\label{def:Ksystem}
The system 
\begin{equation} \label{Ksystem}
\dot{x}_i = - \gamma_i x_i + k_i(\zeta_i(x))
\end{equation}
is called the {\it $K$-system} on $X$ associated to the weighted regulatory network \textbf{RN}. 
\end{defn}
Note that since $\zeta$ is constant on the open domains of $X$, the differential equation is linear in each such domain. On the boundaries of the domains, the system is undefined due to  the discontinuity in $k_i(\zeta_i(x))$. However,  we extend the system by continuity, whenever possible, from $X$ to  $\R^n_+$.
The assumption of the non-negativity of $k$ guarantees  that the non-negative orthant $\bbR^n_+$ is positively invariant and the concentrations $x_i$  remain non-negative for all $t \geq 0$.

%%%%%%%%%%%%%%
\subsubsection{State transition graph}

\begin{figure}
    \begin{minipage}{.49\textwidth}
        \centering
    \begin{tikzpicture}[scale=.8]
	\fill[fill=lightgray] (2,2) rectangle (4,4);	
	\draw[ultra thick,-] (0,4) node[anchor=east]{$x_2$} -- (0,0) -- (6,0) node[anchor=north] {$x_1$} ;
	\draw[dashed] (2,0) node [below]{$\theta_{21}$} -- ++(0,4);
	\draw[dashed] (4,0) node [below]{$\theta_{11}$} -- ++(0,4);
	\draw[dashed] (0,2) node[anchor=east]{$\theta_{12}$}-- ++(6,0);
	
\tikzset{point/.style = {circle, fill=black, inner sep=1.5pt, node contents={}}}
                
    \node (v1) at (3.4,2.7)     [point, label=above:$x$];
    \node (v2) at (.8,.4)     [point, label=above:$\frac{k\circ \zeta}{\gamma}(x) $];
\end{tikzpicture}
    \end{minipage}\hfill
    \begin{minipage}{.49\textwidth}
    \begin{tikzpicture}[ node distance = 1cm,every text node part/.style={align=left}]
\tikzset{my node/.style = {circle, draw, inner sep=1pt,minimum size=0pt}}

  \node[draw, rounded rectangle] (11) {$(1,1)$};
  \node[draw, rounded rectangle] (21) [right=of 11] {$(2,1)$};
  \node[draw, rounded rectangle] (31) [right=of 21] {$(3,1)$};
  \node[draw, rounded rectangle] (12) [above=of 11] {$(1,2)$};  
  \node[draw, rounded rectangle, fill=lightgray] (22) [right=of 12] {$(2,2)$};
  \node[draw, rounded rectangle] (32) [right=of 22] {$(3,2)$};

\tikzset{my edge/.style = {thick,->,shorten >= 3pt,shorten <= 3pt}}

  \draw[my edge, dashed] [] (22) to [] (11);
\end{tikzpicture}
    \end{minipage}
 \[
\zeta_1(x) = (\emptyset, \{2\}), \qquad \zeta_2(x) = (\{1\}, \emptyset)
\]
\[
\frac{k\circ \zeta}{\gamma}(x) = (.1,.4)
\]
\[
\kappa(.1,.4) = (1,1)
\]   
    
    \caption{Continuing the example from Figure~\ref{fig:exampleRN} we construct the state transition graph. Above left is $X$ and right is $D$. For any $x$ in the shaded domain, the value of  $\frac{k\circ \zeta}{\gamma}(x)$ is constant and located in the lower left domain. This determines the value $\Phi^K(2,2) = (1,1)$ denoted with a dashed arrow; see \eqref{eq:commutativeDiagram}.} 
    \label{fig:exampleSTG}
\end{figure}
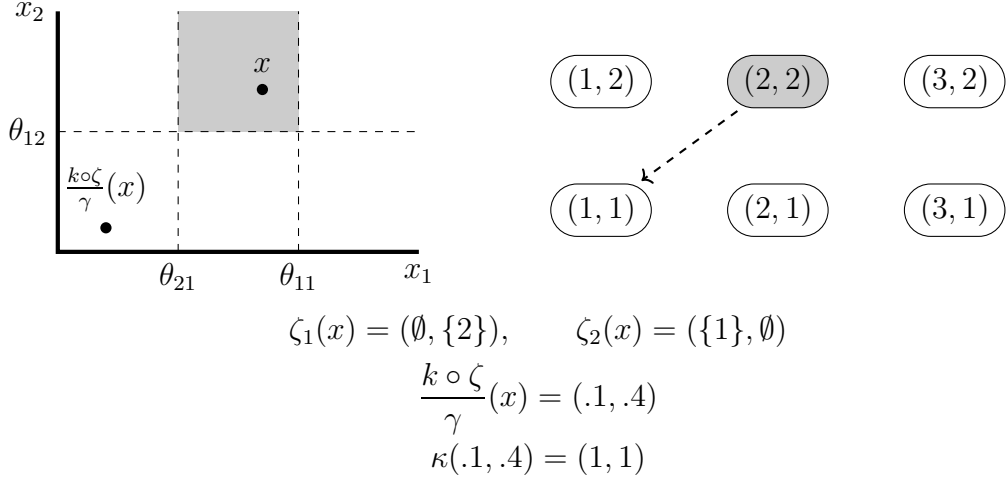

 Let  
  \[ D:= \prod_{i=1}^n \{0,1,\dots,b_i\} \]
   be a set of $n$ integer sequences that will be referred to as \textit{states}. Recall that $b_i$ is the number of targets of the node $i$ in \textbf{RN}, $b_i = |\boldsymbol \theta_i|$. We construct a function between the points of $X$ and the states in $D$. 
Let 
 $\kappa_i: X \to \{0,1,\dots,b_i\} $  be a map
\begin{equation}\label{eq:kappa} \kappa_i(x) = \ell \quad \mbox{ when }  x_i \in (\theta^{\ell}_{*i}, \theta^{\ell+1}_{*i} ) \end{equation}
where superscript $\ell$ indicates the $\ell$-th domain of the $x_i$ axis.
Collecting the maps $\kappa_i$  in a single map   we define $\kappa: X \to D$ to be the \textit{index assignment map} $\kappa = (\kappa_1, \ldots, \kappa_n)$. We say that $D$ \textit{indexes} the open domains of $X$. 
By construction, the index assignment map is constant on each domain in $X$. The map  $\kappa$ takes a vector $x \in X$ and assigns it to the state representing the domain of $X$ in which $x$ lies. 

The function 
\[ \frac{k \circ \zeta}{\gamma} = \left(\frac{k_1 \circ \zeta_1}{\gamma_1}, \ldots, \frac{k_n \circ \zeta_n}{\gamma_n}\right)\]
is a map $ \frac{k \circ \zeta}{\gamma}: X\to X$.  We define a discrete map $ \Phi^K: D \to D $ on the set of states $D$ by requiring that 
\begin{equation}\label{cd} \Phi^K \circ \kappa = \kappa \circ  \frac{k \circ \zeta}{\gamma}, \end{equation}
i.e. that the following diagram commutes
\begin{equation} \label{eq:commutativeDiagram}
  \begin{tikzcd}
   X \arrow{r}{\frac{k \circ \zeta}{\gamma} }    \arrow{d}{\kappa} & X \arrow{d}{\kappa} \\
  D  \arrow{r}{\Phi^K} & D
  \end{tikzcd}
\end{equation}

Note that the solution of (\ref{Ksystem}) with initial condition $x_0$  converges to the target point $\frac{k \circ \zeta}{\gamma} (x_0)$. The map $\Phi^K$ takes the state $d=\kappa(x_0)$ and assigns it to the state $\Phi^K(d)$ which contains the target point $\frac{k \circ \zeta}{\gamma} (x_0)$. In this way, the map $\Phi^K$ captures the behavior of solutions of (\ref{Ksystem}). 
It is important to note that the convergence of the solution starting at $x_0$  toward its  target point $\frac{k \circ \zeta}{\gamma} (x_0)$ is only valid while the solutions remain in the component of $X$ containing $x_0$; when they enter a neighboring domain, the target point will change.

To capture this behavior, we 
 define a {\it state transition graph} on states $d\in D$ that coarsely describes solutions of (\ref{Ksystem}). 
It represents the  \textit{asynchronous} update dynamics for the discrete valued function $\Phi^K$.
\begin{defn}[State transition graph]
 The \textit{state transition graph (STG)} is a directed graph with nodes $D$, where
two nodes $d, d' \in D$ are connected by a directed edge $d \to d'$, if and only if
\begin{enumerate}
\item either $d =d'$ and $\Phi^K(d) = d$; or 
\item $d$ and $d'$ differ in exactly one component, say $i$, and 
\begin{align*}
d'_i = d_i+1  &\mbox{ and } \Phi^K_i(d) >d_i, \text{ or } \\
d'_i = d_i-1  &\mbox{ and } \Phi^K_i(d) <d_i 
\end{align*}
\end{enumerate}
\end{defn}

We construct the state transition graph of our example network in Figures~\ref{fig:exampleSTG} and \ref{fig:exampleSTGpart2}.

\begin{figure}
\begin{tikzpicture}[ node distance = 1cm,every text node part/.style={align=left}]
\tikzset{my node/.style = {circle, draw, inner sep=1pt,minimum size=0pt}}

  \node[draw, rounded rectangle] (11) {$(1,1)$};
  \node[draw, rounded rectangle] (21) [right=of 11] {$(2,1)$};
  \node[draw, rounded rectangle] (31) [right=of 21] {$(3,1)$};
  \node[draw, rounded rectangle] (12) [above=of 11] {$(1,2)$};  
  \node[draw, rounded rectangle, fill=lightgray] (22) [right=of 12] {$(2,2)$};
  \node[draw, rounded rectangle] (32) [right=of 22] {$(3,2)$};

\tikzset{my edge/.style = {thick,->,shorten >= 3pt,shorten <= 3pt}}

  \draw[my edge] (22) to [] (12);
  \draw[my edge] (22) to [] (21);
  
  \draw[my edge, dashed] [] (22) to [] (11);

  \node[draw, rounded rectangle] (12') [right=3cm of 32] {$(1,2)$};  
  \node[draw, rounded rectangle] (11') [below= of 12'] {$(1,1)$};
  \node[draw, rounded rectangle] (21') [right=of 11'] {$(2,1)$};
  \node[draw, rounded rectangle] (31') [right=of 21'] {$(3,1)$};
  \node[draw, rounded rectangle] (22') [right=of 12'] {$(2,2)$};
  \node[draw, rounded rectangle] (32') [right=of 22'] {$(3,2)$};

\tikzset{my edge/.style = {thick,->,shorten >= 3pt,shorten <= 3pt}}

  \draw[my edge] [] (22') to [] (12');
  \draw[my edge] [] (22') to [] (21');
  \draw[my edge] [] (21') to [] (11');
  \draw[my edge] [] (12') to [] (11');
  \draw[my edge, loop left] [] (11') to [] (11');
  \draw[my edge] [] (32') to [] (31');
  \draw[my edge, loop right] [] (31') to [] (31');

  \coordinate[above=.5cm of 31]  (a) ;
  \coordinate[right=2cm of a]  (b) ;
  \draw[thick] (b) -- ++(0,1.3);
  \draw[thick] (b) -- ++(0,-1.3);
\end{tikzpicture}
    \caption{Left: The state transition graph is the asynchronous update dynamics, and so does not allow the diagonal transition; instead, we replace the dashed arrow with arrows capturing one-step adjacency. Right: the completed STG, where the process that was illustrated for state $(2,2)$ is repeated for each state.}
    \label{fig:exampleSTGpart2}
\end{figure}
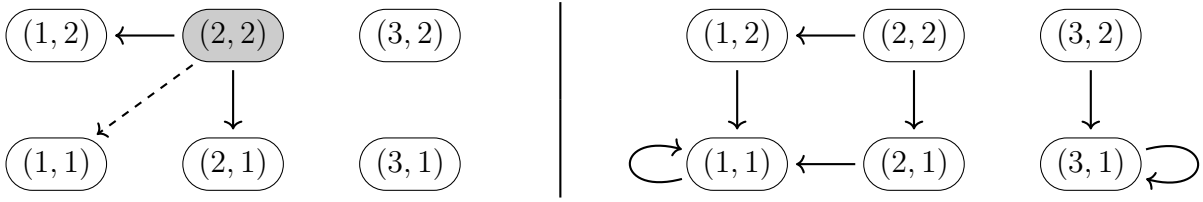

%%%%%%%%%%

The number of maps $\Phi^K$ for a given \textbf{RN} is finite. This induces an equivalence relation over all collections $K$ satisfying the monotonicity condition in Definition~\ref{def:Kmono} that are consistent with the structure of \textbf{RN}.

\begin{defn}\label{defn:equivClasses}
For a given weighted regulatory network, we define an equivalence relation on the collection of all parameter sets $K$.
We set
\[ K \cong K' \iff  \Phi^{K}(d)  =  \Phi^{K'}(d) \quad \mbox{ for all } d \in D .\]
Notice that each equivalence class $[K]$ has a component structure composed of $n$ independent equivalence classes, $[K]=([K_1], \ldots, [K_n])$, one for each node $i \in V$ in the regulatory network. This is because the monotonicity assumption in Definition~\ref{def:Kmono} applies independently to each node.
\end{defn}

\begin{figure}
    \begin{minipage}{.48\textwidth}
    \centering
    \begin{tabular}{c|cc}
$y_1y_2$                & \multicolumn{1}{c}{$g_{11}$} & $g_{21}$ \\ \hline
00 & 0                           & 0       \\
01                      & 0                           & 0       \\
10                      & 1                           & 1       \\
11                      & 1                           & 1      
\end{tabular} \quad
\begin{tabular}{c|c}
$y_1$               & $g_{12}$ \\ \hline
0                     & 0       \\
1                      & 0
\end{tabular}

\vspace{12pt}

\begin{tikzpicture}[ node distance = 1cm,every text node part/.style={align=left}]
\tikzset{my node/.style = {circle, draw, inner sep=1pt,minimum size=0pt}}

  \node[draw, rounded rectangle] (11) {$(1,1)$};
  \node[draw, rounded rectangle] (21) [right=of 11] {$(2,1)$};
  \node[draw, rounded rectangle] (31) [right=of 21] {$(3,1)$};
  \node[draw, rounded rectangle] (12) [above=of 11] {$(1,2)$};  
  \node[draw, rounded rectangle] (22) [right=of 12] {$(2,2)$};
  \node[draw, rounded rectangle] (32) [right=of 22] {$(3,2)$};

\tikzset{my edge/.style = {thick,->,shorten >= 3pt,shorten <= 3pt}}

  \draw[my edge] [] (22) to [] (12);
  \draw[my edge] [] (22) to [] (21);
  \draw[my edge] [] (21) to [] (11);
  \draw[my edge] [] (12) to [] (11);
  \draw[my edge, loop left] [] (11) to [] (11);
  \draw[my edge] [] (32) to [] (31);
  \draw[my edge, loop right] [] (31) to [] (31);
  \end{tikzpicture}
    \end{minipage}
    \hfill\vline\hfill
    \begin{minipage}{.48\textwidth}
    \centering
    \begin{tabular}{c|cc}
$y_1y_2$                & \multicolumn{1}{c}{$g_{11}$} & $g_{21}$ \\ \hline
00						& 0                           & \cellcolor{lightgray}1      \\
01                      & 0                           & 0       \\
10                      & 1                           & 1       \\
11                      & 1                           & 1      
\end{tabular} \quad
\begin{tabular}{c|c}
$y_1$               & $g_{12}$ \\ \hline
0                      & 0       \\
1                      & 0
\end{tabular}

\vspace{12pt}

\begin{tikzpicture}[ node distance = 1cm,every text node part/.style={align=left}]
\tikzset{my node/.style = {circle, draw, inner sep=1pt,minimum size=0pt}}

  \node[draw, rounded rectangle] (11) {$(1,1)$};
  \node[draw, rounded rectangle] (21) [right=of 11] {$(2,1)$};
  \node[draw, rounded rectangle] (31) [right=of 21] {$(3,1)$};
  \node[draw, rounded rectangle] (12) [above=of 11] {$(1,2)$};  
  \node[draw, rounded rectangle] (22) [right=of 12] {$(2,2)$};
  \node[draw, rounded rectangle] (32) [right=of 22] {$(3,2)$};

\tikzset{my edge/.style = {thick,->,shorten >= 3pt,shorten <= 3pt}}

  \draw[my edge] [] (22) to [] (12);
  \draw[my edge] [] (22) to [] (21);
  \draw[my edge, dashed] [] (11) to [] (21);
  \draw[my edge] [] (12) to [] (11);
  \draw[my edge, loop right, dashed] [] (21) to [] (21);
  \draw[my edge] [] (32) to [] (31);
  \draw[my edge, loop right] [] (31) to [] (31);
\end{tikzpicture}

    \end{minipage}
    \caption{Left: the collection of MBFs corresponding to our example network and choice of $K$ in~\eqref{eq:Kexample}, and the associated state transition graph. Right: A collection of MBFs adjacent (in the parameter graph) to the collection on the left. The single change is highlighted in gray. The corresponding state transition graph is also pictured; differences caused by the shaded entry are shown as dashed edges}. 
    \label{fig:examplePGadj}
\end{figure}
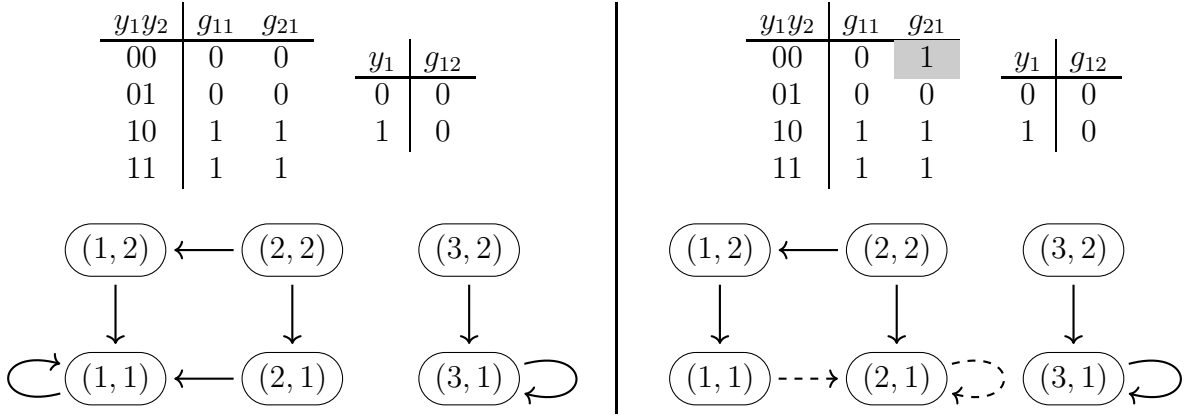

%%%%%%%%%%%
\subsection{Equivalence classes $[K]$ are collections of MBFs} \label{sec:PGBooleanFunction}
We now discuss the connection between equivalence classes $[K]$ and monotone Boolean functions. Each equivalence class $[K]$ is uniquely associated to a collection of $\prod_{i=1}^n b_i$ MBFs, where $b_i = |T(i)|$ is the number of targets of node $i$ in \textbf{RN}. We label these MBFs $g_{ki}$, one for each threshold $\theta_{ki}$ in the weighted regulatory network, and construct them below.

Let $[K]=([K_1], \ldots, [K_n])$ be an equivalence class, and consider an element $K_i \in [K_i]$, where $K_i = \{K_{i,A,B}\}_{A\subset S^+(i), B \subset S^-(i)}$ is a collection of constants for node $i$ in \textbf{RN}. 
Define a function 
\begin{align*}
\alpha_i : \B^{S(i)} \to  2^{S^+(i)} \times 2^{S^-(i)} \\
\end{align*}
where $\alpha_i(\vec y) = (A_i,B_i)$ for
\[ A_i= \{ j \in S(i) \mid y_j =1\} \cap  S^+(i), \qquad B_i= \{ j \in S(i) \mid y_j =1\} \cap  S^-(i) .\]
Here we use the standard multi-index notation $\B^{S(i)} = \{y_{j_1}y_{j_2}\dots y_{j_{m_i}} \;|\; j_k \in S(i), y_{j_k} \in \B\}$, i.e. elements of $\B^{S(i)}$ are Boolean strings of length $\abs{S(i)}$ indexed by elements of $S(i)$ in order. As an example, if $S(i)=\{2,4,5,7\}$, then $\vec y = y_2y_4y_5y_7$, where $y_i \in \B$.

Let 
$\{ \theta_{s_1 i}, \theta_{s_2 i}, \ldots \theta_{b_i i} \}$ be the $b_i$ thresholds associated to the $b_i$ targets of node $i$ in \textbf{RN}. 
With this assignment, we define positive  Boolean functions $g_{ki} : \B^{S(i)} \to \B$ as
\begin{equation*}
 g_{ki}(\vec y) = \begin{cases}
1 & \text{ when }  K_{i, \alpha_i(\vec y)} > \theta_{ki}\gamma_k \\
0 & \text{ when }  K_{i, \alpha_i(\vec y)} < \theta_{ki} \gamma_k
\end{cases}
\end{equation*}
and negative Boolean functions as
\begin{equation*}
 g_{ki}(\vec y) = \begin{cases}
0 & \text{ when }  K_{i, \alpha_i(\vec y)} > \theta_{ki}\gamma_k \\
1 & \text{ when }  K_{i, \alpha_i(\vec y)} < \theta_{ki} \gamma_k
\end{cases}
\end{equation*}

We observe that if $j \in S^+(i)$, then $g_{ki}$ will be positive in $x_j$, and if  $j \in S^-(i)$, then $g_{ki}$ will be negative in $x_j$. Therefore, any $g_{ki}$ constructed  in this way will be a monotone Boolean function  $g_{ki} \in \MBF(|S_i|)$.  As we show next, the collection 
\[G = \{ g_{ki} \;|\; i = 1,\dots,n, \; k \in T(i) \}\] 
is an equivalent representation of the equivalence class $[K]$.

\begin{prop} 
Fix a weighted regulatory network ${\bf RN}$ and thus sets $S^+(i), S^-(i)$, and $T(i)$, as well as the weights $\gamma_i$ and $\theta_{ji}$ for all $i=1,\ldots,n$, $j \in T(i)$. Then 
  \[ \Phi^K = \Phi^{K'} \iff G = G' .\]
  \end{prop}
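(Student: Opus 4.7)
The plan is to reduce both sides of the biconditional to the same collection of elementary comparisons between the parameters in $K$ and the values $\gamma_i\theta_{ki}$ (or $\gamma_k \theta_{ki}$, per the paper's convention), so that equality of the discrete maps $\Phi^K$ and $\Phi^{K'}$ is literally the same data as agreement of all the Boolean tables $g_{ki}$. The argument is mostly a careful unwinding of the three constructions $\zeta_i$, $\kappa_i$, and $\alpha_i$.

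First I would note that, by Definition~\ref{def:Ksystem} and the commutative diagram~\eqref{eq:commutativeDiagram}, the map $\Phi^K$ has independent components $\Phi^K_i$, and for a state $d \in D$ chosen together with any representative $x \in \kappa^{-1}(d)$, the value $\Phi^K_i(d)$ is determined by the location of $K_{i,A,B}/\gamma_i$ on the $x_i$-axis, where $(A,B) = \zeta_i(x)$. Since $\zeta_i(x)$ depends only on the sign of $x_j - \theta_{ij}$ for $j \in S(i)$, and each such sign is encoded by the component $d_j$, this value is well-defined on states. Concretely, if $\theta_{s_1 i} < \theta_{s_2 i} < \cdots < \theta_{s_{b_i} i}$ are the sorted thresholds on the $x_i$ axis, then $\Phi^K_i(d) = \ell$ exactly when $\theta_{s_\ell i} < K_{i,A,B}/\gamma_i < \theta_{s_{\ell+1} i}$ (with the obvious boundary cases). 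Hence $\Phi^K_i(d) = \Phi^{K'}_i(d)$ for all $d$ if and only if, for every $(A,B)\in 2^{S^+(i)}\times 2^{S^-(i)}$ realizable as $\zeta_i(x)$ and every $k \in T(i)$, the inequalities $K_{i,A,B} > \gamma_i\theta_{ki}$ and $K'_{i,A,B} > \gamma_i\theta_{ki}$ agree.

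Next I would observe two surjectivity/bijectivity facts. First, every pair $(A,B)\in 2^{S^+(i)}\times 2^{S^-(i)}$ is realized as $\zeta_i(x)$ for some $x$ (equivalently, as $\zeta_i$ of some state $d$): for each $j \in S(i)$, the threshold $\theta_{ij}$ lies in the interior of the $x_j$-axis, so one can choose the component $d_j$ to place $x_j$ either above or below $\theta_{ij}$ independently. Second, the map $\alpha_i : \B^{S(i)} \to 2^{S^+(i)} \times 2^{S^-(i)}$ is a bijection, sending $\vec y$ to $(\{j \in S^+(i) : y_j = 1\},\{j\in S^-(i):y_j=1\})$. Thus the family of inequalities appearing in the previous step, indexed by $(A,B)$, is exactly the family indexed by $\vec y \in \B^{S(i)}$, via the identification $(A,B) = \alpha_i(\vec y)$.

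Finally I would compare with the definition of $g_{ki}$. Whether $i$'s effect on $k$ is activating or repressing, $g_{ki}(\vec y)$ is determined precisely by the sign of $K_{i,\alpha_i(\vec y)} - \gamma\theta_{ki}$ (with the usual convention on the sign flip). Therefore the condition "$g_{ki}(\vec y) = g'_{ki}(\vec y)$ for all $i$, all $k\in T(i)$, and all $\vec y\in \B^{S(i)}$" is literally the same conjunction of elementary comparisons produced in step one. This yields $\Phi^K = \Phi^{K'} \iff G = G'$, proving both directions simultaneously.

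I do not expect a genuine obstacle: the content of the proof is bookkeeping, and the only small subtlety is the surjectivity of $\zeta_i$ onto $2^{S^+(i)}\times 2^{S^-(i)}$, which is immediate once one observes that the thresholds $\theta_{ij}$ lie strictly inside the $x_j$-axis and so both "above" and "below" states are always available at each source. One should also implicitly assume the generic condition $K_{i,A,B} \neq \gamma_i\theta_{ki}$ so that the comparisons defining $g_{ki}$ and $\Phi^K$ are unambiguous; this is built into the setup of $K$-systems (equality on a threshold is precisely the discontinuity set).
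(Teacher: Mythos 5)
Your proposal is correct and follows essentially the same route as the paper's proof: both reduce $\Phi^K = \Phi^{K'}$ and $G = G'$ to the identical family of threshold comparisons $K_{i,\alpha_i(\vec y)} \gtrless \gamma_i\theta_{ki}$ by unwinding $\zeta_i$, $\kappa_i$, and $\alpha_i$. Your explicit verification that $\zeta_i$ is surjective onto $2^{S^+(i)}\times 2^{S^-(i)}$ (and your flagging of the genericity assumption $K_{i,A,B}\neq\gamma_i\theta_{ki}$) fills in two small points the paper leaves implicit when it passes from equality on the image of $\zeta$ to equality of all comparisons, but this is a refinement of the same argument rather than a different one.
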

  \begin{proof}
  Consider two different collections $K,K'$ associated to parameter assignment functions $k,k'$ respectively. Note that the set of states $D$, the function $\kappa: X \to D$ and the function $\zeta : X \to 2^{S^+(i)} \times 2^{S^-(i)}$ are uniquely determined by the weighted regulatory network  ${\bf RN}$. Therefore, $\Phi^K$ and $\Phi^{K'}$ differ only in the functions $k$ and $k'$:
 \[ 
    \Phi^K \circ \kappa = \kappa \circ \frac{k \circ \zeta}{\gamma} \qquad 
    \Phi^{K'} \circ \kappa = \kappa \circ \frac{k' \circ \zeta}{\gamma}.
\]
  Therefore it follows that
  \[ \Phi^K = \Phi^{K'} \iff \kappa \circ \frac{k}{\gamma} = \kappa \circ \frac{k'}{\gamma} \iff \kappa \circ \frac{k}{\gamma} \circ \alpha = \kappa \circ \frac{k'}{\gamma}\circ \alpha .\]
   This in turn leads to the equivalencies: 
       \begin{align*}
        &\iff \kappa(K_{i,\alpha_i(\vec y)}/\gamma_i) = \kappa(K'_{i,\alpha_i(\vec y)}/\gamma_i) \; \forall i = 1,\dots,n \text{ and }\forall \vec y \in \B^{S(i)}\\
        &\iff K_{i,\alpha_i(\vec y)}/\gamma_i, K'_{i,\alpha_i(\vec y)}/\gamma_i \in (\theta_{*i}^\ell,\theta_{*i}^{\ell +1}) \text{ for some } \ell \\
        &\iff K_{i,\alpha_i(\vec y)}, K'_{i,\alpha_i(\vec y)} \in (\theta_{*i}^\ell\gamma_i,\theta_{*i}^{\ell +1}\gamma_i) \text{ for some } \ell \\
        &\iff \big(K_{i,\alpha_i(\vec y)} > \theta_{ji}\gamma_i \iff K'_{i,\alpha_i(\vec y)} > \theta_{ji}\gamma_i\big) \;\forall j \in T(i)\\
        &\iff \big(g_{ji} \in G \iff g_{ji} \in G'\big).
        \end{align*}
\end{proof}

For an example, compare the collection $K$ in~\eqref{eq:Kexample} for the weighted regulatory network in Figure~\ref{fig:exampleRN} to the equivalent collection of three monotone Boolean functions in the left panel of Figure~\ref{fig:examplePGadj}.

\begin{figure}
    \centering
    \begin{tikzpicture}[ node distance = .7cm]
\tikzset{my node/.style = {circle, draw, inner sep=1pt,minimum size=0pt}}
	\node[draw, rounded rectangle, thick] (A) {
		\begin{tikzpicture}[ node distance = .5cm,every text node part/.style={align=left}]
			\tikzset{my node/.style = {circle, draw, inner sep=2pt,minimum size=0pt}}
				\node[my node] (1) {$0$};
				\node[my node] (2) [right=of 1] {$1$};
				\draw[-] [] (1) to (2);
		\end{tikzpicture}
  };
	\node[draw, rounded rectangle, thick] (B) [right=of A] {
		\begin{tikzpicture}[ node distance = .5cm]
			\tikzset{my node/.style = {circle, draw, inner sep=2pt,minimum size=0pt}}
				\node[my node, fill=darkgray] (1) {$0$};
				\node[my node] (2) [right=of 1] {$1$};
				\draw[-] [] (1) to (2);
		\end{tikzpicture}
  };
	\node[draw, rounded rectangle, thick] (C) [right=of B] {
		\begin{tikzpicture}[ node distance = .5cm]
			\tikzset{my node/.style = {circle, draw, inner sep=2pt,minimum size=0pt}}
				\node[my node, fill=darkgray] (1) {$0$};
				\node[my node, fill=darkgray] (2) [right=of 1] {$1$};
				\draw[-] [] (1) to (2);
		\end{tikzpicture}
  };
  \draw[-, thick] [] (A) to [] (B);
  \draw[-, thick] [] (B) to [] (C);
    \node[] (D) [left=of A] {$\times$};
    \node[draw, rounded rectangle, thick] (E) [left=of D] {20 node graph};
\end{tikzpicture}
    \caption{Continuing the example, we show the parameter graph for the network in Figure~\ref{fig:exampleRN}. The parameter graph takes the form of a product graph, with one factor for each node in $\bRN$. The factor associated to node $1$, which has two inputs and two outputs in \textbf{RN}, is the 20 node graph on the left. It is isomorphic to the graph shown in Appendix~\ref{problemCubes}, Figure~\ref{fig:megaPG}. The factor associated to node $2$, which has one input and one output in \textbf{RN}, 
    is shown on right. Each parameter node in the factor on the right contains the associated monotone Boolean function, where gray shading means $g_{12} = 0$, and similarly white shading implies $g_{12} = 1$.}
    \label{fig:exampleParameterGraph}
\end{figure}
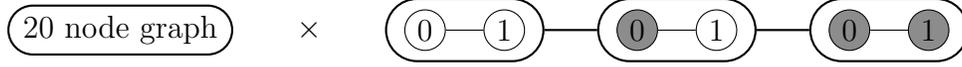

\subsection{Parameter Graph}\label{sec:PG}
The fact that we can view equivalence classes $[K]$ as collections of monotone Boolean functions allows us to organize the equivalence classes $[K]$ into a graph, called the {\it parameter graph (PG)}, where each node is associated to an equivalence class. Let $[K]$ and $[K']$ be different equivalence classes, with associated collections of MBFs $G= \{g_{ki}\}$ and $G'=\{g'_{ki}\}$. 
The nodes $[K]$ and $[K']$ will be connected by an edge, if, and only if, there is $i\in \{ 1, \ldots, n\}$, $k \in T(i)$, and $\vec y \in \B^{S(i)}$ such that  
\[g_{ki}(\vec y) \neq g_{ki}'(\vec y)\]
\[g_{ki}(\vec z) = g_{ki}'(\vec z) \quad \forall \vec z \neq \vec y\]
\[g_{\ell j} = g_{\ell j}' \text{ whenever } j \neq i \text{ or } \ell \neq k. \]
In other words, there is exactly one MBF whose value  differs on one input.
An example of a single adjacency is shown in Figure~\ref{fig:examplePGadj} and the parameter graph for our running example is shown in Figure~\ref{fig:exampleParameterGraph}.

\subsection{Representative networks}

It will be convenient to consider a subset of weighted regulatory networks \textbf{RN} with $\gamma_i = 1$ for all $i$. It turns out that the class of weighted regulatory networks with this property  exhibits the same parameter graphs  with the same collection of state transition graphs as the collection of weighted graphs with general positive decay rates $\gamma=(\gamma_1, \ldots, \gamma_n)$.

To see this, fix a set $K$ and its parameter assignment function $k$ from Definition~\ref{def:Kmono}. Consider a weighted regulatory network \textbf{RN} with the collections of sources $S^+(i)$, $S^-(i)$, targets $T(i)$, decay rates $\gamma=(\gamma_1, \ldots, \gamma_n)$, and thresholds $\{\theta_{ji}\}$. Consider a network $\widehat{\mbox{\textbf{RN}}}$ with the same collection of sources  $S^+(i)$, $S^-(i)$ and targets $T(i)$, but with all decay rates set to $\hat \gamma_i = 1$ and the thresholds set to $\{\hat \theta_{ji} = \gamma_i \theta_{ji}\}$. 

The threshold assignment induces a bijection $ x \mapsto \hat x$ with $\hat x = \gamma x$,  from $X$ to $\hat X$:
  \[\hat X = \R^n_+ \setminus \{ \hat x_i = \hat\theta_{ji} \;|\; i = 1,\dots,n, \quad j \in T(i)\}.\] 
  The key observation is that 
\begin{equation}\label{eq:obs}
    \hat x_i \in \left(\gamma_i\theta_{*i}^\ell,\gamma_i\theta_{*i}^{\ell+1}\right) \; \Leftrightarrow \; x_i = \frac{\hat x_i}{\gamma_i} \in \left(\theta_{*i}^\ell,\theta_{*i}^{\ell+1}\right).
\end{equation}
This allows us to conclude that the activity functions $\zeta$ and $\hat \zeta$ (defined in~\eqref{eq:zeta}) satisfy $\hat \zeta(\hat x) = \zeta(\hat x / \gamma) =  \zeta(x)$, which leads to
  \[  \theta_{ji} < \frac{k_i \circ \zeta}{\gamma_i} (x)< \theta_{si} \]
  for some $j,s \in T(i)$, if and only if
  \[ \hat \theta_{ji} < k_i \circ \hat \zeta (\hat x)< \hat \theta_{si} .\]
  In other words, the following diagram commutes:
\begin{equation*} 
  \begin{tikzcd}
  X \arrow{r}{\frac{k \circ \zeta}{\gamma} }    \arrow{d}{\gamma x} & X \arrow{d}{\gamma x} \\
  \hat X \arrow{r}{ k \circ \hat \zeta        } & \hat X 
  \end{tikzcd}
  \end{equation*}

Since the underlying network topology is the same between two weighted networks \textbf{RN} and $\widehat{\mbox{\textbf{RN}}}$, the discrete states of the state transition graph are the same $D = \hat D$. Using~\eqref{eq:obs} again, we conclude that the index assignment functions $\kappa$ and $\hat \kappa$ from~\eqref{eq:kappa} satisfy the following:
\[ \hat \kappa(\hat x) = \ell \; \Leftrightarrow \; \kappa(x) = \ell. \]
Recalling that $\Phi^K \circ \kappa = \kappa \circ (k \circ \zeta)/\gamma$ from~\eqref{cd}, we see that 
\[  \hat \Phi^K \circ \hat \kappa(\hat x) = \Phi^K \circ \kappa(x).\]
This means that the state transition graphs are identical under $K$ applied to \textbf{RN} and $\widehat{\mbox{\textbf{RN}}}$.

We conclude that by  considering  the restricted class of weighted regulatory networks  with $\gamma_i=1$ for all $i$ we will recover the same set of state transition graphs as the general system.  Therefore we we will  assume $\gamma_i=1$ from now on, and  we will write $g_{ki}$ as 
\begin{align} \label{eq:g}
 g_{ki}(\vec y) & = \begin{cases}
1 & \text{ when }  K_{i, \alpha_i(\vec y)} > \theta_{ki} \\
0 & \text{ when }  K_{i, \alpha_i(\vec y)} < \theta_{ki} 
\end{cases} \nonumber \\
 & \mbox{ or } \\
 g_{ki}(\vec y) &= \begin{cases}
0 & \text{ when }  K_{i, \alpha_i(\vec y)} > \theta_{ki} \\
1 & \text{ when }  K_{i, \alpha_i(\vec y)} < \theta_{ki}
\end{cases} \nonumber
.
\end{align}

%%%%%%%%%%%%%%%
\subsection{Differential equations from monotone Boolean functions}
In Definition~\ref{def:Ksystem} we associated an ordinary differential equation to a weighted regulatory network {\bf RN}. 
A more explicit way to do so is due to ~\cite{glass:kaufman:72,glass:kaufman:73,Thomas1973}.
Again consider a weighted regulatory network \textbf{RN} with nodes $i \in V$ summarizing regulatory activity for continuous variables $x_i$. 
Assume that regulation  of $x_i$ by its regulatory input  $x_j$ switches  abruptly  at the real-valued threshold $\theta_{ij}$ from \textbf{RN}, written as one of two maps
\[ \sigma^+_{ij}(x_j) = \left \{ \begin{array}{cc} 
  1 & x_j> \theta_{ij}\\
  0 & x_j < \theta_{ij}
  \end{array} \right. 
  \quad 
  \sigma^-_{ij}(x_j) = \left \{ \begin{array}{cc} 
  0 & x_j > \theta_{ij}\\
  1 & x_j < \theta_{ij}
  \end{array} \right. 
\]
  whenever $j \in S(i)$ is a source of node $i$.
  We write $\sigma_i = (\sigma_{i s_1},\dots,\sigma_{i s_{m_i}})$, where $s_j \in S(i)$, $m_i = |S(i)|$, $\sigma_{ik} = \sigma^+_{ik}$ whenever $k \in S^+(i)$, and $\sigma_{ik} = \sigma^-_{ik}$ whenever $k \in S^-(i)$. In other words, $\sigma^+$ models an activating input and $\sigma^-$ a repressing input.

  We shall again assume  that any two thresholds $\theta_{ji}$ and $\theta_{ki}$ are distinct for variable $x_i$. Also as before, these thresholds $\{\theta_{ij} \;|\; i = 1,\dots,n, j \in S(i)\}$ divide $\bbR^n_+$ into an open rectangular grid $X$. In addition, assume that for very node $i\in V $ there is an associated Boolean function $f_i: \B^{S(i)} \to \B$ that converts inputs of the node $i$ to the new state of node $i$. Then we consider the  following system of ODEs on $X$:
\begin{equation} \label{switching}
\dot{x}_i = -x_i + f_i(\sigma_i(x)) 
\end{equation}

\section{Algebraic switching systems}

The system of ODEs~\eqref{switching} has no continuous parameters. In order to introduce such parameters and allow comparison with K-systems we parameterize both the domain and the range of each function 
 $f_i : \B^n \to \B$. 
 To parameterize the domain we replace in the definition of $\sigma_{ij}$ the Boolean values $0<1$ by continuous, non-negative, real parameter values $L_{ij}<U_{ij}$.
  To capture the sign along the network edges, we again consider two types of $\sigma_{ij}$ functions
 \[ \sigma^+_{ij}(x_j) = \left \{ \begin{array}{cc} 
 U_{ij} & x_j> \theta_{ij}\\
 L_{ij} & x_j < \theta_{ij}
 \end{array} \right.  \quad  
 \sigma^-_{ij}(x_j) = \left \{ \begin{array}{cc} 
 L_{ij} & x_j> \theta_{ij}\\
 U_{ij} & x_j < \theta_{ij}
 \end{array} \right. .
 \]

We introduce the \textit{interaction function} $\Lambda_i$ as a real-valued replacement for the function $f_i$. All interaction functions will be algebraic expressions over the real numbers using addition and multiplication.
In this notation, Equation~\eqref{switching} reads
 \[
 \dot{x}_i = -x_i + \Lambda_i(\sigma_i(x)) ,
 \]
and we refer to it as a \textit{switching system}, as in \cite{Cummins16,Gedeon18}. 

For every $x \in X$, the composition $\Lambda_i(\sigma_i(x))$ assigns a real number that is a combination of numbers $\{L_{ij} \mbox{ or } U_{ij} \;|\; j \in S(i)\}$, where for each $j$ only one of $L_{ij} \mbox{ or } U_{ij}$ enters the function $\Lambda_i$.  This combination is constant on each domain in $X$.  For monotone functions $\Lambda_i$, the image of $\Lambda_i(\sigma_i(x))$ is a set $K$  that satisfies the monotonicity assumption in Definition~\ref{def:Kmono}.
Therefore the  switching system with monotone $\Lambda_i$ for all $i$ is a K-system (\ref{Ksystem}) and therefore  gives rise to  a parameter graph and to a state transition graph for each parameter node.

In this paper we consider three basic algebraic forms of functions $\Lambda_i$. The set of linear ($\Sigma$) $\Lambda$ functions is given by 
 \[
 \boldsymbol \Sigma{(n)} := \left\{\Lambda_i: \R^n_+ \to \R_+ \mid V \subseteq \{1,\dots n\}, \Lambda_i(z_1, \ldots, z_n)  =  \sum_{i \in V} z_i\right\} \ .
 \]
 The products of sums ($\Pi\Sigma$) $\Lambda$ functions are
  \[
\boldsymbol \Pi\boldsymbol\Sigma{(n)} := \left\{\Lambda_i: \R^n_+ \to \R_+ \mid  \Lambda_i(z_1, \ldots, z_n)   = \prod_{W_{k}} \left (\sum_{i \in W_{k}} z_i \right) \right\} \ ,
 \]
where the sets $W_k$ partition $S(i)$.
 The sums of products of sums ($\Sigma\Pi\Sigma$) $\Lambda$ functions are
  \begin{equation} \label{big:class}
 \boldsymbol \Sigma \boldsymbol\Pi\boldsymbol\Sigma{(n)}  := \left\{\Lambda_i: \R^n_+ \to \R_+ \mid \Lambda_i(z_1, \ldots, z_n)   =  \sum_{W_k} \left( \prod_{V_{k,j}} \left (\sum_{i \in V_{k,j}} z_i \right) \right)\right\} \ ,
 \end{equation}
where the disjoint union of $V_{k,j}$ is $W_k$. As before, the sets $W_k$ partition $S(i)$, and the sets $V_{k,j} $ partition the set $W_k$. 
Observe that these classes of functions contain progressively more functions, i.e.\ for $n\geq 3$,
 \[ \boldsymbol \Sigma{(n)} \subsetneq \boldsymbol \Pi\boldsymbol\Sigma{(n)}  \subsetneq \boldsymbol \Sigma \boldsymbol\Pi\boldsymbol\Sigma{(n)} .\] 

  The restriction of the class of functions $\Lambda$ to a product of sums ($\Pi\Sigma$)  goes back to at least Snoussi~\cite{Snoussi89}, and  was used extensively in the development of DSGRN (Dynamic Signatures Generated by Regulatory Networks)~\cite{Cummins16,Gedeon18,Cummins2017b,Kahrl18,EMT}.   The main contribution of the DSGRN approach is the definition and explicit construction of a parameter graph (Section \ref{sec:PG}) in terms of inequalities in the input combinations of $\{L_{ij}, U_{ij} \;|\; j \in S(i) \}$ and thresholds  $\{\theta_{k i} \; |\; k \in T(i)\}$ given a collection of  nonlinearities in $\Pi \Sigma$.

 As we have shown in Section~\ref{sec:PGBooleanFunction}, each component $[K_i]$ of the parameter node $[K] = ([K_1], \ldots, [K_n])$ is equivalent to a collection of monotone Boolean functions, one for each edge in the regulatory network. When a node $i$ has a single target, then there is only one MBF associated to node $i$, namely $g_{ji}$, where $j$ is the sole target of $i$. In the case of multiple targets, $|T(i)| > 1$, there is a collection of $|T(i)|$ MBFs for node $i$.
We consider a single component $[K_i]$, first where node $i$ has a single target and second where node $i$ has more than one target. We ask which such singletons or collections of monotone Boolean functions can be associated to a parameter node in the parameter graphs of $K$-, $\Sigma$-, $\Pi \Sigma$- and $\Sigma \Pi \Sigma$-systems.

 \begin{defn}\label{A:realizable}
 
We say that a  monotone Boolean function $h:\B^n \to \B$ is \emph{$\ast$-realizable}, where $\ast$ can stand for $K$, $\Sigma$, $\Pi \Sigma$ or $  \Sigma \Pi \Sigma$, if there exists a regulatory network \textbf{RN} with a node $j$ with a single target $\ell$ and weight $\theta_{\ell j}$ and a parameter node $[K]=([K_1], \ldots, [K_n])$ for the $\ast$-system, such that the Boolean map $g_{\ell j}$ that corresponds to  $[K_j]$ is $h$.

We say that a $k$-tuple of monotone Boolean functions $h_1, h_2,\dots, h_k: \B^n \to \B$ is  \emph{$\ast$-jointly realizable} if there exists a regulatory network \textbf{RN} with a node $j$ with $k$ targets $\ell_1, \dots , \ell_k$ and weights $\{ \theta_{\ell_i j} \}$ and a parameter node $[K]=([K_1], \ldots, [K_n])$ for the $\ast$-system such that the collection of Boolean maps $g_{\ell_1j}, g_{\ell_2j}, \ldots, g_{\ell_kj}$ that corresponds to  $[K_j]$   are the maps 
$h_1, h_2,\dots, h_k$ respectively. 

\end{defn}

 \begin{rem} \label{rem:realizabilitySpecialCases}
  We note that $\Sigma$-realizability is a special case of $\Pi\Sigma$-realizability, which is in turn a special case of $\Sigma\Pi\Sigma$-realizability, which is in turn a special case of $K$-realizability. These observations rely on the fact that $\boldsymbol \Sigma \subset \boldsymbol \Pi\boldsymbol\Sigma  \subset \boldsymbol \Sigma \boldsymbol\Pi\boldsymbol\Sigma$ and that the images of any monotone $\Lambda$ functions give rise to $K$-systems.
  \end{rem}
 
 The definition of realizability~\ref{A:realizable}  uses arbitrary functions $h \in \MBF$. In some cases, it will be convenient to assume $g_{ki} \in \MBF^+(m_i)$, instead of the weaker condition $g_{ki} \in \MBF(m_i)$, where recall $m_i = |S(i)|$ is the number of sources of $i$. This is achieved via a coordinate change. For each $j \in S(i)$, define the function $\beta_j^{(i)} : \B \to \B$ as
 \begin{equation}\label{eq:beta}
 \beta_j^{(i)} (b) = \begin{cases}
 b &\text{if } j\in S^+(i) \\ 
 1-b &\text{if } j\in S^-(i)
 \end{cases}
 \end{equation}
 Then define
 \[
 \beta^{(i)} : \B^{m_i} \to \B^{m_i}, \quad \beta^{(i)} = (\beta_1^{(i)}, \ldots, \beta_{m_i}^{(i)} )
 \]
 component-wise. We observe that $g_{ki} \circ \beta^{(i)} \in \MBF^+(m_i)$ and that  $\beta^{(i)}$ is an involution i.e. $\beta^{(i)}\circ \beta^{(i)} = \mathrm{Id}$. We will use the notation $\beta$ for a function where we do not specify the network node identity $i$.  

 Using the coordinate change $\beta$, we can see that $h:\B^{m_i} \to \B$ is $\ast$-realizable if and only if, $f:=h\circ \beta \in \MBF^+(m_i)$ is a positive Boolean function and is also $\ast$-realizable, via the collection $K'$ defined as 
 \[
 K'_{i,A', \emptyset} = K_{i,A,B}
 \]
 where
 \[
 A' = A \cup B,
 \]
and network $\bRN'$ which is the same as $\bRN$ except that all edges are now activating. Likewise, if $h_1, h_2,\dots, h_k: \B^n \to \B$ is  $\ast$-jointly realizable if, and only if,  $f_1= h_1\circ\beta^{(1)}, \dots, f_k:=h_k \circ  \beta^{(k)}\in \MBF^+(n)$ are positive Boolean functions and are also $\ast$-jointly realizable.   Therefore it is sufficient to consider in proofs only  positive Boolean functions. 

  The central question that we pose  in this paper is to ask how much restriction the algebraic forms $\Sigma$, $\Pi \Sigma$, and $  \Sigma \Pi \Sigma$ impose on the richness of the potential dynamics of the switching system. 
 We will interpret the number of  $k$-tuples of MBFs that can be represented in the parameter graph  as the richness of that particular class of switching systems. 
 This question generalizes and extends a classical problem of determining when a monotone Boolean function is a threshold function. 
 
\begin{defn}\label{def:threshold}\cite{CH11}
A Boolean function $f: \B^n \to \B$, is called a \textit{threshold function} (or a \textit{linearly separable function}) if there exist real numbers $a_1,\dots,a_n \in \R$ and a threshold $\theta \in \bbR$ such that for all $\vec y = (y_1,\dots,y_n) \in \B^n$,
\begin{equation}
f(\vec x) = \begin{cases} 1 & \text{if }  \sum_{j=1}^n a_jy_j > \theta \\
0 & \text{otherwise}
\end{cases}.
\end{equation}
The $(n+1)$-tuple $(a_1,a_2,\dots,a_n,\theta)$ is called a \textit{(separating) structure} of $f$.
\end{defn}

As we will see later in Lemma~\ref{lem:SigmaIsThreshold}, any monotone Boolean function $f$ is a threshold function if and only if $f$ is $\Sigma$-realizable, i.e. representable in the parameter graph of a $\Sigma$ system. Framed in terms of threshold functions, determining which $f$ functions are $\Sigma$-realizable is then equivalent to determining which MBFs are indeed threshold functions. 

This is a classical problem in the Boolean literature. Paull \cite{paull1960boolean} showed that monotonicity is a necessary condition for a Boolean function to be a threshold function. As shown by Chow \cite{chow1961boolean} and Elgot \cite{elgot1961truth}, a Boolean function is a threshold function if, and only if,  it is \textit{assumable}, where assumable was defined by Winder \cite{winder1962threshold}. An algorithm for determining whether a Boolean function is a threshold function was given by Peled and Simeone \cite{peled1985polynomial}. Their  algorithm will produce $a_1,\dots,a_n$ in the sense of Definition~\ref{def:threshold} if the Boolean function is indeed a threshold function. An algorithm for producing additional linearly separable Boolean functions and further characterization of threshold functions was given in Rao and Zhang \cite{rao2016characterization}. The number of threshold functions for $n\leq 8$ was found in \cite{muroga1970enumeration}, and extended to $n=9$ by Gruzling \cite{GruzlingN}.

Note that $f$ is a threshold function if the set of points in $\B^n \subset \bbR^n$ at which $f$ attains value $1$ is linearly separable from those points where $f$ attains value $0$.  Following this connection, Pantovic et al. \cite{pantovic2014number}, Zunic \cite{zunic2004encoding}, and Wang and Williams \cite{wang1991threshold} all examined partitions of sets of points with surfaces that are not necessarily hyperplanes. This is intimately related to the questions of $\Pi\Sigma$- and $\Sigma\Pi\Sigma$-realizability, i.e. which monotone Boolean functions are representable in a parameter graph of a $\Pi\Sigma$- vs.  $\Sigma\Pi\Sigma$- systems.

The parameter graph node $[K]=([K_1], \ldots, [K_n])$ represents $n$ collections of monotone Boolean functions, where each  $[K_j]$ corresponds to $b_j = |T(j)|$ MBFs, the number of targets of $j$ in \textbf{RN}. Not every collection 
of $b_j$ monotone Boolean functions is $*$-jointly realizable for the algebraic classes we consider. We introduce the idea of considering multiple Boolean functions simultaneously and asking which of them are  *-jointly realizable, i.e. realizable in a node in the parameter graph of a $K$, $\Sigma$, $\Pi\Sigma$ or $\Sigma\Pi\Sigma$ system.

\section{Realizability Results}

\begin{table}[htp]
\centering
\begin{tabular}{|c|c c c c c c c|} \hline
$n$ & \multicolumn{7}{c |}{$\ast$-Joint Realizability of $f\prec g$} \\ \hline
1 & $\Sigma$ & $=$ & $\Pi\Sigma$ & $=$ & $\Sigma\Pi\Sigma$ & $=$ & K \\
2 & $\Sigma$ & $=$ & $\Pi\Sigma$ & $=$ & $\Sigma\Pi\Sigma$ & $=$ & K \\ 
3 & $\Sigma$ & $\subsetneq$ & $\Pi\Sigma$ & $\subsetneq$ & $\Sigma\Pi\Sigma$ & $=$ & K \\
$\geq$ 4 & $\Sigma$ & $\subsetneq$ & $\Pi\Sigma$ & $\subsetneq$ & $\Sigma\Pi\Sigma$ & $\subsetneq$ & K \\ \hline
\end{tabular}
\caption{Summary of the results. The $n$ is the number of inputs for each of the pair of MBF, where $f \prec g$ means that the truth set of $f$ is a subset of truth set of $g$ (Definition~\ref{def:prec}). The (in)equalities express realizability relations among categories of functions (see text).  Row 1 and 2 are a result of Proposition~\ref{first:two}. 
Row 3 is a result of Subsections~\ref{subsection:SigmaSubsetPiSigma}, \ref{subsection:PiSigmaSubsetSigmaPiSigma}, and \ref{SigmaPiSigmaEqualsK}. Row 4 is a result of Subsection~\ref{subsection:SigmaPiSigmaSubsetK}.} \label{tableOfResults}
\end{table}

The main results  are summarized in Table~\ref{tableOfResults}, where we consider pairs of $*$-jointly realizable MBFs. 
The equality sign between two categories indicates that whenever a pair of MBFs  with given set of inputs $n$ (row index) is realizable in one category, it is also realizable in the other category. The strict subset relationship shows that any pair of functions realizable in the smaller category is also realizable in the larger category, and, furthermore,  there is a pair of Boolean functions $f \prec g$ that is realizable in the larger category that is not realizable in the smaller category.

\subsection{$K$-realizability}

The main goal of this section is to prove  the following two results: (1) any monotone Boolean function is $K$-realizable, and (2) any $k$-tuple $f_1\prec\ldots \prec f_k$ of MBFs is $K$-jointly realizable. Therefore $K$-systems are general enough to represent  any collection of monotone Boolean functions.  

\begin{defn}
For a Boolean function $f: \B^n \to \B$, we define the \textit{truth set} of $f$ as 
\begin{equation*}
\Truth(f) := \{\vec y \in \B^n \mid f(\vec y) = 1\}.
\end{equation*}
Similarly we define the \textit{false set} of $f$ as 
\begin{equation*}
\False(f) := \{\vec y \in \B^n \mid f(\vec y) = 0\}.
\end{equation*}
For $U \subseteq \B^n$ any subset, we will also use the notation 
\begin{equation*}
\Truth(f)|_U := \{\vec y \in U \mid f(\vec y) = 1 \}, \quad 
\False(f)|_U := \{\vec y \in U \mid f(\vec y) = 0 \}.
\end{equation*}
\end{defn}

We start our discussion of $K$-realizability and $K$-joint realizability by proving two results relating $K$-realizability to the existence of what we call a \textit{realizing function}.

\begin{defn}\label{def:Rmono}
    The \textit{positive monotonicity assumption} for a function $\cR: \B^n \to \R_+$ is the following: for all $j\in\{1,\dots,n\}$, for all $\vec y \in \B^n$ with $y_j = 0$  
\[
  \cR(\vec y) \leq \cR(\vec y + \hat e_j) .
  \]
  \end{defn}

\begin{thm}\label{thm:K}
  ~\\
\begin{enumerate}
\item $f\in \MBF^+(n)$ is $K$-realizable if, and only if, there exist
\begin{itemize}
    \item a weighted regulatory network \textbf{RN} with a node $i$ with only one target $j$ and a weight $\theta_{ji}$ and 
    \item a function $\cR^{(i)}$ such that $\cR^{(i)}: \B^n \to \R_+$ satisfies the positive monotonicity assumption and
\[
f(\vec y) = \begin{cases}
1 & \text{ if } \cR^{(i)}(\vec y) > \theta_{ji} \\
0 & \text{ if } \cR^{(i)}(\vec y) < \theta_{ji}
\end{cases} \, .
\]
\end{itemize}

\item A $k$-tuple of MBFs $f_1, f_2,\dots, f_k\in \MBF^+(n)$ is $K$-jointly realizable if, and only if, there exist 
\begin{itemize}
    \item a weighted regulatory network \textbf{RN} with a node $i$ with $k$ targets $\ell_1,\dots,\ell_k$ and weights $\{\theta_{\ell_ji}\}$ and
    \item a function $\cR^{(i)}$ such that $\cR^{(i)}: \B^n \to \R_+$ satisfies the positive monotonicity assumption and for all $j \in \{1,\dots,k\}$, $f_j$ can be expressed as
\begin{equation} \label{def2:K}
f_j(\vec y) = \begin{cases}
1 & \text{ if } \cR^{(i)}(\vec y) > \theta_{\ell_ji} \\
0 & \text{ if } \cR^{(i)}(\vec y) < \theta_{\ell_ji}
\end{cases} \, .
\end{equation}
\end{itemize}

\end{enumerate}
\end{thm}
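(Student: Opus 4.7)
The plan is to unpack the definitions carefully: the component $[K_i]$ of a parameter node is effectively a map from pairs of subsets $(A,B)$ to $\R_+$ satisfying the monotonicity condition of Definition~\ref{def:Kmono}, and after a coordinate change restricting to positive inputs, this becomes precisely a function $\cR^{(i)} \colon \B^n \to \R_+$ satisfying the positive monotonicity assumption of Definition~\ref{def:Rmono}. I would first reduce to the case $S^-(i) = \emptyset$: since $f_1,\dots,f_k \in \MBF^+(n)$, the involution $\beta^{(i)}$ from \eqref{eq:beta} lets us replace any $K$-realization by one over an \textbf{RN} in which all edges into node $i$ are activating. Under this reduction, $\alpha_i(\vec y) = (A_{\vec y},\emptyset)$ with $A_{\vec y} = \{j \in S(i) : y_j = 1\}$, so subsets $A \subseteq S^+(i)$ correspond bijectively to elements of $\B^{S(i)} \cong \B^n$ via indicator vectors $\chi_A$.

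For the forward direction of both (1) and (2), given a $K$-(joint) realization by an \textbf{RN}, node $i$, targets $\ell_1,\dots,\ell_k$, thresholds $\{\theta_{\ell_s i}\}$, and parameter node $[K]$, I would simply define
\[
  \cR^{(i)}(\vec y) \;:=\; K_{i, A_{\vec y}, \emptyset}.
\]
The monotonicity assumption of Definition~\ref{def:Kmono} restricted to $B = \emptyset$ immediately implies positive monotonicity of $\cR^{(i)}$ (specialize to $A' = A \cup \{j\}$ for $j \notin A$), and the representation $f_s(\vec y) = g_{\ell_s i}(\vec y)$ with the required threshold comparisons is just equation~\eqref{eq:g}.

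For the backward direction, given $\cR^{(i)}$ and an \textbf{RN} as described, I would set $K_{i, A, \emptyset} := \cR^{(i)}(\chi_A)$ for each $A \subseteq S^+(i)$ and assign any monotone collection $K_m$ at the remaining nodes $m \neq i$ (e.g.\ constants, which trivially satisfy Definition~\ref{def:Kmono}). Verifying that this $K_i$ satisfies the monotonicity assumption reduces to observing that positive monotonicity between adjacent hypercube vertices extends by transitivity to the required inequality for all comparable $A \subsetneq A' \subseteq S^+(i)$. Then \eqref{eq:g} yields $g_{\ell_s i} = f_s$ for every $s$, producing the desired (joint) $K$-realization; part~(1) is just the case $k=1$.

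The argument is essentially definitional, and there is no serious obstacle. The one point that requires a sentence of justification is the translation between the ``global'' monotonicity of Definition~\ref{def:Kmono} (over all comparable pairs $A \subsetneq A'$) and the ``local'' positive monotonicity of Definition~\ref{def:Rmono} (over adjacent hypercube vertices), but these are equivalent by a one-line transitivity argument. Observe also that (2) is no harder than (1): because $\cR^{(i)}$ encodes $[K_i]$ as a whole and not any individual target, it simultaneously represents all $k$ maps $g_{\ell_s i}$ via the $k$ thresholds, so jointness introduces no new difficulty beyond the single-function case.
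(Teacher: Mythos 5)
Your proposal is correct and follows essentially the same route as the paper, which proves the theorem by the single identification $\cR^{(i)}(\vec y) \equiv K_{i,\alpha_i(\vec y)}$ together with Definition~\ref{A:realizable}, Equation~\eqref{eq:g}, and the observation that positive monotonicity of $\cR^{(i)}$ corresponds to the monotonicity condition of Definition~\ref{def:Kmono} (with $S(i)=S^+(i)$ since the $f_j$ are positive). The details you add---the transitivity argument linking local and global monotonicity, and assigning arbitrary monotone constants at the remaining nodes---are points the paper leaves implicit, not a different approach.
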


\begin{proof}
Using Definition~\ref{A:realizable}, Equation~\eqref{eq:g}, and setting 
\[\cR^{(i)}(\vec y) \equiv K_{i,\alpha_i(\vec y)}, \quad \vec y \in \B^{n},\]
 the theorem follows. It  remains to note that the positive monotonicity assumption on $\cR^{(i)}$ induces the (positive) monotonicity condition on $K_i$ from Definition~\ref{def:Kmono}. Likewise, when $f_1,\dots,f_k \in \MBF^+(n)$, then $S(i) = S^+(i)$ and $K_i$ satisfying the monotonicity condition in Definition~\ref{def:Kmono} implies that $\cR^{(i)}$ must satisfy the positive monotonicity condition.
\end{proof}

\begin{defn}
  If Theorem~\ref{thm:K} is satisfied, then the pair ($\cR^{(i)}$,\textbf{RN}) is called a \emph{realizing function} and \emph{realizing network} for $f_1,\dots,f_k$, respectively. 
\end{defn}

Theorem~\ref{thm:K} shows that  $K$-systems can be thought of  as arising from monotone Boolean functions via realizing functions $\cR^{(i)}$, one for each node $i$ in a realizing network \textbf{RN}. In the following, we will restrict our focus to a single node in {\bf RN}  and drop the superscript.

\begin{defn} \label{def:prec}
  For two Boolean functions $f, g: \B^n \to \B$, we say $f$ \textit{implies} $g$ and write $f \prec g$ if, and only if, $\Truth(f) \subseteq \Truth(g)$.
  \end{defn}
    
Now we prove the main result of this section, namely that all  $k$-tuples of MBFs that are linearly ordered $f_1 \prec f_2 \prec \ldots \prec f_k$  are $K$-(jointly) realizable for all $k\geq 1$.

\begin{thm}\label{thm:KCompatible}
  ~
  \begin{enumerate}
    \item $f\in \MBF^+(n)$ if and only if $f$ is $K$-realizable.
  \item A collection $f_1, \ldots, f_b\in \MBF^+(n)$ of monotone Boolean functions has a linear order $f_1 \prec f_2 \prec \ldots \prec f_b$ if and only if it is  $K$-jointly realizable.
  \end{enumerate}
  \end{thm}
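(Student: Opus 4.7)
The plan is to use Theorem~\ref{thm:K} to translate both statements into the question of whether a suitable realizing function $\cR$ with appropriate thresholds exists, and then to construct such a function explicitly from the combinatorial data of the chain. Part~(1) is the $b=1$ case of part~(2), so the real content lies in part~(2).

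For the forward direction of part~(2)---that a chain $f_1 \prec f_2 \prec \cdots \prec f_b$ of positive MBFs is $K$-jointly realizable---I would take the \emph{rank function}
\[
\cR(\vec y) := \abs{\{\, j \in \{1,\dots,b\} : f_j(\vec y) = 1 \,\}} \in \{0, 1, \dots, b\} \subset \R_+.
\]
Positive monotonicity of $\cR$ is immediate from positive monotonicity of each $f_j$, since raising a coordinate of $\vec y$ can only add indices to the counted set. Using the chain hypothesis $\Truth(f_1) \subseteq \cdots \subseteq \Truth(f_b)$, one checks that $\vec y \in \Truth(f_j)$ iff $f_k(\vec y) = 1$ for every $k \geq j$, so $f_j(\vec y) = 1 \iff \cR(\vec y) \geq b - j + 1$. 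Choosing \textbf{RN} to have a node $i$ with $n$ positive sources and $b$ targets $\ell_1, \dots, \ell_b$, and setting weights $\theta_{\ell_j i} := b - j + \tfrac{1}{2}$ (which are automatically distinct), the hypothesis of Theorem~\ref{thm:K}(2) is satisfied.

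For the converse direction of part~(2), suppose $(\cR, \mathbf{RN})$ jointly realizes $f_1,\dots,f_b$. Since the thresholds $\theta_{\ell_j i}$ at a single node are distinct by Definition~\ref{def:param_reg_net}, I can reindex so that $\theta_{\ell_1 i} > \theta_{\ell_2 i} > \cdots > \theta_{\ell_b i}$. Then $\cR(\vec y) > \theta_{\ell_j i}$ forces $\cR(\vec y) > \theta_{\ell_k i}$ for every $k \geq j$, so $\Truth(f_j) \subseteq \Truth(f_{j+1})$ for each $j$, giving the required linear order after relabeling.

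Part~(1) is the $b=1$ specialization: given $f \in \MBF^+(n)$, take $\cR = f$ viewed as a map $\B^n \to \{0,1\} \subset \R_+$ with threshold $\tfrac{1}{2}$; conversely, positive monotonicity of $\cR$ combined with the threshold equivalence $f(\vec y) = 1 \iff \cR(\vec y) > \theta$ forces $f$ to be positive in each coordinate. The only step requiring a genuine choice is the rank function used in part~(2); once that candidate is identified the rest is bookkeeping, so I do not anticipate a significant obstacle.
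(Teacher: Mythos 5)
Your proposal is correct and matches the paper's proof essentially verbatim: your rank function $\cR(\vec y) = \abs{\{j : f_j(\vec y)=1\}}$ is exactly the paper's $\cR(\vec y) = \sum_{j=1}^b f_j(\vec y)$, with the same thresholds $\theta_j = b-j+\tfrac{1}{2}$, and your converse via ordering the thresholds and noting $(\theta_{\ell_j i},\infty) \subseteq (\theta_{\ell_k i},\infty)$ is the paper's $(\Leftarrow)$ argument. The paper likewise reduces part~(1) to the $b=1$ case of part~(2), so there is no substantive difference.
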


  \begin{proof}
    Since realizability is a special case of joint realizability and since a single MBF trivially has an order, it is sufficient to prove the second point.
    
    $(\Rightarrow)$ Let  $\cR(\vec y) := \sum_{j=1}^b f_j(\vec y)$. $\cR$ satisfies the positive monotonicity assumption of Definition~\ref{def:Rmono}, since if $y_i = 0$ for some $\vec y$, then $\cR(\vec y) \leq \cR(\vec y + \hat e_i)$ by summation and the positivity of $f_j$. Now  for  each $j \in \{1,\dots,b\}$, let $\theta_j = b - j + \frac{1}{2}$. Suppose $\vec y \in \Truth(f_j)$. Then $\vec y \in \Truth(f_k)$ for $k = j,\dots,b$, since $\Truth(f_j) \subseteq \Truth(f_k)$ by the $\prec$ relationship. So $\cR(\vec y) = b-j+1$ and we have $\theta_j = b - j + 1/2 < \cR(\vec y)$ as desired. Then the  function $\cR$ and thresholds $\theta_1, \ldots, \theta_b$ satisfy the assumptions of Theorem~\ref{thm:K}. 

    $(\Leftarrow)$ 
    Given the $b$ thresholds, establish the indexing $\theta_{b} < \dots < \theta_{1}$ using the order of $\R$. Then for any $i < j$, we have $\theta_{i} > \theta_{j}$ and $(\theta_{i}, \infty) \subseteq (\theta_{j}, \infty)$. Given the realizing function $\cR$, define a collection of $b$ positive monotone Boolean functions by
    \[ \Truth(f_{i}) =  \cR^{-1}(\theta_{i}, \infty). \]
    Then by construction 
    \[
    f_i(\vec y) = \begin{cases}
1 & \text{ if } \cR(\vec y) > \theta_i \\
0 & \text{ if } \cR(\vec y) < \theta_i
\end{cases} \, .
    \]
    Moreover, if $i < j$, we have $\Truth(f_{i}) \subset \Truth(f_{j})$, implying $f_{i} \prec f_{j}$.
  \end{proof}

%%%%%%%%%%%%%%%%%%%%%%%%%%%%%%%%%%
\subsection{$\ast$-Realizability}\label{ast-Realizability}
In this section we discuss technical points needed later for  $\Sigma$, $\Pi\Sigma$, and $\Sigma\Pi\Sigma$ realizability.

Since $\Sigma$, $\Pi\Sigma$, and $\Sigma\Pi\Sigma$ realizability are based on $\Lambda$ functions, $\Lambda : \R^n_+ \to \R_+$, we need to consider a restricted class of realizing functions of the form 
\[ \cR := \Lambda \circ \phi, \]
 where $\phi : \B^n \to \R^n_+$ component-wise monotonically encodes a Boolean vector into a real valued vector, i.e.\
$\phi = (\phi_1,\dots,\phi_n)$, where $\phi_i:\B\to\R_+$, $\phi_i(0)<\phi_i(1)$ and $\Lambda$ is an algebraic function that belongs to one of the classes
$ \boldsymbol \Sigma{(n)} \subsetneq \boldsymbol \Pi\boldsymbol\Sigma{(n)}  \subsetneq \boldsymbol \Sigma \boldsymbol\Pi\boldsymbol\Sigma{(n)}$.

The following  Lemma is a direct consequence of Theorem~\ref{thm:K}, the definition of the classes of algebraic functions $ \boldsymbol \Sigma{(n)} $,  $\boldsymbol \Pi\boldsymbol\Sigma{(n)}  $ and $ \boldsymbol \Sigma \boldsymbol\Pi\boldsymbol\Sigma{(n)}$, and the  previously made observation that all switching systems are $K$-systems. 
 
\begin{lem}\label{lem:realizable}
In the following, $\ast$ could be $\Sigma$, $\Pi\Sigma$, or $\Sigma\Pi\Sigma$. A function $f \in $ \textbf{MBF}$^+$(n) is \textit{$\ast$-realizable} if, and only if, there exist a realizing network \textbf{RN} and realizing function $\cR = \Lambda \circ \phi$, where
\begin{enumerate}
    \item the $*$-interaction function $\Lambda: \R^n\to \R$ belongs to the class  $\Lambda \in  \boldsymbol \Sigma \boldsymbol\Pi\boldsymbol\Sigma{(n)}$, $\Lambda \in  \boldsymbol\Pi\boldsymbol\Sigma{(n)}$, or $\Lambda \in  \boldsymbol\Sigma{(n)}$  if $\ast = \Sigma\Pi\Sigma$, $\ast = \Pi\Sigma$, or $\ast = \Sigma$ respectively; and
    \item for each $i \in \{1,\dots,n\}$, the function $\phi_i : \B \to \R_+$ satisfies $\phi_i(0) < \phi_i(1)$.
\end{enumerate}

Similarly, a $k$-tuple of MBFs $f_1, f_2,\dots, f_k \in \MBF^+(n)$ is \emph{$\ast$-jointly realizable} if and only if there exist a realizing network \textbf{RN} and a realizing function $\cR=\Lambda \circ \phi$ for $\Lambda$ a $*$-interaction function from (1) and $\phi$ a map satisfying (2).
\end{lem}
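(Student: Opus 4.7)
The plan is to apply Theorem~\ref{thm:K} together with the observation (already made in the paragraph preceding equation~\eqref{big:class}) that every switching system with monotone $\Lambda_i$ is a $K$-system. Thus $\ast$-realizability is a specialization of $K$-realizability, and the task reduces to identifying which realizing functions $\cR^{(i)}$ arise when the $K$ values come from evaluating $\Lambda \circ \sigma_i$.

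First I would prove the forward direction. Suppose $f$ is $\ast$-realizable, witnessed by a regulatory network \textbf{RN} with node $i$, single target $j$, threshold $\theta_{ji}$, and a parameter node $[K]$ obtained from a switching system with interaction function $\Lambda$ in the prescribed algebraic class. Because $f \in \MBF^+(n)$ we have $S(i) = S^+(i)$, so $\sigma_i$ uses only $\sigma^+_{ij'}$ factors, each taking the value $L_{ij'}$ below threshold and $U_{ij'}$ above. Define $\phi_{j'}: \B \to \R_+$ by $\phi_{j'}(0) = L_{ij'}$ and $\phi_{j'}(1) = U_{ij'}$; since $L_{ij'} < U_{ij'}$ by assumption, property (2) holds. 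For any $\vec y \in \B^{S(i)}$ the constant value of $\Lambda_i \circ \sigma_i$ on the open domain indexed by $\alpha_i(\vec y)$ equals $\Lambda(\phi(\vec y))$, which is exactly $K_{i,\alpha_i(\vec y)}$. Thus the realizing function from Theorem~\ref{thm:K} factors as $\cR^{(i)} = \Lambda \circ \phi$ with $\Lambda$ in the required class.

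For the converse, given $\Lambda$ of the prescribed algebraic form and $\phi$ with $\phi_{j'}(0) < \phi_{j'}(1)$, I would construct a switching system by setting $L_{ij'} := \phi_{j'}(0)$ and $U_{ij'} := \phi_{j'}(1)$ on each incoming edge. The function $\cR^{(i)} := \Lambda \circ \phi$ satisfies the positive monotonicity assumption of Definition~\ref{def:Rmono} because $\Lambda$, built only from sums and products of non-negative variables, is componentwise non-decreasing; flipping $y_{j'}$ from $0$ to $1$ can only increase $\cR^{(i)}(\vec y)$. Applying Theorem~\ref{thm:K} yields the needed realization of $f$, and by construction the associated $K$ values are precisely those produced by the $\ast$-switching system, so the parameter node $[K]$ of the switching system witnesses $\ast$-realizability.

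The joint-realizability statement follows by the identical argument, simply passing to part~(2) of Theorem~\ref{thm:K}: the same $\cR^{(i)} = \Lambda \circ \phi$ serves as a common realizing function for the $k$ thresholds $\{\theta_{\ell_j i}\}$. The only substantive point to verify anywhere in the argument is the componentwise monotonicity of $\Lambda \circ \phi$, which I expect to be the ``main obstacle'' only in the trivial sense that one must notice that the classes $\boldsymbol\Sigma$, $\boldsymbol\Pi\boldsymbol\Sigma$, and $\boldsymbol\Sigma\boldsymbol\Pi\boldsymbol\Sigma$ consist of polynomials with non-negative coefficients in variables ranging over $\R_+$, hence are monotone in each input; once this is observed, both directions follow directly from Theorem~\ref{thm:K}.
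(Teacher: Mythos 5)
Your proposal is correct and takes essentially the same route as the paper: the paper dispatches Lemma~\ref{lem:realizable} as a direct consequence of Theorem~\ref{thm:K}, the definitions of $\boldsymbol\Sigma(n)$, $\boldsymbol\Pi\boldsymbol\Sigma(n)$, $\boldsymbol\Sigma\boldsymbol\Pi\boldsymbol\Sigma(n)$, and the observation that switching systems with monotone $\Lambda_i$ are $K$-systems, which is precisely the argument you flesh out. Your added details --- the identification $\phi_{j'}(0)=L_{ij'}$, $\phi_{j'}(1)=U_{ij'}$ so that $\cR^{(i)}=\Lambda\circ\phi$ gives $K_{i,\alpha_i(\vec y)}=\Lambda(\phi(\vec y))$, and the check that these polynomial classes with non-negative inputs are componentwise non-decreasing, hence satisfy the positive monotonicity assumption of Definition~\ref{def:Rmono} --- are exactly the omitted verifications, so no gaps remain.
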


The general question of which $k$-tuples of Boolean functions are $\ast$-jointly realizable seems very  difficult  and is likely connected to fundamental problems in algebraic geometry.
We focus here on some initial results for $k=2$ and will consider pairs of Boolean functions with different numbers of inputs.
We start with definitions and results that enumerate consequences of joint realizability of $f\prec g$ on relationships between $\Truth$ and $\False$ sets on subsets of the space of Boolean inputs. 

In the following, and many times throughout the rest of the manuscript, it will be useful to view $\B^n$ as a hypercube embedded in $\R^n_+$ with side lengths of 1. This gives rise to a geometrical structure of $\B^n$, where if $\vec y = (y_1,\dots,y_{i-1},0,y_{i+1},\dots,y_n)$, then $\vec y + \hat e_i = (y_1,\dots,y_{i-1},1,y_{i+1},\dots,y_n)$, where $\hat e_i$ is the standard $i$-th basis vector in $\R^n$. This defines the geometrical idea of \textit{floor} and \textit{ceiling} in the $i$-th direction of the hypercube $\B^n$. When $f \prec g$, there are relationships between the $\Truth$ and $\False$ sets on the floors and ceilings in all directions.

\begin{defn}
We define \textit{the ceiling (of $\B^n$) in the $i$-th normal direction} as the set
\begin{equation*}
\bC_i := \{(y_1,y_2,\dots,y_n) \in \B^n \mid y_i = 1\}
\end{equation*}
and similarly we define \textit{the floor (of $\B^n$) in the $i$-th normal direction} as the set
\begin{equation*}
\bF_i := \{(y_1,y_2,\dots,y_n) \in \B^n \mid y_i = 0\}.
\end{equation*}
Notice that $\bC_i$ and $\bF_i$ are both hypercubes of dimension $(n-1)$, that $\B^n = \bC_i \cup \bF_i$, and that $\bC_i = \bF_i + \hat e_i$.
\end{defn}

Next we define the idea of a \textit{collapse}, in which a floor and ceiling of $\B^n$ are considered objects embedded in the hypercube $\B^{n-1}$.

\begin{defn}
For a given $i \in \{1,\dots,n\}$, we define the \textit{$i$-th collapse} as the function $\ttCol_i: \B^n \to \B^{n-1}$ which removes the $i$-th coordinate, defined as
\begin{equation*}
\ttCol_i(\left(y_1,\dots,y_n\right)) := \left(y_1,\dots, y_{i-1}, y_{i+1},\dots,y_n\right) \end{equation*}
Then for any subset $U \subset \B^n$ we have
\begin{equation*}
\ttCol_i(U) = \{ \left(y_1,\dots, y_{i-1}, y_{i+1},\dots,y_n\right) \in \B^{n-1} \mid \left(y_1,\dots,y_n\right) \in U\} .
\end{equation*}
\end{defn}

Using the notions of floor, ceiling, and collapse, we move through a series of results that are critical to future proofs involving $\ast$-joint realizability for $\Lambda$ function classes.

\begin{lem}\label{lem:floorCeilingUpperset}
If $f\in \MBF^+(n)$, then for all $i \in \{1, \dots, n\}$, 
\begin{equation*}
\ttCol_i(\Truth(f)|_{\bF_i}) \subseteq \ttCol_i(\Truth(f)|_{\bC_i})
\end{equation*}
\end{lem}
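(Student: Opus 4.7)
The plan is to unfold the definitions and use positivity of $f$ in coordinate $x_i$ directly. The statement is essentially that if $(\vec y)$ with $y_i = 0$ lies in $\Truth(f)$, then toggling the $i$-th coordinate to $1$ keeps it in $\Truth(f)$, which is exactly what $f|_{x_i=0} \leq f|_{x_i=1}$ asserts pointwise for a Boolean-valued function.

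Concretely, I would proceed as follows. Fix $i \in \{1,\dots,n\}$ and let $\vec z \in \ttCol_i(\Truth(f)|_{\bF_i})$ be arbitrary. By definition of $\ttCol_i$ and of the truth set restricted to the floor, there exists $\vec y \in \bF_i$, i.e.\ with $y_i = 0$, such that $\ttCol_i(\vec y) = \vec z$ and $f(\vec y) = 1$. Now set $\vec y\,' := \vec y + \hat e_i$, so that $y'_i = 1$, giving $\vec y\,' \in \bC_i$. Since $\vec y\,'$ and $\vec y$ agree in every coordinate other than $i$, we immediately have $\ttCol_i(\vec y\,') = \ttCol_i(\vec y) = \vec z$.

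The key step is to show $f(\vec y\,') = 1$. Because $f \in \MBF^+(n)$, Definition~\ref{pos} gives $f|_{x_i = 0} \leq f|_{x_i = 1}$, and evaluating this inequality at the common arguments $(y_1,\dots,y_{i-1},y_{i+1},\dots,y_n)$ yields $f(\vec y) \leq f(\vec y\,')$. Since $f$ is Boolean-valued and $f(\vec y) = 1$, we conclude $f(\vec y\,') = 1$, hence $\vec y\,' \in \Truth(f)|_{\bC_i}$ and therefore $\vec z = \ttCol_i(\vec y\,') \in \ttCol_i(\Truth(f)|_{\bC_i})$. As $\vec z$ was arbitrary, the desired inclusion follows.

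I do not anticipate any real obstacle here: the lemma is a direct geometric restatement of monotonicity of $f$ in $x_i$, and the only thing to be careful about is notational bookkeeping, in particular keeping the identification $\bC_i = \bF_i + \hat e_i$ explicit so that the map $\vec y \mapsto \vec y + \hat e_i$ clearly provides an injection $\Truth(f)|_{\bF_i} \hookrightarrow \Truth(f)|_{\bC_i}$ that commutes with $\ttCol_i$.
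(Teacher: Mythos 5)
Your proof is correct and matches the paper's argument in substance: both hinge on the observation that positive monotonicity in $x_i$ sends any $\vec y \in \Truth(f)|_{\bF_i}$ to $\vec y + \hat e_i \in \Truth(f)|_{\bC_i}$, and that $\ttCol_i(\vec y) = \ttCol_i(\vec y + \hat e_i)$. The only cosmetic difference is that the paper packages monotonicity as the statement that $\Truth(f)$ is an upperset of $\B^n$ viewed as a lattice, whereas you unfold Definition~\ref{pos} pointwise in the $i$-th coordinate, which amounts to the same thing.
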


\begin{proof}
Observe that the hypercube can be viewed as a distributive lattice via the relation $\leq$ on the corners of the hypercube by 
\[ \vec y \leq \vec z \in \B^n \iff \left(y_i =1 \Rightarrow z_i = 1\right) \text{ for all } i = 1,\dots,n.\] 
Notice that since any $f \in$ \textbf{MBF}$^+(n)$ is positive monotone, $\Truth(f)$ is an upperset of $\B^n$ viewed as a lattice.
Therefore, for any $\vec y \in \Truth(f)|_{\bF_i}$, we have $\vec y + \hat e_i \in \Truth(f)|_{\bC_i}$. Under the collapse operation, we have $\ttCol_i(\vec y) = \ttCol_i(\vec y + \hat e_i)$, completing the proof.
\end{proof}

\begin{prop}\label{prop:floorFloor}
If $f, g\in \MBF^+(n)$ and $f \prec g$, then for all $i \in \{1, \dots, n\}$, 
\begin{equation}\label{eq:fgfloorceil}
\ttCol_i(\Truth(f)|_{\bF_i}) \subseteq \ttCol_i(\Truth(g)|_{\bF_i}) \text{ and } \ttCol_i(\Truth(f)|_{\bC_i}) \subseteq \ttCol_i(\Truth(g)|_{\bC_i}).
\end{equation}
\end{prop}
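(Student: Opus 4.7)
The plan is to unfold definitions and exploit the fact that both set restriction and set image under a map preserve inclusion. This proposition is essentially a direct translation of the hypothesis $f \prec g$ (meaning $\Truth(f) \subseteq \Truth(g)$) through two elementary set-theoretic operations.

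First I would observe that by Definition~\ref{def:prec}, $f \prec g$ is exactly the statement $\Truth(f) \subseteq \Truth(g)$. Intersecting both sides of this inclusion with the subset $\bF_i \subseteq \B^n$ yields
\[
\Truth(f)|_{\bF_i} = \Truth(f) \cap \bF_i \subseteq \Truth(g) \cap \bF_i = \Truth(g)|_{\bF_i},
\]
and the identical argument with $\bC_i$ in place of $\bF_i$ gives $\Truth(f)|_{\bC_i} \subseteq \Truth(g)|_{\bC_i}$. This uses only that $A \subseteq B$ implies $A \cap U \subseteq B \cap U$ for any $U$.

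Next I would apply $\ttCol_i$ to both sides of each of these inclusions. Because $\ttCol_i : \B^n \to \B^{n-1}$ is a function, its image respects subset inclusion: if $A \subseteq B$ then $\ttCol_i(A) \subseteq \ttCol_i(B)$. Applying this to the two inclusions from the previous step produces the two statements of~\eqref{eq:fgfloorceil}.

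The main obstacle is essentially bookkeeping—there is no nontrivial content beyond the monotonicity of the two operations $(\cdot)|_U$ and $\ttCol_i(\cdot)$ with respect to $\subseteq$. Note that the hypothesis $f,g \in \MBF^+(n)$ is not actually needed for this proposition (the same argument works for arbitrary Boolean functions); the positivity assumption is presumably retained for uniformity with the surrounding lemmas, such as Lemma~\ref{lem:floorCeilingUpperset}, where monotonicity is genuinely used.
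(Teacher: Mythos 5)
Your proof is correct and follows essentially the same route as the paper's: translate $f \prec g$ into $\Truth(f) \subseteq \Truth(g)$, restrict to $\bF_i$ (resp.\ $\bC_i$), and apply $\ttCol_i$, using that both operations preserve inclusion. Your side remark that the hypothesis $f,g \in \MBF^+(n)$ is not actually used is accurate as well.
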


\begin{proof}
Recall that $f \prec g$ implies $\Truth(f) \subseteq \Truth(g)$, which implies that $\Truth(f)|_{\bF_i} \subseteq \Truth(g)|_{\bF_i}$ and that $\Truth(f)|_{\bC_i} \subseteq \Truth(g)|_{\bC_i}$. Since in both cases, the collapse operation occurs over the same set, we have~\eqref{eq:fgfloorceil} as desired.
\end{proof}

\begin{prop}\label{prop:floorCeiling}
If $f, g\in \MBF^+(n)$ and $f \prec g$, then for all $i \in \{1, \dots, n\}$, 
\begin{equation*}
\ttCol_i(\Truth(f)|_{\bF_i}) \subseteq \ttCol_i(\Truth(g)|_{\bC_i})
\end{equation*}
\end{prop}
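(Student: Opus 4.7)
The proof should be a short chain of inclusions combining the two immediately preceding results. The plan is to observe that there are two natural routes from $\ttCol_i(\Truth(f)|_{\bF_i})$ to $\ttCol_i(\Truth(g)|_{\bC_i})$, either one of which suffices.

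The first route is to first move from $f$ to $g$ while staying on the floor, and then move from the floor to the ceiling inside $g$. Concretely, Proposition~\ref{prop:floorFloor} gives
\[
\ttCol_i(\Truth(f)|_{\bF_i}) \subseteq \ttCol_i(\Truth(g)|_{\bF_i}),
\]
and then Lemma~\ref{lem:floorCeilingUpperset} applied to $g \in \MBF^+(n)$ gives
\[
\ttCol_i(\Truth(g)|_{\bF_i}) \subseteq \ttCol_i(\Truth(g)|_{\bC_i}).
\]
Chaining these two inclusions yields the desired containment.

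The alternative route, which I would mention only as a sanity check, is to move from the floor to the ceiling inside $f$ first, using Lemma~\ref{lem:floorCeilingUpperset} applied to $f$, and then use the ceiling half of Proposition~\ref{prop:floorFloor} to pass from $f$ to $g$. Both routes use each of the two hypotheses (monotonicity and $f\prec g$) exactly once, which is expected since the conclusion genuinely requires both. There is no real obstacle here: the only thing to verify is that the hypotheses of the cited results are satisfied, which is immediate because $f, g \in \MBF^+(n)$ and $f \prec g$ by assumption. The entire proof should be two or three lines of display math.
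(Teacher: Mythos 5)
Your proposal is correct and matches the paper's proof in essence: the paper chains the same two results, using your ``alternative'' order (Lemma~\ref{lem:floorCeilingUpperset} applied to $f$ first, then the ceiling half of Proposition~\ref{prop:floorFloor}), while your primary route simply applies them in the opposite order. Both two-line chains are valid, so there is nothing to fix.
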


\begin{proof}
By Lemma~\ref{lem:floorCeilingUpperset} we see have $\ttCol_i(\Truth(f)|_{\bF_i}) \subseteq \ttCol_i(\Truth(f)|_{\bC_i})$, and by Proposition~\ref{prop:floorFloor} we have $\ttCol_i(\Truth(f)|_{\bC_i}) \subseteq \ttCol_i(\Truth(g)|_{\bC_i})$, which completes the proof.
\end{proof}

%%%%%%%%%%%%%%%%%%%%%%%%%%%%%%%%

The following two technical results for special forms of $\Lambda$ functions are used extensively in the coming sections. Proofs for Theorem~\ref{thm:factorTermComparable} and Theorem~\ref{thm:extendedEmbedding} are found in Appendix~\ref{appendixProofs}.

\begin{defn}
Let $\Lambda$ be a $\ast$-interaction function. We say $z_i$ is a \textit{factor (of $\Lambda$)} if there is another map $\Lambda'$ that does not depend on $z_i$ such that $\Lambda = z_i \Lambda'$. Similarly we say $z_i$ is a \textit{simple term (of $\Lambda$)} if we can write $\Lambda$ as $\Lambda = z_i + \Lambda'$. Notice if $\Lambda$ is a $\Sigma$-interaction function, for all $i \in \{1, \dots, n\}$, $z_i$ is a simple term.
\end{defn}

\begin{thm}\label{thm:factorTermComparable}
Let $\ast$ be $\Sigma$, $\Pi\Sigma$, or $\Sigma\Pi\Sigma$.
Let $f, g\in \MBF^+(n)$, with $f \prec g$, be $\ast$-jointly realizable. Let $(\Lambda\circ\phi,$\textbf{RN}) $\ast$-jointly realize $(f,g)$. For each $\ell \in \{1, \dots, n\}$, if $z_\ell$ is a factor or a simple term of $\Lambda$, then
\begin{align}
\ttCol_\ell(\Truth(f)|_{\bC_\ell}) &\subseteq \ttCol_\ell(\Truth(g)|_{\bF_\ell}), \text{ or} \label{test}\\ 
\ttCol_\ell(\Truth(f)|_{\bC_\ell}) &\supseteq \ttCol_\ell(\Truth(g)|_{\bF_\ell}) \label{test2}
\end{align}
\end{thm}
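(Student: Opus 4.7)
The plan is to reduce both sides of the asserted containment to super-level sets of a single auxiliary function $L:\B^{n-1}\to\R$ obtained by peeling $z_\ell$ off the $\ast$-interaction function $\Lambda$. The key observation is that once both collapses have the form $\{\vec y' : L(\vec y') > c\}$ for a common $L$ but possibly different real constants $c$, they are automatically linearly ordered by inclusion. The argument is uniform across $\ast \in \{\Sigma, \Pi\Sigma, \Sigma\Pi\Sigma\}$: membership enters only through the two algebraic decompositions of $\Lambda$ assumed in the hypothesis.

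First, by Lemma~\ref{lem:realizable} there exist thresholds $\theta_f$ and $\theta_g$ with $f(\vec y)=1 \iff \cR(\vec y) > \theta_f$ and $g(\vec y)=1 \iff \cR(\vec y) > \theta_g$, where $\cR = \Lambda \circ \phi$. Set $a := \phi_\ell(0)$ and $b := \phi_\ell(1)$, so $0 \le a < b$. In the factor case, write $\Lambda(z) = z_\ell \Lambda'(z_{\neq \ell})$ and define $L(\vec y') := \Lambda'\bigl((\phi_j(y'_j))_{j \neq \ell}\bigr)$; then $\cR(\vec y) = b\,L(\ttCol_\ell \vec y)$ on $\bC_\ell$ and $\cR(\vec y) = a\,L(\ttCol_\ell \vec y)$ on $\bF_\ell$. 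In the simple-term case, write $\Lambda(z) = z_\ell + \Lambda'(z_{\neq \ell})$, define $L$ analogously, and obtain $\cR(\vec y) = \phi_\ell(y_\ell) + L(\ttCol_\ell \vec y)$.

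Unwrapping the definition of $\ttCol_\ell$ on the truth sets of interest then yields
\[
\ttCol_\ell(\Truth(f)|_{\bC_\ell}) = \{\vec y' \in \B^{n-1}: L(\vec y') > c_f\}, \quad \ttCol_\ell(\Truth(g)|_{\bF_\ell}) = \{\vec y' \in \B^{n-1}: L(\vec y') > c_g\},
\]
with $(c_f, c_g) = (\theta_f/b,\, \theta_g/a)$ in the factor case and $(c_f, c_g) = (\theta_f - b,\, \theta_g - a)$ in the simple-term case. Because both sets are super-level sets of the \emph{same} function $L$, they are nested: either $c_f \ge c_g$, forcing \eqref{test}, or $c_f \le c_g$, forcing \eqref{test2}.

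The only subtle point is the branch $a = 0$ in the factor case, where the formula ``$c_g = \theta_g/a$'' is ill-defined. This is handled separately: $\cR \equiv 0$ on $\bF_\ell$ forces $\Truth(g)|_{\bF_\ell} = \emptyset$ (thresholds being strictly positive), so $\ttCol_\ell(\Truth(g)|_{\bF_\ell}) = \emptyset$ and \eqref{test2} holds vacuously. Beyond this edge case, the proof is essentially bookkeeping: verifying that $L$ is well-defined (it does not depend on $y_\ell$, inheriting this from $\Lambda'$), and that the two realization identities for $\cR$ translate the defining condition of $\ttCol_\ell$ into the stated super-level descriptions.
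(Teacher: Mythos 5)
Your proof is correct, and it takes a genuinely different route from the paper's. The paper argues by contradiction: negating both \eqref{test} and \eqref{test2} yields two witnesses $\vec y, \vec w \in \bF_\ell$ with $g(\vec y)=0$, $f(\vec y+\hat e_\ell)=1$ and $g(\vec w)=1$, $f(\vec w+\hat e_\ell)=0$; the factor relation $\Lambda(\phi(\vec v+\hat e_\ell)) = \frac{\phi_\ell(1)}{\phi_\ell(0)}\Lambda(\phi(\vec v))$ (or the additive analogue for a simple term) then converts the four realization inequalities into $\theta_g > \frac{\phi_\ell(0)}{\phi_\ell(1)}\theta_f$ and $\theta_g < \frac{\phi_\ell(0)}{\phi_\ell(1)}\theta_f$ simultaneously, a contradiction. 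You instead prove the dichotomy directly, exhibiting both collapsed truth sets as super-level sets $\{\vec y' : L(\vec y') > c\}$ of one common function $L$ on $\B^{n-1}$ and invoking the total order on the pair $(c_f, c_g)$. The underlying algebra --- peeling $z_\ell$ off $\Lambda$ --- is identical, and the two arguments are essentially dual: the paper's contradiction is exactly the impossibility of $c_f$ and $c_g$ being incomparable. What your version buys is structural information beyond the stated claim (all collapses arising this way form a chain under inclusion, since they are super-level sets of one function), plus a point of rigor the paper glosses over: in the factor case the paper divides by $\phi_\ell(0)$, but Lemma~\ref{lem:realizable} only requires $\phi_\ell(0) \in \R_+$ with $\phi_\ell(0) < \phi_\ell(1)$, and the paper's convention (Definition~\ref{def:Kmono}) allows the value $0$; your separate treatment of $a = 0$, where positivity of the threshold (Definition~\ref{def:param_reg_net}) forces $\Truth(g)|_{\bF_\ell} = \emptyset$ and \eqref{test2} holds vacuously, closes that edge case cleanly. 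What the paper's approach buys in exchange is locality --- it needs only the two hypothetical witness points and never constructs $L$ or tracks the collapse bijection --- which keeps its write-up shorter. One small implicit step in both arguments, worth a sentence in yours: the biconditional $f(\vec y)=1 \iff \cR(\vec y) > \theta_f$ uses that $\cR(\vec y) \neq \theta_f$ for every $\vec y$, which is forced by the realization in \eqref{def2:K} being well-defined.
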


\begin{thm}\label{thm:extendedEmbedding}
Let $\ast$ be $\Sigma$, $\Pi\Sigma$, or $\Sigma\Pi\Sigma$.
Let $n>1$. Let $f, g\in \MBF^+(n)$ be $\ast$-jointly realizable MBFs on $\B^n$ with $f \prec g$. Let $\ell \in \{1,\dots,n\}$ and $U \in \{\bF_\ell, \bC_\ell \}$. Then 
\begin{enumerate}
\item[a)] the functions $f'_U,g'_U \in \MBF^+(n-1)$ defined by
\[ \Truth(f'_U) = \ttCol_\ell(\Truth(f)\big|_{U}) \text{ and } \Truth(g'_U) = \ttCol_\ell(\Truth(g)\big|_{U})\]
are $\ast$-jointly realizable MBFs on $\B^{n-1}$, and 
\item[b)]  there is a single $\ast$-interaction function $\Lambda'$ along with maps $\phi_{\bC_\ell},\phi_{\bF_\ell}$ and weighted regulatory networks \textbf{RN}$_{\bC_\ell}$ and \textbf{RN}$_{\bF_\ell}$ such that $(\Lambda'\circ \phi_{\bF_\ell},$\textbf{RN}$_{\bF_\ell}$) $\ast$-jointly realizes $(f'_{\bF_\ell}, g'_{\bF_\ell})$
and $(\Lambda'\circ \phi_{\bC_\ell},$\textbf{RN}$_{\bC_\ell}$) $\ast$-jointly  realizes $(f'_{\bC_\ell}, g'_{\bC_\ell})$.
\end{enumerate}
\end{thm}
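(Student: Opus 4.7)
The plan is to prove (b) directly; then (a) follows by specializing to the desired $U$. Starting from a $\ast$-joint realization $(\Lambda\circ \phi, \textbf{RN})$ of $(f,g)$ afforded by Lemma~\ref{lem:realizable}, the idea is to construct a single $\ast$-interaction function $\Lambda' : \R^{n-1}_+ \to \R_+$ obtained by eliminating the variable $z_\ell$ from $\Lambda$ in a form-preserving manner, together with two encoding maps $\phi_{\bF_\ell}, \phi_{\bC_\ell} : \B^{n-1} \to \R^{n-1}_+$ and two networks $\textbf{RN}_{\bF_\ell}, \textbf{RN}_{\bC_\ell}$ that absorb the frozen contribution of $z_\ell$ on each face. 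The target identity is $\Lambda(\phi(\vec y)) = \Lambda'(\phi_U(\ttCol_\ell(\vec y))) + C_U$ for all $\vec y \in U$, with the additive constant $C_U$ folded into the thresholds of $\textbf{RN}_U$, so that the pair $(\Lambda' \circ \phi_U, \textbf{RN}_U)$ inherits the cutoff behavior of the original thresholds. The ordering $f'_U \prec g'_U$ is then automatic from $\Truth(f) \subseteq \Truth(g)$ by applying $\ttCol_\ell$.

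Concretely, for $\ast = \Pi\Sigma$ with $\Lambda = \prod_k(\sum_{i \in W_k} z_i)$, let $k_0$ be the unique index (if any) with $\ell \in W_{k_0}$. When $|W_{k_0}| \geq 2$, pick $i_0 \in W_{k_0}\setminus\{\ell\}$, set $\Lambda' = (\sum_{i \in W_{k_0}\setminus\{\ell\}} z_i)\prod_{k \neq k_0}(\sum_{i \in W_k} z_i)$, and define $\phi_{U, i_0}(b) = \phi_{i_0}(b) + \phi_\ell(c_U)$, where $c_U \in \{0,1\}$ is the frozen $\ell$-th coordinate on $U$; all other coordinates of $\phi_U$ agree with $\phi$. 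When $|W_{k_0}| = 1$, so that $z_\ell$ is a factor, take $\Lambda' = \prod_{k \neq k_0}(\sum_{i \in W_k} z_i)$ and absorb $\phi_\ell(c_U)$ multiplicatively into the $\phi_U$-values of some remaining factor. The $\Sigma$ case uses purely additive absorption, and the $\Sigma\Pi\Sigma$ case modifies only the inner product of the unique outer summand containing $z_\ell$, leaving the other outer summands intact. In each case $\phi_{U,i}(0) < \phi_{U,i}(1)$ is preserved: automatic under additive absorption, and under multiplicative absorption it uses $\phi_\ell(c_U) > 0$.

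The main technical obstacle is the degenerate edge cases where absorption stalls. When $z_\ell$ appears as a lone factor (so $|W_{k_0}| = 1$) and the frozen value $\phi_\ell(0) = 0$, the product $\Lambda \circ \phi$ vanishes identically on $\bF_\ell$, forcing $f'_{\bF_\ell} = g'_{\bF_\ell} \equiv 0$. One still assigns the $\Lambda'$ coming from the ceiling construction and equips $\textbf{RN}_{\bF_\ell}$ with thresholds exceeding the range of $\Lambda' \circ \phi_{\bF_\ell}$ on $\B^{n-1}$, thereby realizing the zero functions. An analogous trivialization handles $\Lambda = z_\ell$ or any reduction that removes all remaining product factors: $\Lambda'$ can be taken to be any simple $\Sigma$ function (say $z_1$), and the constant Boolean functions on each face are realized by pushing the thresholds of $\textbf{RN}_U$ outside the image of $\Lambda' \circ \phi_U$. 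Beyond these trivial cases, the remainder of the proof is a routine algebraic verification that the proposed $\Lambda'$ and $\phi_U$ reproduce the original evaluation on $U$ up to the additive constant $C_U$.
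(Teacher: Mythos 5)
Your construction is essentially the paper's own proof: both arguments eliminate $z_\ell$ from $\Lambda$ and absorb the frozen value $\phi_\ell(c_U)$ either additively into a sibling variable of the same inner sum, multiplicatively into a remaining factor, or (when $z_\ell$ is a lone outer summand) into the thresholds of the new network, and both obtain part b) by observing that the resulting $\Lambda'$ is independent of whether $U = \bF_\ell$ or $U = \bC_\ell$. The only real difference is that you explicitly handle degenerate possibilities such as $\phi_\ell(0) = 0$ under multiplicative absorption, a corner the paper's Case 2 silently leaves unaddressed, so your treatment is if anything slightly more careful than the published one.
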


\subsection{Joint realizability in $\B^{n}$ and realizability in $\B^{n+1}$}

We will show that there is a bijection $\eta$ between pairs $(f,g) \in \MBF^+(n) \times \MBF^+(n)$ satisfying $f \prec g$ and $h \in \MBF^+(n+1)$. We use $\eta$  to  relate the $\ast$-joint realizability of a pair $f \prec g$ and the $\ast$-realizability of single function $\eta(f,g) \in \MBF^+(n+1)$. We will use this fact at the end of the section to prove rows 1 and 2 of Table~\ref{tableOfResults}.

\begin{defn}\label{defn:bijection}
Define the map
\[
\eta: \{(f,g) \mid f,g \in \MBF^+(n) \text{ and } f \prec g\} \to \MBF^+(n+1)
\]
by $h = \eta(f,g)$, where for $\vec y \in \B^{n+1}$,
\[
h(\vec y) = \begin{cases}
f(y_1,\dots,y_n) & \text{ if } y_{n+1}=0 \\
g(y_1,\dots,y_n) & \text{ if } y_{n+1}=1
\end{cases} \quad .
\]
\end{defn}

\begin{lem}
The map $\eta$ is a bijection onto $\MBF^+(n+1)$.
\end{lem}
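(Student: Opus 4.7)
The plan is to verify the three standard requirements for $\eta$ to be a bijection onto $\MBF^+(n+1)$: (i) well-definedness of $\eta$, i.e.\ that $\eta(f,g)$ indeed lies in $\MBF^+(n+1)$; (ii) injectivity; and (iii) surjectivity onto $\MBF^+(n+1)$. The entire argument rests on unpacking the definition of $\eta$ one coordinate at a time, together with the elementary fact that $f \prec g$ is equivalent to the pointwise inequality $f(\vec y) \le g(\vec y)$ on $\B^n$.

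For well-definedness, I would verify monotonicity of $h := \eta(f,g)$ in each of the $n+1$ coordinates separately. For $i \in \{1,\dots,n\}$, fix $y_{n+1}$: the restriction of $h$ with $y_{n+1}=0$ is $f$ and with $y_{n+1}=1$ is $g$, both of which are positive in $y_i$, so $h|_{y_i=0} \le h|_{y_i=1}$. For the $(n+1)$-st coordinate, $h|_{y_{n+1}=0} = f$ and $h|_{y_{n+1}=1} = g$, and the hypothesis $\Truth(f) \subseteq \Truth(g)$ gives $f \le g$ pointwise, so $h|_{y_{n+1}=0} \le h|_{y_{n+1}=1}$. Hence $h \in \MBF^+(n+1)$.

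Injectivity is immediate: if $\eta(f,g) = \eta(f',g')$, then restricting both sides to $y_{n+1}=0$ recovers $f = f'$, and restricting to $y_{n+1}=1$ recovers $g = g'$. For surjectivity, given any $h \in \MBF^+(n+1)$, I would define
\[
f(y_1,\dots,y_n) := h(y_1,\dots,y_n,0), \qquad g(y_1,\dots,y_n) := h(y_1,\dots,y_n,1).
\]
Positive monotonicity of $f$ and $g$ in each $y_i$ follows from positive monotonicity of $h$ in $y_i$ by fixing $y_{n+1}$; and positive monotonicity of $h$ in the $(n+1)$-st coordinate gives $f(\vec y) \le g(\vec y)$ pointwise, hence $\Truth(f) \subseteq \Truth(g)$, i.e.\ $f \prec g$. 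By construction $\eta(f,g) = h$.

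None of these steps is really an obstacle; the only mild subtlety is matching the condition $f \prec g$ on $\Truth$ sets with pointwise $\{0,1\}$-inequality, which is a direct consequence of the fact that Boolean functions are $\{0,1\}$-valued and $f(\vec y)=1 \Rightarrow g(\vec y)=1$ is exactly $\Truth(f)\subseteq\Truth(g)$.
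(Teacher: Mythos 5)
Your proposal is correct and follows essentially the same route as the paper: floor/ceiling restriction in the $(n+1)$-th direction for well-definedness and surjectivity, direct restriction for injectivity, with the paper's appeal to Lemma~\ref{lem:floorCeilingUpperset} replaced by your equivalent pointwise observation that $f \prec g$ means $f \le g$ on $\B^n$. No gaps; the coordinate-by-coordinate monotonicity check is exactly the content the paper phrases in terms of $\ttCol_{n+1}$ and $\Truth$ sets.
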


\begin{proof} 
  First we describe the range of $\eta$. Let $f,g \in \MBF^+(n)$ with $f \prec g$. By definition $f$ and $g$ describe the floor and ceiling of $\eta(f,g) = h$ in the $(n+1)$-th direction, \begin{equation}\label{eq:truths}
  \Truth(f) = \ttCol_{n+1}\left(\Truth(h)\big|_{\bF_{n+1}}\right) \text{ and } \Truth(g) = \ttCol_{n+1}\left(\Truth(h)\big|_{\bC_{n+1}}\right). \end{equation} 
  Then the positive monotonicity of $f$ and $g$ induce positive monotonicity on the floor and ceiling of $h$, and $f\prec g$ gives positive monotonicity in the $(n+1)$-th direction. So the range of $\eta$ is contained in $\MBF^+(n+1)$.

It is clear that $\eta$ is injective.  Indeed, if $\eta(f,g) =\eta(f',g')$ then it follows immediately from the definition that $f=f'$ and $g=g'$. 

To show that $\eta$ is surjective, consider  $h \in \MBF^+(n+1)$ and define $f$ and $g$ by setting~\eqref{eq:truths} to be true. Since $h$ satisfies positive monotonicity on its floor and ceiling, $f,g \in \MBF^+(n)$. Also, since \[\ttCol_{n+1}\left(\Truth(h)\big|_{\bF_{n+1}}\right) \subseteq \ttCol_{n+1}\left(\Truth(h)\big|_{\bC_{n+1}}\right),\]  by  Lemma~\ref{lem:floorCeilingUpperset}, we have that $f \prec g$.  We have then constructed  the desired pair $(f,g)$ with $f \prec g$ such that $\eta(f,g)=h$ is well-defined.
\end{proof}

As a consequence of this result, note that  $\eta^{-1}(h) = (f,g)$  is well defined.

\begin{thm} \label{thm:fgImplies}
Let $n \geq 1$. Suppose $f \prec g$ is $\ast$-jointly realizable, where $f,g \in \MBF^+(n)$. Then $h = \eta (f,g)$ is $\ast$-realizable.
\end{thm}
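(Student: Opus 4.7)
The plan is to invoke Lemma~\ref{lem:realizable} to unpack the hypothesis. The $\ast$-joint realizability of $(f,g)$ yields a $\ast$-interaction function $\Lambda$ on $\R^n_+$, monotone encodings $\phi=(\phi_1,\dots,\phi_n)$ with $\phi_j(0)<\phi_j(1)$, a realizing network \textbf{RN} with some node $i$ having two targets, and two thresholds attached to those targets. Because $\Truth(f)\subseteq\Truth(g)$, the threshold attached to $f$ must be strictly larger than the one attached to $g$; denote them $\theta_f>\theta_g>0$. Writing $\cR=\Lambda\circ\phi$, one has $f(\vec y)=1\iff\cR(\vec y)>\theta_f$ and $g(\vec y)=1\iff\cR(\vec y)>\theta_g$. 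To show $\ast$-realizability of $h=\eta(f,g)$, the goal is to build on $\B^{n+1}$ a $\ast$-interaction function $\Lambda'$, an extended encoding $\phi'$, and a \emph{single} threshold $\theta'$ so that $\cR':=\Lambda'\circ\phi'$ satisfies $h(\vec y,y_{n+1})=1\iff\cR'(\vec y,y_{n+1})>\theta'$; Lemma~\ref{lem:realizable}, applied in the reverse direction, then finishes the argument.

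The key idea is to add one new input that ``toggles'' the effective threshold, collapsing the pair $(\theta_f,\theta_g)$ to a single $\theta'$. The construction depends on whether the algebraic class admits addition at its top level. For $\ast\in\{\Sigma,\Sigma\Pi\Sigma\}$ I would define $\Lambda'(z_1,\dots,z_{n+1}):=\Lambda(z_1,\dots,z_n)+z_{n+1}$. This remains in $\ast(n+1)$: for $\Sigma$, enlarge the index set by $\{n+1\}$; for $\Sigma\Pi\Sigma$, append a new outer summand whose inner product and sum are both the singleton $z_{n+1}$. Fixing any constant $c>0$, set $\phi'_{n+1}(0):=c$, $\phi'_{n+1}(1):=c+(\theta_f-\theta_g)$, and $\theta':=\theta_f+c$. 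A one-line calculation gives $\cR'(\vec y,0)>\theta'\iff\cR(\vec y)>\theta_f$ and $\cR'(\vec y,1)>\theta'\iff\cR(\vec y)>\theta_g$, which matches $h$ on its $(n+1)$-th floor and ceiling respectively.

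For $\ast=\Pi\Sigma$ the additive extension leaves the class, so I would replace addition by multiplication: set $\Lambda'(z_1,\dots,z_{n+1}):=\Lambda(z_1,\dots,z_n)\cdot z_{n+1}$, which lies in $\Pi\Sigma(n+1)$ once we enlarge the partition by the singleton block $\{n+1\}$. Choose $\phi'_{n+1}(0):=1$, $\phi'_{n+1}(1):=\theta_f/\theta_g>1$, and $\theta':=\theta_f$; the same pair of equivalences holds for $\cR'$ by direct substitution. In both cases the realizing network is obtained from \textbf{RN} by adjoining one new node as an additional positive source of $i$ and retaining only one of $i$'s two original targets (the one carrying $\theta'$). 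The only items requiring explicit verification are that $\Lambda'$ lies in the prescribed class (routine from the partition/summation descriptions) and that $\phi'_{n+1}(0)<\phi'_{n+1}(1)$ (immediate from $\theta_f>\theta_g>0$). Since the construction trades two thresholds for one at the cost of one extra dimension, no genuine combinatorial obstruction arises; the mild subtlety is only the case split forced by the differing algebraic structures of $\Sigma$, $\Pi\Sigma$, and $\Sigma\Pi\Sigma$.
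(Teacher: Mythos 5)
Your proposal is correct and follows essentially the same route as the paper's proof: both invoke Lemma~\ref{lem:realizable}, note $\theta_f > \theta_g$, and then extend $\Lambda$ by a new variable that is a simple term for $\ast \in \{\Sigma, \Sigma\Pi\Sigma\}$ (your $c$ playing the role of the paper's $\epsilon$, with identical assignments $\phi'_{n+1}(0)=c$, $\phi'_{n+1}(1)=c+\theta_f-\theta_g$, $\theta'=\theta_f+c$) and a factor for $\ast = \Pi\Sigma$ (with the paper's exact choices $\phi'_{n+1}(0)=1$, $\phi'_{n+1}(1)=\theta_f/\theta_g$, $\theta'=\theta_f$). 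Your explicit remark that the new network retains only one of the two original targets is a minor point of care the paper leaves implicit, but the argument is the same.
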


\begin{proof}  Suppose $f \prec g$ is $\ast$-jointly realizable. Then by Lemma~\ref{lem:realizable} there exists $(\Lambda\circ\phi,$ \textbf{RN}) that $*$-jointly realizes $f$ and $g$. Moreover, the proof of Theorem~\ref{thm:KCompatible} tells us that the thresholds in \textbf{RN} associated to these functions satisfy $\theta_g < \theta_f$. We seek to construct $(\Lambda'\circ\phi',$ \textbf{RN}$')$ that $\ast$-realizes $h$. To build \textbf{RN}$'$, we take \textbf{RN} and add a source to the node under consideration from any other node in the network. It remains to discover the weight, $\theta'$ of that edge.

\textbf{Case 1: ($\ast = \Sigma$ or $\ast = \Sigma\Pi\Sigma$).} By the assumption since $f \prec g$ is $\ast$-jointly realizable, we have
\[
f(y_1,\dots,y_n) = \begin{cases}
1 &\text{if } \Lambda(\phi_1(y_1),\dots\phi_n(y_n)) > \theta_f \\
0 &\text{if } \Lambda(\phi_1(y_1),\dots\phi_n(y_n)) < \theta_f
\end{cases}
\]
\[
g(y_1,\dots,y_n) = \begin{cases}
1 &\text{if } \Lambda(\phi_1(y_1),\dots\phi_n(y_n)) > \theta_g \\
0 &\text{if } \Lambda(\phi_1(y_1),\dots\phi_n(y_n)) < \theta_g
\end{cases}
\]

For $\vec y \in \B^{n+1}$, we assign $\Lambda'\circ\phi' = \Lambda \circ\phi+ \phi'_{n+1}$ for $\phi' = (\phi_1,\dots,\phi_n,\phi'_{n+1})$ with some choice of $\phi'_{n+1}$. We know $f(y_1,\dots,y_n) = h(y_1,\dots,y_n,0)$ and $g(y_1,\dots,y_n) = h(y_1,\dots,y_n,1)$, so whatever $\phi'_{n+1}$ and $\theta'$ we choose must satisfy
\[
f(y_1,\dots,y_n) =h(y_1,\dots,y_n,0) =  \begin{cases}
1 &\text{if } \Lambda(\phi_1(y_1),\dots\phi_n(y_n)) +  \phi'_{n+1}(0)> \theta' \\
0 &\text{if } \Lambda(\phi_1(y_1),\dots\phi_n(y_n))+  \phi'_{n+1}(0) < \theta'
\end{cases}
\]
and 
\[
g(y_1,\dots,y_n) =h(y_1,\dots,y_n,1) =  \begin{cases}
1 &\text{if } \Lambda(\phi_1(y_1),\dots\phi_n(y_n)) +  \phi'_{n+1}(1)> \theta' \\
0 &\text{if } \Lambda(\phi_1(y_1),\dots\phi_n(y_n))+  \phi'_{n+1}(1) < \theta'
\end{cases}
\]
Consider the assignment $\phi'_{n+1}(0)=\epsilon$, $\phi'_{n+1}(1)=\theta_f + \epsilon -\theta_g$, and $\theta' := \theta_f+\epsilon$, where $\epsilon$ is any sufficiently small real number $0<\epsilon < \theta_f-\theta_g$. 
It is easy to check that with this assignment, $h(y_1,\dots,y_n,0) = 1$ if and only if $f(y_1,\dots,y_n)=1$ and that $h(y_1,\dots,y_n,1) = 1$ if and only if $g(y_1,\dots,y_n)=1$.
This completes the construction of  $(\Lambda'\circ \phi',$ \textbf{RN}$')$ that $\ast$-realizes $h$.

\textbf{Case 2: ($\ast = \Pi\Sigma$)} The proof proceeds analogously with Case 1, where the only difference is replacement of a simple term by a factor in $\Lambda'$. It is easy to verify that the  following assignments $\ast$-realize $h$: $\phi'=(\phi_1,\dots,\phi_n,\phi'_{n+1} )$, 
\[
\Lambda'\circ\phi'(y_1,\dots,y_n,y_{n+1}) = \phi'_{n+1}(y_{n+1}) \cdot \left(\Lambda\circ\phi(y_1,\dots,y_n) \right)\ ,
\] $\theta' = \theta_f$, $\phi'_{n+1}(0)=1$, $\phi_{n+1}(1) = \theta_f / \theta_g$.
\end{proof}

We do not know if the converse of Theorem~\ref{thm:fgImplies} is true in general.
 However, with an additional constraint we obtain the following theorem.

\begin{thm} \label{thm:hImplies}
Let $n \geq 1$. Suppose $(\Lambda\circ\phi,$\textbf{RN}) \textit{$\ast$-realizes} $h \in \MBF(n+1)$. If $z_i$ is a factor or simple term of $\Lambda$, then $f,g \in \MBF^+(n)$ defined by 
\[ \Truth(f) = \ttCol_{i}(\Truth(h)|_{\bF_i}),\quad \Truth(g) = \ttCol_{i}(\Truth(h)|_{\bC_i})\] are $\ast$-jointly realizable.
\end{thm}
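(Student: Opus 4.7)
The plan is to use the special role of $z_i$ in $\Lambda$ to peel off the $i$-th coordinate, producing a single reduced $\ast$-interaction function $\Lambda'$ on $\R^n$ that realizes both $f$ and $g$ simultaneously at two different thresholds determined by $\phi_i(0)$ and $\phi_i(1)$.

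I would begin by splitting into cases. If $z_i$ is a simple term of $\Lambda$, write $\Lambda = z_i + \Lambda'$; if $z_i$ is a factor, write $\Lambda = z_i \cdot \Lambda'$. In each case $\Lambda'$ is independent of $z_i$, and a short case-check across the classes $\Sigma$, $\Pi\Sigma$, and $\Sigma\Pi\Sigma$ confirms that $\Lambda'$ remains in the same class. The most delicate instance is $\Sigma\Pi\Sigma$: $z_i$ being a factor of $\Lambda = \sum_k P_k$ forces each summand $P_k$ to contain a singleton factor $(z_i)$, whose removal leaves a valid $\Sigma\Pi\Sigma$ expression; $z_i$ being a simple term corresponds to a summand equal to the singleton product $(z_i)$, whose removal is likewise benign.

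Next I would translate the single threshold $\theta_h$ that realizes $h$ in \textbf{RN} into two thresholds for $f$ and $g$. Let $\phi'$ denote $\phi$ with its $i$-th component deleted. For $\vec y' \in \B^n$, on the floor $\bF_i$ we have $\Lambda\circ\phi = \Lambda'\circ\phi'(\vec y') + \phi_i(0)$ in the simple-term case and $\phi_i(0)\cdot(\Lambda'\circ\phi')(\vec y')$ in the factor case; on the ceiling $\bC_i$, replace $\phi_i(0)$ by $\phi_i(1)$. Setting $\theta_f := \theta_h - \phi_i(0)$ and $\theta_g := \theta_h - \phi_i(1)$ in the simple-term case, or $\theta_f := \theta_h/\phi_i(0)$, $\theta_g := \theta_h/\phi_i(1)$ in the factor case, yields thresholds with $\theta_f > \theta_g$ since $\phi_i(0) < \phi_i(1)$. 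Moreover, by construction $f(\vec y') = 1$ iff $\Lambda'\circ\phi'(\vec y') > \theta_f$ and $g(\vec y') = 1$ iff $\Lambda'\circ\phi'(\vec y') > \theta_g$, and positive monotonicity of $f$ and $g$ follows from Lemma~\ref{lem:floorCeilingUpperset} applied to $h$.

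To conclude, I would build $\textbf{RN}'$ from $\textbf{RN}$ by deleting the incoming edge from node $i$ into the realizing node and attaching two outgoing edges with weights $\theta_f$ and $\theta_g$. Lemma~\ref{lem:realizable} applied to $(\Lambda'\circ\phi', \textbf{RN}')$ then certifies that $f \prec g$ is $\ast$-jointly realizable. The main obstacle is the first step: confirming that $\Lambda'$ genuinely stays inside the prescribed algebraic class---especially for $\Sigma\Pi\Sigma$---which is precisely what the ``factor or simple term'' hypothesis was designed to enable. A secondary nuisance is ensuring the derived thresholds remain positive in degenerate cases (for instance $\phi_i(0)=0$ in the factor case, where $f$ is then forced constant); these can be handled by choosing any sufficiently large positive threshold consistent with the constant value of $f$.
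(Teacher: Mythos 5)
Your proposal is correct and follows essentially the same route as the paper's proof: split into the factor and simple-term cases, write $\Lambda = z_i\Lambda'$ or $\Lambda = z_i + \Lambda'$, divide (resp.\ subtract) by $\phi_i(0)$ on the floor and $\phi_i(1)$ on the ceiling to obtain the two thresholds, and rebuild the network by deleting the incoming edge from node $i$ and adding a second target carrying the new weights. The only differences are cosmetic: you make explicit the class-closure check for $\Lambda'$ that the paper leaves implicit, and where you treat degenerate thresholds ad hoc, the paper sets $\theta_f = \max\{0,\theta - \phi_{n+1}(0)\}$ and $\theta_g = \max\{0,\theta - \phi_{n+1}(1)\}$ with a small $\ep$-perturbation if they coincide.
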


\begin{proof}
Without loss of generality assume $i = n+1$. Let $\theta$ be the threshold associated to the realization of $h$.

\textbf{Case 1: ($z_{n+1}$ is a factor)} Then   $\Lambda =  z_{n+1} \Lambda'$ and from Lemma~\ref{lem:realizable}
\[
h(y_1,\dots,y_{n+1}) = \begin{cases}
1 &\text{if } \phi_{n+1}(y_{n+1})\Lambda'(\phi_1(y_1),\dots\phi_n(y_n)) > \theta \\
0 &\text{if } \phi_{n+1}(y_{n+1})\Lambda'(\phi_1(y_1),\dots\phi_n(y_n)) < \theta
\end{cases}
\]

If we restrict our attention to $f$, dividing by $\phi_{n+1}(0)$, the above equation gives us
\[
f(y_1,\dots,y_n) = h(y_1,\dots,y_n,0) = \begin{cases}
1 &\text{if } \Lambda'(\phi_1(y_1),\dots\phi_n(y_n)) > \theta/\phi_{n+1}(0) \\
0 &\text{if } \Lambda'(\phi_1(y_1),\dots\phi_n(y_n)) < \theta/\phi_{n+1}(0)
\end{cases} \quad .
\]
Restricting our attention to $g$ we see that
\[
g(x_1,\dots,x_n) = h(x_1,\dots,x_n,1) = \begin{cases}
1 &\text{if } \Lambda'(\phi_1(x_1),\dots\phi_n(x_n)) > \theta/\phi_{n+1}(1) \\
0 &\text{if } \Lambda'(\phi_1(x_1),\dots\phi_n(x_n)) < \theta/\phi_{n+1}(1). 
\end{cases}
\]
Construct \textbf{RN}$'$ by removing the source edge associated to $n+1$ and adding one target edge to the node under consideration. Assign to one target edge the weight $\theta_f = \theta/\phi_{n+1}(0)$ and assign $\theta_g=\theta/\phi_{n+1}(1)$ to the other. After setting $\phi' = (\phi_1,\dots,\phi_n)$, we obtain  $(\Lambda'\circ\phi',$ \textbf{RN}$')$ that $\ast$-jointly realizes $(f,g)$.

\textbf{Case 2: ($z_{n+1}$ is a simple term)} The argument for this case is similar but  instead of dividing by $\phi_{n+1}(y_{n+1})$, we will  subtract. Specifically, since $y_{n+1}$ is a simple term,
\[
h(y_1,\dots,y_{n+1}) = \begin{cases}
1 &\text{if } \phi_{n+1}(y_{n+1}) + \Lambda'(\phi_1(y_1),\dots\phi_n(y_n)) > \theta \\
0 &\text{if } \phi_{n+1}(y_{n+1}) + \Lambda'(\phi_1(y_1),\dots\phi_n(y_n)) < \theta
\end{cases}
\]
and so 
\[
f(y_1,\dots,y_n) = h(y_1,\dots,y_n,0) = \begin{cases}
1 &\text{if } \Lambda'(\phi_1(y_1),\dots\phi_n(y_n)) > \theta - \phi_{n+1}(0) \\
0 &\text{if } \Lambda'(\phi_1(y_1),\dots\phi_n(y_n)) < \theta - \phi_{n+1}(0)
\end{cases}
\]
\[
g(y_1,\dots,y_n) = h(y_1,\dots,y_n,1) = \begin{cases}
1 &\text{if } \Lambda'(\phi_1(y_1),\dots\phi_n(y_n)) > \theta - \phi_{n+1}(1) \\
0 &\text{if } \Lambda'(\phi_1(y_1),\dots\phi_n(y_n)) < \theta -\phi_{n+1}(1)
\end{cases}
\]
 Construct \textbf{RN}$'$ as before with threshold assignments $\theta_f = \max\{0,\theta - \phi_{n+1}(0)\}$, $\theta_g =\max\{0,\theta - \phi_{n+1}(1)\}$, and a further perturbation by small enough $\ep>0$ if $\theta_f=\theta_g$. Then tuple $(\Lambda'\circ\phi',$ \textbf{RN}$')$ $\ast$-jointly realizes $(f,g)$.
\end{proof}

The following Corollary is an immediate result of Theorems~\ref{thm:fgImplies} and~\ref{thm:hImplies}. It states that, in the $ \Sigma$ class of functions, joint realizability of a pair $(f,g)$ in dimension $n$ is equivalent to the realizability of $\eta(f,g)$ in dimension $n+1$, since every term in $\Lambda$ is simple.

\begin{cor}\label{proofOfRows1and2}
Let $n \geq 1$. Suppose $f \prec g$ and let $h = \eta (f,g)$. Then 
$(f,g)$ is $\Sigma$-jointly realizable if and only if $h$ is $\Sigma$-realizable.
\end{cor}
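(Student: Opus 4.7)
The plan is to argue that this corollary is a direct two-way application of the preceding theorems, with $\ast = \Sigma$ chosen so that every variable appearing in $\Lambda$ is automatically a simple term.

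For the forward direction, I would simply invoke Theorem~\ref{thm:fgImplies} with $\ast = \Sigma$: assuming $(f,g)$ is $\Sigma$-jointly realizable, that theorem produces a $\Sigma$-realization of $h = \eta(f,g)$, with no further work needed.

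For the reverse direction, suppose $h$ is $\Sigma$-realizable. By Lemma~\ref{lem:realizable} there exist $(\Lambda \circ \phi,\mathbf{RN})$ realizing $h$, where $\Lambda(z_1,\ldots,z_{n+1}) = \sum_{i \in V} z_i$ for some $V \subseteq \{1,\ldots,n+1\}$. The goal is to apply Theorem~\ref{thm:hImplies} with index $i = n+1$, which requires that $z_{n+1}$ be a factor or simple term of $\Lambda$. If $n+1 \in V$ then $z_{n+1}$ is manifestly a simple term of $\Lambda$, so Theorem~\ref{thm:hImplies} applies immediately and yields the $\Sigma$-joint realizability of the pair $(f,g)$ obtained from the $(n+1)$-th floor and ceiling of $h$, which is exactly $\eta^{-1}(h)$.

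The one step that requires care is the degenerate case $n+1 \notin V$: then $\Lambda$ does not depend on $z_{n+1}$, so $h$ is independent of $y_{n+1}$ and therefore $f = g$. Here I would modify the realization by passing to $\widetilde\Lambda = z_{n+1} + \Lambda \in \boldsymbol\Sigma(n+1)$ with $\widetilde\phi_{n+1}(0) = \epsilon$ and $\widetilde\phi_{n+1}(1) = 2\epsilon$ for sufficiently small $\epsilon > 0$, and adjust the threshold by $\epsilon$ (or by $\tfrac{3}{2}\epsilon$) so that $(\widetilde\Lambda \circ \widetilde\phi, \mathbf{RN})$ still $\Sigma$-realizes $h$; now $z_{n+1}$ is a simple term of $\widetilde\Lambda$ and Theorem~\ref{thm:hImplies} applies as before. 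The main (minor) obstacle is exactly this bookkeeping in the degenerate case; the essential content of the corollary is that because $\Sigma$-interaction functions are sums, the hypothesis of Theorem~\ref{thm:hImplies} is satisfied for the distinguished $(n+1)$-th coordinate, closing the equivalence.
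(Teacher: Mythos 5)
Your proof is correct and takes essentially the same route as the paper, which derives the corollary immediately from Theorems~\ref{thm:fgImplies} and~\ref{thm:hImplies} by observing that every variable of a $\Sigma$-interaction function is a simple term. Your additional treatment of the degenerate case $n+1 \notin V$ (where $\Lambda$ omits $z_{n+1}$, forcing $f = g$) is sound and in fact patches a detail the paper's one-line argument silently assumes, since the claim that $z_i$ is a simple term for \emph{all} $i$ only holds literally when $V = \{1,\dots,n+1\}$.
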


As promised, we now show the equivalence of threshold (linearly separable) functions and $\Sigma$-realizability, see Definition~\ref{def:threshold}.

\begin{lem}\label{lem:SigmaIsThreshold}
Let $f \in \MBF^+(n)$.
\begin{enumerate}
\item If $f$ is $\Sigma$-realizable then $f$ is a threshold function.

\item If $f$ is a threshold function with separating structure $(a_1,\dots,a_n,\theta')$ such that $a_1,\dots,a_n\geq 0$ and $\theta' > -n$, then $f$ is $\Sigma$-realizable.
\end{enumerate}
\end{lem}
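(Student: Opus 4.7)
The proof splits into two independent directions. Direction (1) is a direct unpacking of the definition of $\Sigma$-realizability into the affine form demanded by Definition~\ref{def:threshold}. Direction (2) is a constructive one: given a separating structure for $f$, I will exhibit explicit choices of $V$, $\Lambda \in \boldsymbol\Sigma(n)$, $\phi$, and a threshold that realize $f$. I expect the only subtle point to be the edge case where some $a_i = 0$, since Lemma~\ref{lem:realizable} requires $\phi_i(0) < \phi_i(1)$ strictly.

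For (1), I would apply Lemma~\ref{lem:realizable} to obtain $V \subseteq \{1,\dots,n\}$, a $\Sigma$-interaction function $\Lambda(z) = \sum_{i \in V} z_i$, maps $\phi_i : \B \to \R_+$ with $\phi_i(0) < \phi_i(1)$, and a threshold $\theta > 0$ such that $f(\vec y) = 1 \iff \Lambda(\phi(\vec y)) > \theta$. Since $y_i \in \{0,1\}$, each $\phi_i$ is affine in $y_i$, that is, $\phi_i(y_i) = \phi_i(0) + (\phi_i(1)-\phi_i(0))\, y_i$. Expanding $\Lambda\circ\phi$ and collecting the constant and linear parts gives
\[
\Lambda(\phi(\vec y)) = \sum_{i \in V} \phi_i(0) + \sum_{i \in V} (\phi_i(1)-\phi_i(0))\, y_i .
\]
Setting $a_i := \phi_i(1) - \phi_i(0) > 0$ for $i \in V$, $a_i := 0$ for $i \notin V$, and $\theta' := \theta - \sum_{i \in V} \phi_i(0)$, one obtains $f(\vec y) = 1 \iff \sum_j a_j y_j > \theta'$, so $(a_1, \dots, a_n, \theta')$ is a separating structure for $f$.

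For (2), the natural construction is $V = \{1, \dots, n\}$, $\Lambda(z) = \sum_i z_i$, $\phi_i(y_i) := a_i y_i + 1$, and $\theta := \theta' + n$. Then $\cR(\vec y) = \Lambda\circ\phi(\vec y) = \sum_i a_i y_i + n$, so $\cR(\vec y) > \theta \iff \sum_i a_i y_i > \theta'$. The hypothesis $\theta' > -n$ is exactly what makes $\theta > 0$, and hence a legitimate threshold weight in \textbf{RN}. The construction requires $\phi_i(0) < \phi_i(1)$, i.e., $a_i > 0$; when some $a_i = 0$ I would first replace the given separating structure with a perturbed one having strictly positive coefficients. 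Concretely, let $\tilde a_i := a_i + \epsilon$ for small $\epsilon > 0$ and pick $\tilde \theta' \in [M_0, M_1)$, where $M_0 := \max_{\vec y \in \False(f)} \sum_i \tilde a_i y_i$ and $M_1 := \min_{\vec y \in \Truth(f)} \sum_i \tilde a_i y_i$ (with conventions $\max \emptyset = -\infty$, $\min \emptyset = +\infty$). A short case analysis ($f$ non-constant vs.\ $f \equiv 0$ vs.\ $f \equiv 1$), using that $\sum \tilde a_i y_i \geq 0$ and that $f \equiv 1$ forces $\theta' < 0$, shows $\tilde \theta' > -n$ is always achievable.

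The main obstacle I anticipate is the $a_i = 0$ bookkeeping in part (2): verifying that the perturbed structure simultaneously satisfies $\tilde a_i > 0$ for every $i$ and $\tilde \theta' > -n$. Once this is in hand, applying the direct construction to $(\tilde a_1, \dots, \tilde a_n, \tilde \theta')$ delivers the desired $\Sigma$-realization, and the remaining steps are mechanical translations between the affine form of a threshold function and the $\Lambda\circ\phi$ form of a realizing function.
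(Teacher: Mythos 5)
Your proof is correct and follows essentially the same construction as the paper's: in (1) you set $a_i = \phi_i(1)-\phi_i(0)$ and $\theta' = \theta - \sum_{i \in V}\phi_i(0)$, and in (2) you use exactly the paper's assignment $\phi_i(0)=1$, $\phi_i(1)=1+a_i$, $\theta=\theta'+n$. If anything you are more careful than the paper, whose proof tacitly takes $V=\{1,\dots,n\}$ and passes over the case $a_i=0$ (where its construction yields $\phi_i(0)=\phi_i(1)$, violating the strict inequality $\phi_i(0)<\phi_i(1)$ required by Lemma~\ref{lem:realizable}); your convention $a_i:=0$ for $i\notin V$ and your $\epsilon$-perturbation of the separating structure patch precisely these edge cases.
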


\begin{proof}
(1) Suppose $f$ is $\Sigma$-realizable, and let $(\Lambda\circ\phi,$ \textbf{RN}$)$ $\Sigma$-realize $f$. We construct $a_1, \dots, a_n$ and $\theta'$ as in the sense of Definition~\ref{def:threshold} as follows: set $a_i = \phi_i(1) - \phi_i(0)$, and let $\theta' = \max\{0,\theta - (\phi_1(0) + \dots + \phi_n(0))\}$.

(2) Now suppose $f$ is a threshold function with separating structure $(a_1,\dots,a_n,\theta')$ such that $a_1,\dots,a_n \geq 0$ and $\theta' > -n$. Set $\Lambda = z_1 + \dots + z_n$. Set $\phi_i(0)=1$, $\phi_i(1) = 1 + a_i$, and $\theta = \theta' + n$, to obtain the desired $(\Lambda\circ\phi, \theta)$.
\end{proof}

The following proposition, together with Remark~\ref{rem:realizabilitySpecialCases}, proves the first two rows of Table~\ref{tableOfResults}.
\begin{prop}\label{first:two}
Assume $f\prec g \in \MBF^+(n)$ with $n=1$ or $n=2$. Then the pair $(f,g)$ is $\Sigma$-realizable. 
\end{prop}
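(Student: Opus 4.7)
The plan is to combine the bijective reduction $\eta$ with the classical threshold-function characterization of small monotone Boolean functions, collapsing the problem to a finite enumeration. By Remark~\ref{rem:realizabilitySpecialCases} we may assume $f,g \in \MBF^+(n)$, so by Corollary~\ref{proofOfRows1and2} the claim that $(f,g)$ is $\Sigma$-jointly realizable reduces to showing that $h := \eta(f,g) \in \MBF^+(n+1)$ is $\Sigma$-realizable. Since $\eta$ is a bijection onto $\MBF^+(n+1)$, this in turn reduces to proving that \emph{every} $h \in \MBF^+(n+1)$ is $\Sigma$-realizable for $n+1 \in \{2,3\}$. By Lemma~\ref{lem:SigmaIsThreshold}(2) it therefore suffices to exhibit, for each such $h$, a separating structure $(a_1,\ldots,a_{n+1},\theta')$ with $a_i \geq 0$ and $\theta' > -(n+1)$.

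For $n = 1$ the target space $\MBF^+(2)$ contains exactly six functions: the two constants, the two projections $y_1, y_2$, the conjunction $y_1 \wedge y_2$, and the disjunction $y_1 \vee y_2$. Each admits a trivial separating structure, for instance $(1,1,\tfrac{3}{2})$ for $y_1 \wedge y_2$, $(1,1,\tfrac{1}{2})$ for $y_1 \vee y_2$, $(1,0,\tfrac{1}{2})$ for $y_1$, $(0,0,\tfrac{1}{2})$ for the constant $0$, and $(0,0,-\tfrac{1}{2})$ for the constant $1$. In every case the coefficients are non-negative and $\theta' > -2$, so Lemma~\ref{lem:SigmaIsThreshold}(2) produces the required $\Sigma$-realization.

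For $n = 2$ the target space $\MBF^+(3)$ has exactly $20$ elements, and the substance of the proof is the classical theorem of Chow~\cite{chow1961boolean} and Elgot~\cite{elgot1961truth} that every monotone Boolean function in at most three variables is a threshold function. One verifies this by exhibiting a separating structure for each of the $20$ functions; the most delicate case is the majority function $y_1 y_2 \vee y_1 y_3 \vee y_2 y_3$, which is separated by $(1,1,1,\tfrac{3}{2})$, and the remaining nineteen (various AND/OR combinations of one, two, or three variables, plus constants) are handled by analogous small-integer choices. In every instance the coefficients are non-negative and the threshold exceeds $-3$, so Lemma~\ref{lem:SigmaIsThreshold}(2) again applies.

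The main obstacle, if any, is the existence of non-threshold monotone Boolean functions on four variables---canonically $(y_1 \wedge y_2) \vee (y_3 \wedge y_4)$, whose non-separability follows by summing the four necessary inequalities for the false points $\{1010,1001,0110,0101\}$ against the two necessary inequalities for the true points $\{11{*}{*},{*}{*}11\}$. This obstruction is exactly what makes the analogue of the proposition fail at $n = 3$ and forces the strict containment $\Sigma \subsetneq \Pi\Sigma$ recorded in the third row of Table~\ref{tableOfResults}. For $n \leq 2$ no such obstruction is available, and the proof reduces to the routine case-checking above.
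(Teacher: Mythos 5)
Your proof is correct and takes essentially the same route as the paper's: reduce $\Sigma$-joint realizability of $(f,g)$ to $\Sigma$-realizability of $h=\eta(f,g)\in\MBF^+(n+1)$ via Corollary~\ref{proofOfRows1and2}, then verify by finite enumeration that every function in $\MBF^+(2)$ and $\MBF^+(3)$ is a threshold function and invoke Lemma~\ref{lem:SigmaIsThreshold}(2). If anything, you are slightly more careful than the paper at one corner case: the paper asserts separating structures with $\theta>0$, which is impossible for the constant function $1$ (it forces $\theta'<0$), whereas your choice $\theta'=-\tfrac{1}{2}>-n$ correctly meets the actual hypothesis $\theta'>-n$ of Lemma~\ref{lem:SigmaIsThreshold}(2).
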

\begin{proof}
By simple enumeration, one can check that, for $n = 1,2,3$, all functions in $\MBF^+(n)$ are threshold functions, and admit separating structures with $a_1,\dots,a_n,\theta >0$. Via Lemma~\ref{lem:SigmaIsThreshold}, these functions are $\Sigma$-realizable. This fact, when combined with Corollary~\ref{proofOfRows1and2}, proves the first two rows of Table~\ref{tableOfResults}. 
 \end{proof}

%%%%%%%%%%%%%%%%%%%%%%%%%%%%%%%%%%%%%%%%%%%%%%%%%%%

%%%%%%%%%%%%%%%%%%%%%%%%%%%%%%%%%%%%%%%%%%%%%%%%%NEW 

\subsection{Strict subset relations in Table~\ref{tableOfResults}.} This section contains a series of examples illustrating the differences between $\Sigma$, $\Pi\Sigma$, and $\Sigma\Pi\Sigma$ realizability, proving some of the strict subset results in Table~\ref{tableOfResults}. We will use Theorems~\ref{thm:factorTermComparable} and~\ref{thm:extendedEmbedding} extensively. 

The idea behind all of the examples is to show that there exists a pair of $\ast$-jointly realizable functions $(f,g)$ that are not $\ast'$-jointly realizable, where $\ast'$ is a more restrictive class than $\ast$. The proofs are inductive, with different base case constructions and very similar inductive steps. The methodology for the induction is to take an $(f,g)$ $\ast$-jointly realizable, but not $\ast'$-jointly realizable, pair in $\B^n$ and to set $(f,g)$ to be the floors of new $(\tilde f, \tilde g)$ functions in $\B^{n+1}$. It then remains to construct ceilings that ensure $\tilde f, \tilde g \in \MBF^+(n+1)$. For any $\vec y \in \bC_{n+1}$, we choose to set $\tilde f(\vec y) = \tilde g(\vec y) = 1$. By this choice, 
\[ \Truth(\tilde f) \supseteq \bC_{n+1}, \quad \Truth(\tilde g) \supseteq \bC_{n+1},\] which ensures that $\tilde f, \tilde g \in \MBF^+(n+1)$. By the contrapositive of Theorem~\ref{thm:extendedEmbedding} (a), this construction ensures that $(\tilde f, \tilde g)$ are not $\ast'$-jointly realizable. We then show that $(\tilde f, \tilde g)$ are $\ast$-jointly realizable.

%%%%%%%%%%%%%%%%%%%%%%%%%%
\subsubsection{$\Sigma$-jointly realizable $\subsetneq$ $\Pi\Sigma$-jointly realizable for $n\geq 3$.}\label{subsection:SigmaSubsetPiSigma}

\begin{figure}[h]
\begin{minipage}{.49\textwidth}
\centering
\begin{tikzpicture}[rotate around x=0, rotate around y=0, rotate around z=0, main node/.style={circle, draw, thick, inner sep=1pt, minimum size=0pt}, scale=2.8, node distance=1cm,z={(60:-0.5cm)}]
\tikzstyle{every loop}=[looseness=14]

\node[main node, fill=darkgray] (000) at (0,0,0) {$000$} ;
\node[main node, fill=darkgray] (001) at (0,0,1) {$001$} ;
\node[main node, fill=lightgray] (010) at (0,1,0) {$010$} ;
\node[main node, fill=lightgray] (100) at (1,0,0) {$100$} ;
\node[main node, fill=lightgray] (110) at (1,1,0) {$110$} ;
\node[main node, fill=white] (101) at (1,0,1) {$101$} ;
\node[main node, fill=white] (011) at (0,1,1) {$011$} ;
\node[main node, fill=white] (111) at (1,1,1) {$111$} ;

\draw[very thick,-,shorten >= 0pt,shorten <= 0pt]
(001) edge[] (101)
(001) edge[] (000)
(000) edge[] (100)
(101) edge[] (100)
(011) edge[] (010)
(010) edge[] (110)
(001) edge[] (011)
(000) edge[] (010)
(100) edge[] (110)
(111) edge[] (110)

(011) edge[line width=0.1cm, white] (111)
(011) edge[] (111)
(101) edge[line width=0.1cm, white] (111)
(101) edge[] (111);
\end{tikzpicture}
\end{minipage}
\hfill
\begin{minipage}{.49\textwidth}
\centering
\begin{tabular}{|c|c|c|} \hline
$y_1y_2y_3$ & $\phi(y_1y_2y_3)$ & $\Lambda(\phi(y_1y_2y_3))$  \\ \hline \rowcolor{darkgray}
 000 & (1,1,1) &  2 \\ \rowcolor{darkgray}
 001 & (1,1,2) &  4 \\ \rowcolor{lightgray}
 100 & (4,1,1) &  5 \\\rowcolor{lightgray}
 010 & (1,4,1) &  5 \\\rowcolor{lightgray}
 110 & (4,4,1) &  8 \\
 101 & (4,1,2) & 10 \\
 011 & (1,4,2) & 10 \\
 111 & (4,4,2) & 16 \\ \hline
\end{tabular}
\end{minipage}
\caption{Left: An example pair $f, g: \B^3 \to \B$ with $f \prec g$. Dark grey is $\False(f) \cap \False(g)$, light grey is $\False(f) \cap \Truth(g)$, and white is $\Truth(f) \cap \Truth(g)$. Nodes are labels with $y_1y_2y_3$. The pair $(f,g)$ are $\Pi\Sigma$-jointly realizable, but not $\Sigma$-jointly realizable. Right: A table of values proving $(f,g)$ are $\Pi\Sigma$-jointly realizable. Here $\Lambda = (z_1+z_2)z_3$ and $\theta_1=4.5$, $\theta_2=9$. The coloring in the table column is consistent with vertex coloring on the left.} \label{fig:linearSubsetDSGRN}
\end{figure}

In this section we prove the first strict inclusion in the third and fourth rows of  Table~\ref{tableOfResults}.

\begin{lem}\label{lem:sigmavspisigma}
Let $n\geq 3$. There exists a pair $f\prec g \in \MBF^+(n)$ such that $(f,g)$ is not $\Sigma$-jointly realizable, but is $\Pi\Sigma$-jointly realizable.
\end{lem}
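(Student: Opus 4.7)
The plan is to proceed by induction on $n$, with the base case $n=3$ supplied by the explicit pair in Figure~\ref{fig:linearSubsetDSGRN} and the inductive step implemented by the lifting procedure described at the start of this subsection.

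For the base case, I take $\Truth(f) = \{101, 011, 111\}$ and $\Truth(g) = \{010, 100, 110, 101, 011, 111\}$; both are upper sets, so $f, g \in \MBF^+(3)$, and $\Truth(f)\subseteq \Truth(g)$ gives $f \prec g$. The $\Pi\Sigma$-joint realization is precisely the one tabulated in Figure~\ref{fig:linearSubsetDSGRN}: with $\Lambda = (z_1+z_2)z_3$, $\phi_1(0)=\phi_2(0)=\phi_3(0) = 1$, $\phi_1(1)=\phi_2(1) = 4$, $\phi_3(1) = 2$, and thresholds $\theta_g = 4.5 < \theta_f = 9$, the table shows every row lands on the correct side of both thresholds. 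For the non-$\Sigma$-joint realizability I invoke Theorem~\ref{thm:factorTermComparable} contrapositively. Each of $f$ and $g$ depends essentially on all three coordinates (for instance $f(001)=0\neq f(101)=1$, $f(001)=0\neq f(011)=1$, $f(110)=0\neq f(111)=1$), so in any hypothetical $\Sigma$-joint realization $\Lambda$ must contain $z_1,z_2,z_3$, each appearing as a simple term. Theorem~\ref{thm:factorTermComparable} then demands that for every $\ell\in\{1,2,3\}$ the sets $\ttCol_\ell(\Truth(f)|_{\bC_\ell})$ and $\ttCol_\ell(\Truth(g)|_{\bF_\ell})$ be $\subseteq$-comparable; but for $\ell=1$ these are $\{01, 11\}$ and $\{10, 11\}$, which are incomparable, giving the contradiction.

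For the inductive step, suppose for some $n\geq 3$ there is a pair $(f,g)$ in $\MBF^+(n)$ that is $\Pi\Sigma$-jointly realized by some $(\Lambda\circ\phi, \bRN)$ with thresholds $\theta_g < \theta_f$ and $\phi_i(0) > 0$ for all $i$, but is not $\Sigma$-jointly realizable. Define $\tilde f, \tilde g \in \MBF^+(n+1)$ by identifying $\bF_{n+1}$ with $\B^n$ via $\ttCol_{n+1}$ and placing $f,g$ on the floor, while declaring $\tilde f \equiv \tilde g \equiv 1$ on $\bC_{n+1}$; positivity, monotonicity, and $\tilde f \prec \tilde g$ are immediate. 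If $(\tilde f, \tilde g)$ were $\Sigma$-jointly realizable, Theorem~\ref{thm:extendedEmbedding}(a) applied with $\ell = n+1$ and $U = \bF_{n+1}$ would hand back a $\Sigma$-joint realization of $(f, g)$, contradicting the inductive hypothesis. To realize $(\tilde f, \tilde g)$ in $\Pi\Sigma$, I set $\Lambda' = z_{n+1}\cdot\Lambda$, $\phi'_i = \phi_i$ for $i\leq n$, $\phi'_{n+1}(0) = 1$, and $\phi'_{n+1}(1) = M$ for $M$ to be chosen, retaining the thresholds $\theta_f, \theta_g$. On $\bF_{n+1}$ we have $\Lambda'\circ\phi' = \Lambda\circ\phi$, recovering $(f,g)$; on $\bC_{n+1}$ we have $\Lambda'\circ\phi' = M\cdot(\Lambda\circ\phi) \geq Mc$ for some $c > 0$ inherited from $\phi_i(0) > 0$ (the factors of $\Lambda\circ\phi$ are bounded below by a positive constant on $\B^n$), so choosing $M > \theta_f/c$ drives every point of $\bC_{n+1}$ above $\theta_f$ and makes $\tilde f = \tilde g = 1$ there as required. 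Since $\phi'_{n+1}(0) = 1 > 0$, the inductive invariant is preserved.

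The only step requiring genuine input is the base-case obstruction: producing a concrete pair whose collapse in some direction $\ell$ violates the comparability forced by Theorem~\ref{thm:factorTermComparable} while still admitting a bona fide product-of-sums separation. Once the example is in hand, the induction is routine, because appending a multiplicative factor $z_{n+1}$ keeps the interaction function in $\Pi\Sigma$, preserves positivity of $\phi'_i(0)$, and cleanly pushes every ceiling point above the larger threshold without disturbing the floor.
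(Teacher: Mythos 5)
Your proof is correct and follows essentially the same path as the paper's: the identical base-case pair realized by $\Lambda = (z_1+z_2)z_3$ and ruled out of $\Sigma$-joint realizability via the incomparable collapses in direction $1$ and Theorem~\ref{thm:factorTermComparable}, then the same inductive lift that sets the ceiling identically to $1$, appends the multiplicative factor $z_{n+1}$ with a large value $\phi'_{n+1}(1)$, and invokes the contrapositive of Theorem~\ref{thm:extendedEmbedding}(a). Your added observation that $f$ and $g$ depend essentially on every coordinate (so each $z_i$ genuinely appears, and hence appears as a simple term, even though the class $\boldsymbol\Sigma(n)$ permits variables to be omitted) is a small refinement of the paper's blanket assertion rather than a different argument.
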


\begin{proof}
We first construct an explicit pair $(f,g)$ for $n=3$. Consider the pair $f\prec g$ of MBFs depicted on the left of Figure~\ref{fig:linearSubsetDSGRN}. We use Theorem~\ref{thm:factorTermComparable} to show that $(f,g)$ is not $\Sigma$-jointly realizable, and provide an explicit $(\Lambda\circ\phi,$ \textbf{RN}) that $\Pi\Sigma$-jointly realizes $(f,g)$. Choose any \textbf{RN} with a node with three sources and two targets, with threshold values to be determined.

First, we illustrate the use of Theorem~\ref{thm:factorTermComparable}. Observe that $\Truth(f)|_{\bC_1} = \{101,111\}$, and so $\ttCol_1(\Truth(f)|_{\bC_1})= \{01,11\}$. Similarly $\Truth(g)|_{\bF_1} = \{010,011\}$, so $\ttCol_1(\Truth(g)|_{\bF_1}) = \{10,11\}$. Therefore, we can see that
\[
\ttCol_1(\Truth(f)|_{\bC_1}) \nsubseteq \ttCol_1(\Truth(g)|_{\bF_1}) \text{ and } \ttCol_1(\Truth(f)|_{\bC_1}) \nsupseteq \ttCol_1(\Truth(g)|_{\bF_1}) \ .
\]
By the contrapositive of Theorem~\ref{thm:factorTermComparable}, we see that if $(\Lambda\circ\phi,$ \textbf{RN}) $\ast$-jointly realizes $(f,g)$, then $\Lambda$ cannot have a simple term or factor $z_1$. Therefore, $(f,g)$ is not $\Sigma$-jointly realizable, as any $\Sigma$-interaction function $\Lambda$ has every variable as a simple term.

However, $(f,g)$ is $\Pi\Sigma$-jointly realizable. To see this, set $\Lambda = (z_1+z_2)z_3$, $\phi_1(0)=\phi_2(0)=\phi_3(0)=1$, $\phi_1(1)=4$, $\phi_2(1)=4.1$, $\phi_3(1)=2$, and $\theta_1=4.5$, $\theta_2=9$. The results of such an assignment are displayed in the table in Figure~\ref{fig:linearSubsetDSGRN}.

We now prove the inductive step. Let $n\geq 3$. Assume there exists $f,g : \B^n \to \B$ such that $(f,g)$ is not $\Sigma$-jointly realizable, but is $\Pi\Sigma$-jointly realizable. Let$(\Lambda\circ\phi,$ \textbf{RN}) with thresholds $\theta_1$ and $\theta_2$ $\Pi\Sigma$-jointly realize $(f,g)$. 
Now over $\B^{n+1}$ define 
\[
\tilde f(y_1\dots y_{n+1}) := \begin{cases}
f(y_1\dots y_n) & y_{n+1}=0 \\
1 & y_{n+1}=1
\end{cases}
\]

\[
\tilde g(y_1\dots y_{n+1}) := \begin{cases}
g(y_1\dots y_n) & y_{n+1}=0 \\
1 & y_{n+1}=1
\end{cases}
\]
Observe that 

\[
  \ttCol_{n+1}\left(\Truth(\tilde f)|_{\bF_{n+1}}\right)=\Truth(f), \quad \ttCol_{n+1}\left(\Truth(\tilde g)|_{\bF_{n+1}}\right)=\Truth(g),
  \]
  in other words the floor of $\tilde f$ has the same truth set as $f$ and the floor of $\tilde g$ has the same truth set as $g$. Since $(f,g)$ are not $\Sigma$-jointly realizable, the contrapositive of Theorem~\ref{thm:extendedEmbedding} (a) tells us that $(\tilde f, \tilde g)$ are not $\Sigma$-jointly realizable. Let 
$m = \min\{\Lambda(\phi(\B^n))\}$ and define $\tilde \phi_{n+1} (0) =1$ and $\tilde \phi_{n+1} (1)=\max\{2,C\}$, where $C$ is large enough such that $mC > \max\{\theta_1,\theta_2\}$.
Define $\tilde \Lambda = z_{n+1} \Lambda$ and $\tilde \phi:=(\phi_1 , \dots, \phi_n, \tilde \phi_{n+1})$. Then 
$\tilde \Lambda$ is a valid $\Pi\Sigma$-interaction function, and $(\tilde\Lambda\circ\tilde\phi,\mbox{\textbf{RN}})$ $\Pi\Sigma$-jointly realizes $(\tilde f, \tilde g)$, completing the proof.
\end{proof}
%%%%%%%%%%%%%%%%%%%%%%%%%
\subsubsection{$\Pi\Sigma$-jointly realizable $\subsetneq$ $\Sigma\Pi\Sigma$-jointly realizable for $n\geq 3$}\label{subsection:PiSigmaSubsetSigmaPiSigma}

In this section we prove the second strict inclusion in the third and fourth  rows of  Table~\ref{tableOfResults}.

\begin{figure}[h]
  \begin{minipage}{.49\textwidth}
  \centering
  %%%%%%%%%%%%%%%%%%%%%%%%%%%%%%
  \begin{tikzpicture}[rotate around x=0, rotate around y=0, rotate around z=0, main node/.style={circle, draw, thick, inner sep=1pt, minimum size=0pt}, scale=2.8, node distance=1cm,z={(60:-0.5cm)}]
  \tikzstyle{every loop}=[looseness=14]
  
  \node[main node, fill=darkgray] (000) at (0,0,0) {$000$} ;
  \node[main node, fill=lightgray] (001) at (0,0,1) {$001$} ;
  \node[main node, fill=darkgray] (010) at (0,1,0) {$010$} ;
  \node[main node, fill=darkgray] (100) at (1,0,0) {$100$} ;
  \node[main node, fill=white] (110) at (1,1,0) {$110$} ;
  \node[main node, fill=lightgray] (101) at (1,0,1) {$101$} ;
  \node[main node, fill=lightgray] (011) at (0,1,1) {$011$} ;
  \node[main node, fill=white] (111) at (1,1,1) {$111$} ;
  
  \draw[very thick,-,shorten >= 0pt,shorten <= 0pt]
  (001) edge[] (101)
  (001) edge[] (000)
  (000) edge[] (100)
  (101) edge[] (100)
  (011) edge[] (010)
  (010) edge[] (110)
  (001) edge[] (011)
  (000) edge[] (010)
  (100) edge[] (110)
  (111) edge[] (110)
  
  (011) edge[line width=0.1cm, white] (111)
  (011) edge[] (111)
  (101) edge[line width=0.1cm, white] (111)
  (101) edge[] (111);
  \end{tikzpicture}
  %%%%%%%%%%%%%%%%%%%%%%%%%%%%%%
  \end{minipage}
  \hfill
  \begin{minipage}{.49\textwidth}
  \centering
  \begin{tabular}{|c|c|c|} \hline
  $y_1y_2y_3$ & $\phi(y_1y_2y_3)$ & $\Lambda(\phi(y_1y_2y_3))$  \\ \hline \rowcolor{darkgray}
   000 & (1,1,1) & 2 \\ \rowcolor{darkgray}
   100 & (3,1,1) & 4 \\ \rowcolor{darkgray}
   010 & (1,3.1,1) & 4.1 \\ \rowcolor{lightgray}
   001 & (1,1,4) & 5 \\\rowcolor{lightgray}
   101 & (3,1,4) & 7 \\\rowcolor{lightgray}
   011 & (1,3.1,4) & 7.1 \\ 
   110 & (3,3.1,1) &  10.3 \\
   111 & (3,3.1,4) &  13.3 \\ \hline
  \end{tabular}
  \end{minipage}
  \caption{An example pair $f, g: \B^3 \to \B$ with $f \prec g$. Dark grey is $\False(f) \cap \False(g)$, light grey is $\False(f) \cap \Truth(g)$, and white is $\Truth(f) \cap \Truth(g)$. Nodes are labels with $y_1y_2y_3$. The pair $(f,g)$ is $\Sigma\Pi\Sigma$-jointly realizable, but not $\Pi\Sigma$-jointly realizable. Right: A table of values proving $(f,g)$ are $\Sigma\Pi\Sigma$-jointly realizable. Here $\Lambda = z_1z_2 + z_3$ and and $\theta_1=4.5$, $\theta_2=9$. The coloring in the rightmost column is consistent with vertex coloring on the left.} \label{fig:DSGRNSubseteDSGRN}
  \end{figure}
  
\begin{lem}\label{lem:DSGRNSubseteDSGRN}
Let $n\geq 3$. There exists a pair $f\prec g \in \MBF^+(n)$ such that $(f,g)$ is not $\Pi\Sigma$-jointly realizable, but is $\Sigma\Pi\Sigma$-jointly realizable.
\end{lem}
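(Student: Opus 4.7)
The plan mirrors Lemma~\ref{lem:sigmavspisigma}: exhibit an explicit pair on $\B^3$ and then lift by induction. For the base case $n=3$ I take the pair depicted in Figure~\ref{fig:DSGRNSubseteDSGRN}, namely $\Truth(f) = \{110,111\}$ and $\Truth(g) = \{001,011,101,110,111\}$. The $\Sigma\Pi\Sigma$-joint realization is given explicitly by $\Lambda = z_1 z_2 + z_3$ with $\phi_i(0) = 1$, $\phi_1(1) = 3$, $\phi_2(1) = 3.1$, $\phi_3(1) = 4$, and thresholds $\theta_g = 4.5$, $\theta_f = 9$; the table in the figure verifies directly that $\Lambda \circ \phi$ straddles each threshold on the required side.

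The substance is ruling out $\Pi\Sigma$-joint realizability. First I would apply Theorem~\ref{thm:factorTermComparable} coordinate by coordinate: direct computation gives $\ttCol_1(\Truth(f)|_{\bC_1}) = \{10,11\}$ and $\ttCol_1(\Truth(g)|_{\bF_1}) = \{01,11\}$, which are incomparable, and an analogous incomparable pair arises in direction $2$. Hence in any $\Pi\Sigma$-realization $(\Lambda \circ \phi, \textbf{RN})$ of $(f,g)$, neither $z_1$ nor $z_2$ can be a factor or a simple term of $\Lambda$. Enumerating the partitions of $\{1,2,3\}$ underlying a $\Pi\Sigma$-form, the only one compatible with this restriction is $\Lambda = (z_1 + z_2) z_3$. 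Suppose such a realization exists with $a_i := \phi_i(0) < \phi_i(1) =: b_i$ and thresholds $\theta_g < \theta_f$. From $g(010)=0<1=g(001)$ we get $(a_1+b_2)a_3 < (a_1+a_2)b_3$, i.e.\ $b_2 a_3 - a_2 b_3 < a_1(b_3 - a_3)$. From $f(101)=0<1=f(110)$ we get $(b_1+a_2)b_3 < (b_1+b_2)a_3$, i.e.\ $b_1(b_3 - a_3) < b_2 a_3 - a_2 b_3$. Chaining the two and dividing by $b_3 - a_3 > 0$ yields $b_1 < a_1$, contradicting $a_1 < b_1$.

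The inductive step copies the template from Lemma~\ref{lem:sigmavspisigma}. Given $(f,g)$ on $\B^n$ witnessing the lemma with $\Sigma\Pi\Sigma$-realization $(\Lambda\circ\phi,\textbf{RN})$, define $\tilde f, \tilde g$ on $\B^{n+1}$ by $\tilde f(\vec y,0)=f(\vec y)$, $\tilde g(\vec y,0)=g(\vec y)$, and $\tilde f \equiv \tilde g \equiv 1$ on $\bC_{n+1}$. Then $\tilde f, \tilde g \in \MBF^+(n+1)$ with $\tilde f \prec \tilde g$, and the $\bF_{n+1}$-collapse of $(\tilde f, \tilde g)$ is precisely $(f,g)$, so the contrapositive of Theorem~\ref{thm:extendedEmbedding}(a) forces $(\tilde f, \tilde g)$ to fail $\Pi\Sigma$-joint realizability. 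For the $\Sigma\Pi\Sigma$-realization, set $\tilde\Lambda := z_{n+1}\Lambda$; distributing $z_{n+1}$ over the outer sum of $\Lambda$ keeps $\tilde\Lambda \in \boldsymbol\Sigma\boldsymbol\Pi\boldsymbol\Sigma(n+1)$. Choosing $\tilde\phi_{n+1}(0) = 1$ and $\tilde\phi_{n+1}(1) = C$ with $C\cdot\min\Lambda(\phi(\B^n)) > \max\{\theta_f,\theta_g\}$ drives $\tilde\Lambda \circ \tilde\phi$ above both thresholds on the ceiling while preserving the floor behaviour, completing the induction. The main obstacle I anticipate is the base case: correctly enumerating the $\Pi\Sigma$-forms left after the Theorem~\ref{thm:factorTermComparable} cull and carrying out the inequality chase cleanly without assuming anything stronger than $\phi_i(1) > \phi_i(0) \geq 0$.
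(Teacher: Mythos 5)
Your base case is correct and is essentially the paper's own argument: the same pair from Figure~\ref{fig:DSGRNSubseteDSGRN}, the same use of Theorem~\ref{thm:factorTermComparable} in directions $1$ and $2$ to cull every $\Pi\Sigma$ form except $(z_1+z_2)z_3$, and an inequality chase that is the mirror image of the paper's --- you chain the inequalities coming from $g(010)<g(001)$ and $f(101)<f(110)$ to force $\phi_1(1)<\phi_1(0)$, while the paper chains the other symmetric pair to force $\phi_2(1)<\phi_2(0)$; both are valid, and the paper itself notes the second pair gives a similar contradiction.

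The genuine gap is in your inductive step: you set $\tilde\Lambda := z_{n+1}\Lambda$ and assert that distributing $z_{n+1}$ over the outer sum keeps $\tilde\Lambda \in \boldsymbol\Sigma\boldsymbol\Pi\boldsymbol\Sigma(n+1)$. It does not. In the class defined in~\eqref{big:class} the blocks $W_k$ \emph{partition} the input set, so each variable occurs exactly once in the expression, whereas after distribution $z_{n+1}$ occurs in every summand. Concretely, already at the first step of your induction $\Lambda = z_1z_2+z_3$ and $z_4\Lambda = z_1z_2z_4+z_3z_4$: the two monomials share $z_4$, so they would have to arise from a single block $W_k$, but a product of sums $\prod_j\left(\sum_{i\in V_{k,j}} z_i\right)$ expands into monomials all of the same degree $M_k$, while here the degrees are $3$ and $2$; since every coefficient in the class is positive there is no cancellation available, so $z_4(z_1z_2+z_3)$ admits no $\Sigma\Pi\Sigma$ representation at all (and left undistributed it is a depth-four $\Pi\Sigma\Pi\Sigma$ expression, also outside the class). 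Multiplying by a fresh variable is the right move one lemma earlier, in Lemma~\ref{lem:sigmavspisigma}, precisely because $\boldsymbol\Pi\boldsymbol\Sigma$ is closed under that operation; $\boldsymbol\Sigma\boldsymbol\Pi\boldsymbol\Sigma$ is instead closed under \emph{addition} of a fresh variable. The paper's inductive step accordingly takes $\tilde\Lambda := z_{n+1}+\Lambda$, with $\tilde\phi_{n+1}(0)=1$ and $\tilde\phi_{n+1}(1)=\max\{2,C\}$ where $m+C>\max\{\theta_f,\theta_g\}$ for $m=\min\{\Lambda(\phi(\B^n))\}$ (one should also shift the two thresholds by the additive constant $\tilde\phi_{n+1}(0)$ so that the strict floor inequalities persist). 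With that substitution, the rest of your step --- ceiling identically $1$, contrapositive of Theorem~\ref{thm:extendedEmbedding}(a) applied to the floor to rule out $\Pi\Sigma$-joint realizability --- goes through verbatim.
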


\begin{proof}
Again, we construct an explicit pair $(f,g)$ for $n=3$. Consider the pair $f \prec g$ shown in Figure~\ref{fig:DSGRNSubseteDSGRN}. We will show that $(f,g)$ is not $\Pi\Sigma$-jointly realizable. Suppose, by way of contradiction, that $(\Lambda\circ\phi,$ \textbf{RN}) $\Pi\Sigma$-jointly realizes $(f,g)$ for some \textbf{RN} with a node with three sources and two targets.

From Lemma~\ref{lem:realizable}, we can see that the only allowable $\Pi\Sigma$-interaction function for $n=3$ are
\[
z_1 + z_2 + z_3, \quad
(z_1 + z_2) z_3, \quad
(z_1 + z_3) z_2, \quad
(z_2 + z_3) z_1, \quad
z_1 z_2 z_3
\]
For $f \prec g$ in Figure~\ref{fig:DSGRNSubseteDSGRN}, observe that 
\begin{equation}\label{eq:problemDirectCube7_1}
\ttCol_1(\Truth(f)|_{\bC_1}) \nsubseteq \ttCol_1(\Truth(g)|_{\bF_1}) \text{ and } \ttCol_1(\Truth(f)|_{\bC_1}) \nsupseteq \ttCol_1(\Truth(g)|_{\bF_1}),
\end{equation}
and
\begin{equation}\label{eq:problemDirectCube7_2}
\ttCol_2(\Truth(f)|_{\bC_2}) \nsubseteq \ttCol_2(\Truth(g)|_{\bF_2}) \text{ and } \ttCol_2(\Truth(f)|_{\bC_2}) \nsupseteq \ttCol_2(\Truth(g)|_{\bF_2}).
\end{equation}
By Theorem~\ref{thm:factorTermComparable}, we see that $z_1$ and $z_2$ cannot be simple terms or factors of $\Lambda$. This constraint implies that $\Lambda = (z_1 + z_2) z_3$. The following inequality argument will show that this choice of $\Lambda$ is also impossible. To reduce notation, we will write $\phi_i(0)=\ell_i$ and $\phi_i(1)=u_i$.

We have the following relations from Figure~\ref{fig:DSGRNSubseteDSGRN}:
\begin{align*}
\Lambda(\phi(100)) & < \Lambda(\phi(001)) \quad (\text{dark grey} < \text{light grey})\\
\Lambda(\phi(010)) & < \Lambda(\phi(001)) \quad (\text{dark grey} < \text{light grey})\\
-&-\\
\Lambda(\phi(101)) & < \Lambda(\phi(110)) \quad (\text{light grey} < \text{white})\\
\Lambda(\phi(011)) & < \Lambda(\phi(110)) \quad (\text{light grey} < \text{white})
\end{align*}

Written in the language of $\ell$ and $u$ this means
\begin{align*}
(u_1+\ell_2)\ell_3 &< (\ell_1+\ell_2)u_3  \\
(\ell_1+u_2)\ell_3 &< (\ell_1+\ell_2)u_3  \\
(u_1+\ell_2)u_3 &< (u_1+u_2)\ell_3  \\
(\ell_1+u_2)u_3 &< (u_1+u_2)\ell_3  .
\end{align*}
We consider first and fourth  equation; the second and third together  lead to similar contradiction.
First equation:
\begin{align*}
(u_1+\ell_2)\ell_3 &< (\ell_1+\ell_2)u_3  \\
u_1\ell_3 +\ell_2\ell_3 &< \ell_1u_3 +\ell_2u_3  \\
u_1\ell_3 - \ell_1u_3 &< \ell_2(u_3 -\ell_3)  
\end{align*}
Fourth equation:
\begin{align*}
(\ell_1+u_2)u_3 &< (u_1+u_2)\ell_3  \\
\ell_1u_3 +u_2u_3 &< u_1\ell_3 +u_2\ell_3 \\
u_2(u_3-\ell_3)  &< u_1\ell_3 - \ell_1u_3
\end{align*}
Comparing the last line in each equation block we get 
\[ u_2(u_3-\ell_3) <  u_1\ell_3 - \ell_1u_3 <   \ell_2(u_3 -\ell_3), \]
which, after cancellation, gives 
\[ u_2 < \ell_2.\]
Therefore $\phi_2(0) > \phi_2(1)$, contradicting Lemma~\ref{lem:realizable}, so $\Lambda= (z_1+z_2)z_3$ is also impossible. Therefore, $(f,g)$ is not $\Pi\Sigma$-jointly realizable.

However, the pair $(f,g)$ is $\Sigma\Pi\Sigma$-jointly realizable. To see this, let $\Lambda = z_1z_2+z_3$, let $\phi_1(0) = \phi_2(0) = \phi_3(0) = 1$, $\phi_1(1) =3$,  $\phi_2(1) = 3.1$, and $\phi_3(1) = 4$, and let $\theta_1=4.5$, and $\theta_2=9$. Such an assignment is displayed in the table in Figure~\ref{fig:DSGRNSubseteDSGRN}.

We now prove the inductive step. Let $n\geq 3$. Assume there exists $f,g : \B^n \to \B$ such that $(f,g)$ is not $\Pi\Sigma$-jointly realizable, but is $\Sigma\Pi\Sigma$-jointly realizable. Let $(\Lambda\circ\phi,$ \textbf{RN}) with thresholds $\theta_1$ and $\theta_2$ $\Sigma\Pi\Sigma$-jointly realize $(f,g)$.
Define 
\[
\tilde f(y_1\dots y_{n+1}) := \begin{cases}
f(y_1\dots y_n) & y_{n+1}=0 \\
1 & y_{n+1}=1
\end{cases}
\]
\[
\tilde g(y_1\dots y_{n+1}) := \begin{cases}
g(y_1\dots y_n) & y_{n+1}=0 \\
1 & y_{n+1}=1
\end{cases}
\]
As in the proof of Lemma~\ref{lem:sigmavspisigma}, observe that 
\[
  \ttCol_{n+1}\left(\Truth(\tilde f)|_{\bF_{n+1}}\right)=\Truth(f), \quad \ttCol_{n+1}\left(\Truth(\tilde g)|_{\bF_{n+1}}\right)=\Truth(g),
  \]
  in other words the floor of $\tilde f$ has the same truth set as $f$ and the floor of $\tilde g$ has the same truth set as $g$. Since $(f,g)$ are not $\Pi\Sigma$-jointly realizable, the contrapositive of Theorem~\ref{thm:extendedEmbedding} (a) tells us that $(\tilde f, \tilde g)$ are not $\Pi\Sigma$-jointly realizable.
Let 
$m = \min\{\Lambda(\phi(\B^n))\}$. Define $\tilde \phi_{n+1} (0) :=1$ and $\tilde \phi_{n+1} (0) :=\max\{2,C\}$, where $C$ is large enough such that $m + C > \max\{\theta_1,\theta_2\}$.
Define $\tilde \Lambda := z_{n+1} + \Lambda$ and $\tilde \phi:=(\phi_1 , \dots, \phi_n, \tilde \phi_{n+1})$. Then 
$\tilde \Lambda$ is a valid $\Sigma\Pi\Sigma$-interaction function, and $(\tilde\Lambda\circ\tilde\phi,$ \textbf{RN})  $\Sigma\Pi\Sigma$-jointly realizes $(\tilde f, \tilde g)$, completing the proof.
\end{proof}

\subsubsection{$\Sigma\Pi\Sigma$-jointly realizable $\subsetneq$ $K$-jointly realizable for $n \geq 4$}\label{subsection:SigmaPiSigmaSubsetK}
In this section we prove the last strict inclusion in the fourth row of  Table~\ref{tableOfResults}.

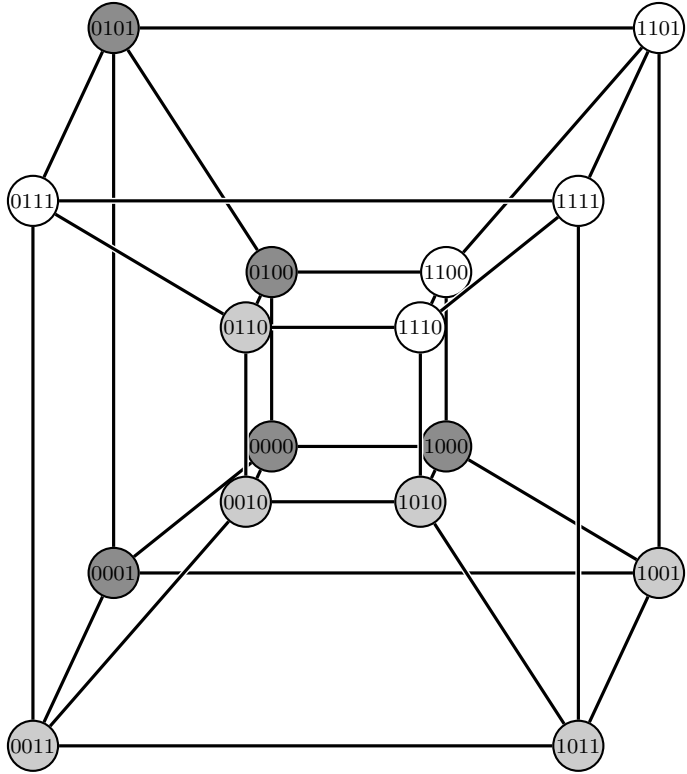
\begin{figure}
  \centering
  \begin{tikzpicture}[rotate around x=0, rotate around y=0, rotate around z=0, main node/.style={circle, draw, thick, inner sep=.5pt, minimum size=0pt}, scale=2.9, node distance=1cm,z={(65:-0.35cm)}]
  \tikzstyle{every loop}=[looseness=14]
  
  \node[main node, fill=darkgray] (0000) at (-.4,-.4,-.4) {\tiny $0000$} ;
  \node[main node, fill=darkgray] (0100) at (-.4,.4,-.4) {\tiny $0100$} ;
  \node[main node, fill=darkgray] (1000) at (.4,-.4,-.4) {\tiny $1000$} ;
  \node[main node, fill=white] (1100) at (.4,.4,-.4) {\tiny $1100$} ;
  
  \begin{pgfonlayer}{fg}
  \node[main node, fill=lightgray] (0010) at (-.4,-.4,.4) {\tiny $0010$} ;
  \node[main node, fill=lightgray] (1010) at (.4,-.4,.4) {\tiny $1010$} ;
  \node[main node, fill=lightgray] (0110) at (-.4,.4,.4) {\tiny $0110$} ;
  \node[main node, fill=white] (1110) at (.4,.4,.4) {\tiny $1110$} ;
  \end{pgfonlayer}
  
  \node[main node, fill=darkgray] (0001) at (-1.25,-1.25,-1.25) {\tiny $0001$} ;
  \node[main node, fill=darkgray] (0101) at (-1.25,1.25,-1.25) {\tiny $0101$} ;
  \node[main node, fill=lightgray] (1001) at (1.25,-1.25,-1.25) {\tiny $1001$} ;
  \node[main node, fill=white] (1101) at (1.25,1.25,-1.25) {\tiny  $1101$} ;
  
  \begin{pgfonlayer}{fg}
  \node[main node, fill=lightgray] (0011) at (-1.25,-1.25,1.25) {\tiny $0011$} ;
  \node[main node, fill=lightgray] (1011) at (1.25,-1.25,1.25) {\tiny $1011$} ;
  \node[main node, fill=white] (0111) at (-1.25,1.25,1.25) {\tiny $0111$} ;
  \node[main node, fill=white] (1111) at (1.25,1.25,1.25) {\tiny $1111$} ;
  \end{pgfonlayer}
  
  \draw[very thick,-,shorten >= 0pt,shorten <= 0pt]
  (0010) edge[] (1010)
  (0010) edge[] (0000)
  (0000) edge[] (1000)
  (1010) edge[] (1000)
  (0110) edge[] (0100)
  (0110) edge[] (1110)
  (0100) edge[] (1100)
  (1110) edge[] (1100)
  
  (0000) edge[] (0100)
  (1000) edge[] (1100)
  
  (0011) edge[] (0001)
  (0001) edge[] (1001)
  (1011) edge[] (1001)
  
  (0111) edge[] (0101)
  (0101) edge[] (1101)
  (1111) edge[] (1101)
  (0011) edge[] (0111)
  (0001) edge[] (0101)
  (1001) edge[] (1101)
  
  (0000) edge[] (0001)
  (1000) edge[] (1001)
  (0100) edge[] (0101)
  (1100) edge[] (1101)
  
  (0010) edge[line width=0.08cm, white] (0110)
  (0010) edge[] (0110)
  
  (1010) edge[line width=0.08cm, white] (1110)
  (1010) edge[] (1110)
  
  (0110) edge[line width=0.08cm, white] (1110)
  (0110) edge[] (1110)
  
  (0011) edge[line width=0.08cm, white] (1011)
  (0111) edge[line width=0.08cm, white] (1111)
  (1011) edge[line width=0.08cm, white] (1111)
  (0010) edge[line width=0.08cm, white] (0011)
  (1010) edge[line width=0.08cm, white] (1011)
  (0110) edge[line width=0.08cm, white] (0111)
  (1110) edge[line width=0.08cm, white] (1111)
  
  (0011) edge[] (1011)
  (0111) edge[] (1111)
  (1011) edge[] (1111)
  (0010) edge[] (0011)
  (1010) edge[] (1011)
  (0110) edge[] (0111)
  (1110) edge[] (1111)
  
  ;
  \end{tikzpicture}
  \caption{An example pair $f, g: \B^4 \to \B$ with $f \prec g$. Dark grey is $\False(f) \cap \False(g)$, light grey is $\False(f) \cap \Truth(g)$, and white is $\Truth(f) \cap \Truth(g)$. Nodes are labels with $y_1y_2y_3y_4$. The pair $(f,g)$ is $K$-jointly realizable, but not $\Sigma\Pi\Sigma$-jointly realizable. Visually, the inner cube is the floor in the fourth direction, and the outer cube is the ceiling in the fourth direction.}
  \label{fig:eDSGRNsubsetK}
  \end{figure}
  
\begin{lem}
Let $n\geq 4$. There exists a pair $f\prec g \in \MBF^+(n)$ such that $(f,g)$ is not $\Sigma\Pi\Sigma$-jointly realizable, but $(f,g)$ is $K$-jointly realizable. 
\end{lem}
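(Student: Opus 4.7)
The plan is to follow the template of Lemmas~\ref{lem:sigmavspisigma} and~\ref{lem:DSGRNSubseteDSGRN}: construct an explicit base case at $n=4$, then extend to larger $n$ by a floor/ceiling construction. For the base case, I would adopt the pair $f,g: \B^4\to\B$ depicted in Figure~\ref{fig:eDSGRNsubsetK}. A routine check confirms that $f, g \in \MBF^+(4)$ with $f \prec g$, so Theorem~\ref{thm:KCompatible} immediately yields the $K$-joint realizability of $(f,g)$.

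The main work is to rule out $\Sigma\Pi\Sigma$-joint realizability of $(f,g)$. I would first apply Theorem~\ref{thm:factorTermComparable} to each coordinate $\ell \in \{1,2,3\}$ by computing
\[
\ttCol_\ell(\Truth(f)|_{\bC_\ell}) \text{ and } \ttCol_\ell(\Truth(g)|_{\bF_\ell})
\]
and verifying that neither containment holds. This rules out $z_1, z_2, z_3$ as either simple terms or factors of any putative $\Sigma\Pi\Sigma$-interaction function $\Lambda$ realizing $(f,g)$. (Note that the coordinate $\ell = 4$ will not be excluded by this test, since one finds $\ttCol_4(\Truth(f)|_{\bC_4}) \subseteq \ttCol_4(\Truth(g)|_{\bF_4})$; accordingly, $z_4$ remains a candidate for factor or simple term.) Next I would enumerate the finitely many $\Sigma\Pi\Sigma$-forms in four variables where $z_1, z_2, z_3$ each appear only inside inner sums of size $\geq 2$, or inside outer products of size $\geq 2$, in a way that avoids both simple-term and factor configurations. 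For each surviving form I would derive a contradiction with the threshold inequalities forced by the coloring in Figure~\ref{fig:eDSGRNsubsetK}, written in terms of $\ell_i = \phi_i(0)$ and $u_i = \phi_i(1)$, in the same style as the inequality argument for $(z_1+z_2)z_3$ in the proof of Lemma~\ref{lem:DSGRNSubseteDSGRN}. The hard part will be this enumeration: $\Sigma\Pi\Sigma$ admits many more partition-of-partition structures than $\Pi\Sigma$, so one must catalog the candidates carefully — exploiting any available coordinate symmetries to collapse cases — before attacking each with the appropriate inequality. Where a direct inequality becomes unwieldy, I would instead apply Theorem~\ref{thm:extendedEmbedding} with $\ell = 4$ to reduce the problem to a three-variable pair $(f'_U, g'_U)$ realized by a common simpler $\Lambda'$, and dispose of those sub-cases on $\B^3$ by the same method.

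For the inductive step, I would imitate the extension argument of Lemma~\ref{lem:DSGRNSubseteDSGRN}. Given $(f,g) \in \MBF^+(n)\times\MBF^+(n)$ with $f \prec g$ that is $K$-jointly but not $\Sigma\Pi\Sigma$-jointly realizable, define $\tilde f, \tilde g \in \MBF^+(n+1)$ by $\tilde f|_{\bF_{n+1}} = f$, $\tilde g|_{\bF_{n+1}} = g$, and $\tilde f \equiv \tilde g \equiv 1$ on $\bC_{n+1}$. Then $\tilde f \prec \tilde g$, so Theorem~\ref{thm:KCompatible} yields $K$-joint realizability of $(\tilde f, \tilde g)$. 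Non-$\Sigma\Pi\Sigma$-joint realizability is preserved: since
\[
\ttCol_{n+1}(\Truth(\tilde f)|_{\bF_{n+1}}) = \Truth(f) \text{ and } \ttCol_{n+1}(\Truth(\tilde g)|_{\bF_{n+1}}) = \Truth(g),
\]
the contrapositive of Theorem~\ref{thm:extendedEmbedding}(a) applied with $\ell = n+1$, $U = \bF_{n+1}$ transfers the obstruction from $(f,g)$ to $(\tilde f, \tilde g)$, completing the induction.
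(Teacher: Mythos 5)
Your treatment of $K$-joint realizability (Theorem~\ref{thm:KCompatible}) and your inductive step are correct and match the paper's proof essentially verbatim, and your collapse computations on the base pair check out: in each of the directions $\ell=1,2,3$ neither containment of Theorem~\ref{thm:factorTermComparable} holds, while in direction $4$ one has $\ttCol_4(\Truth(f)|_{\bC_4}) \subseteq \ttCol_4(\Truth(g)|_{\bF_4})$, exactly as you say. The gap is in your primary plan for the base case: after excluding $z_1,z_2,z_3$ as simple terms and factors, a sizable catalog of four-variable $\Sigma\Pi\Sigma$-forms survives --- e.g.\ $(z_1+z_2)(z_3+z_4)$ and its two companions, $(z_1+z_2+z_3)z_4$, $z_1z_2+z_3z_4$ and its companions, $z_1z_2z_3+z_4$, and $(z_i+z_j)z_k+z_4$ for the three choices of $\{i,j,k\}=\{1,2,3\}$ --- each requiring its own bespoke inequality refutation, none of which you carry out. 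Moreover the pair has no coordinate symmetry to collapse these cases (note $0111\in\Truth(f)$ but $1011\notin\Truth(f)$, so even $y_1\leftrightarrow y_2$ fails), so the enumeration is genuinely about a dozen separate inequality arguments. As written, the heart of the lemma is deferred rather than proved.

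What the paper actually does is precisely your fallback, promoted to the whole argument, and it eliminates the four-variable enumeration entirely. Apply Theorem~\ref{thm:extendedEmbedding} with $\ell=4$: part (b) is the crucial lever, since it forces a \emph{single} common interaction function $\Lambda'$ on $\B^3$ realizing both the floor pair $(f'_{\bF_4},g'_{\bF_4})$ and the ceiling pair $(f'_{\bC_4},g'_{\bC_4})$. The floor pair is, by design, exactly the pair of Figure~\ref{fig:DSGRNSubseteDSGRN}, and the enumeration already done in Lemma~\ref{lem:DSGRNSubseteDSGRN} (all eight three-variable $\Sigma\Pi\Sigma$-forms, with \eqref{eq:problemDirectCube7_1}--\eqref{eq:problemDirectCube7_2} ruling out all but two, and the $\ell_i,u_i$ inequality argument killing $(z_1+z_2)z_3$) pins down $\Lambda' = z_1z_2+z_3$ uniquely. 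But for the ceiling pair one computes $\ttCol_3(\Truth(f'_{\bC_4})|_{\bC_3}) = \{01,11\}$ and $\ttCol_3(\Truth(g'_{\bC_4})|_{\bF_3}) = \{10,11\}$, so neither containment holds and Theorem~\ref{thm:factorTermComparable} forbids $z_3$ from being a simple term or factor of $\Lambda'$ --- contradicting $\Lambda'=z_1z_2+z_3$. I recommend you restructure the base case around this two-collapse argument and drop the direct enumeration; with that change your proof is complete and coincides with the paper's.
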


\begin{proof}
Recall that any pair $f\prec g \in \MBF^+(n)$ is $K$-jointly realizable by Theorem~\ref{thm:KCompatible}. It remains to construct an example that is not $\Sigma\Pi\Sigma$-jointly realizable and apply induction. 
We construct an explicit pair for $n=4$. Consider the pair $(f,g)$ in Figure~\ref{fig:eDSGRNsubsetK}. Observe that the floor (inner cube) has the same truth set as the cube in Figure~\ref{fig:DSGRNSubseteDSGRN}. In Lemma~\ref{lem:DSGRNSubseteDSGRN} we showed that this pair can be realized by the $\Sigma\Pi\Sigma$-interaction function $\Lambda= z_1z_2 + z_3$. It turns out this is the only valid $\Sigma\Pi\Sigma$-interaction function that can realize the pair. To see this, we first list all possible $\Sigma\Pi\Sigma$-interaction functions for $n=3$, which are
\[
\begin{matrix}
z_1 + z_2 + z_3 &
(z_1 + z_2) z_3 &
(z_1 + z_3) z_2 &
(z_2 + z_3) z_1 \\ 
z_1 z_2 z_3 &
z_1z_2 + z_3 &
z_1z_3 + z_2 &
z_2z_3 + z_1
\end{matrix}
\]
Via Equations~\eqref{eq:problemDirectCube7_1} and \eqref{eq:problemDirectCube7_2} we can rule out all but $z_1z_2 + z_3$ and $(z_1+z_2)z_3$. In addition, Lemma~\ref{lem:DSGRNSubseteDSGRN} showed that $\Lambda = (z_1+z_2)z_3$ also does not work. Therefore, we can make the following claim: if $(\Lambda\circ \phi,$ \textbf{RN}) $\Sigma\Pi\Sigma$-jointly realizes the pair of MBFs from Figure~\ref{fig:eDSGRNsubsetK}, then $\Lambda = z_1z_2 + z_3$. 

Suppose that $f\prec g \in \MBF^+(4)$ in Figure~\ref{fig:eDSGRNsubsetK} are $\Sigma\Pi\Sigma$-jointly realizable. Then by Theorem~\ref{thm:extendedEmbedding} the floor (inner cube) and ceiling (outer cube) pairs $(f'_{\bF_4}, g'_{\bF_4})$ and $(f'_{\bC_4}, g'_{\bC_4})$ defined rigorously in Theorem~\ref{thm:extendedEmbedding} are $\Sigma\Pi\Sigma$-jointly realizable, and there is a single $\Sigma\Pi\Sigma$-interaction function $\Lambda$ along with maps $\phi_{\bC_4},\phi_{\bF_4}$, realizing networks \textbf{RN}$'_{\bC_4}$, \textbf{RN}$'_{\bF_4}$, and thresholds $\theta_{\bC_4,1}, \theta_{\bC_4,2}, \theta_{\bF_4,1}, \theta_{\bF_4,2}$ such that $(\Lambda\circ \phi_{\bF_4},$ \textbf{RN}$'_{\bF_4}$) $\Sigma\Pi\Sigma$-jointly realizes $(f'_{\bF_4}, g'_{\bF_4})$ and $(\Lambda\circ \phi_{\bC_4},$ \textbf{RN}$'_{\bC_4}$) $\Sigma\Pi\Sigma$-jointly realizes $(f'_{\bC_4}, g'_{\bC_4})$. By our above claim, we know $\Lambda = z_1z_2+z_3$. However, consider $(f'_{\bC_4}, g'_{\bC_4})$. By inspection, we see that
\[ \ttCol_3(\Truth(f'_{\bC_4})|_{\bC_3}) \nsubseteq \ttCol_3(\Truth(g'_{\bC_4})|_{\bF_3})
\] \[
\ttCol_3(\Truth(f'_{\bC_4})|_{\bC_3}) \nsupseteq \ttCol_3\Truth(g'_{\bC_4})|_{\bF_3}) \]
By Theorem~\ref{thm:factorTermComparable} we know that if $ (\Lambda \circ \phi_{\bC_4},$ \textbf{RN}$'_{\bC_4}$) $\Sigma\Pi\Sigma$-jointly  realizes $(f'_{\bC_4}, g'_{\bC_4})$, then $z_3$ cannot be a factor or simple term of $\Lambda$. This contradicts our claim that $\Lambda = z_1z_2+z_3$, and so $(f,g)$ are not $\Sigma\Pi\Sigma$-jointly realizable. 

We now prove the inductive step. Let $n\geq 4$. Assume there exists $f,g : \B^n \to \B$ such that $(f,g)$ is not $\Sigma\Pi\Sigma$-jointly realizable, but is $K$-jointly realizable.
Define 
\[
\tilde f(y_1\dots y_{n+1}) := \begin{cases}
f(y_1\dots x_n) & y_{n+1}=0 \\
1 & y_{n+1}=1
\end{cases}
\]
\[
\tilde g(y_1\dots y_{n+1}) := \begin{cases}
g(y_1\dots y_n) & y_{n+1}=0 \\
1 & y_{n+1}=1
\end{cases}
\]
Observe that $\tilde f', \tilde g' \in \MBF^+(n+1)$ such that $\tilde f' \prec \tilde g'$.
Since $\tilde f'_{\bF_{n+1}}=f$ and $\tilde g'_{\bF_{n+1}}=g$, by Theorem~\ref{thm:extendedEmbedding} we know $(\tilde f, \tilde g)$ is not $\Sigma\Pi\Sigma$-jointly realizable. By Theorem~\ref{thm:KCompatible}, we know all pairs $f \prec g \in \MBF^+(n+1)$ are always $K$-jointly realizable.
\end{proof}

\subsection{$\Sigma\Pi\Sigma$-joint realizability = $K$-joint realizability for $n=3$}\label{SigmaPiSigmaEqualsK}

This is the final result remaining to be proven in Table~\ref{tableOfResults}. 
To find the total number of pairs $(f,g)$ such that $f,g \in \MBF^+(3)$ and $f \prec g$, we use the bijection given in Definition~\ref{defn:bijection}. 
The number of $K$-realizable pairs $(f,g)$ where $f\prec g \in \MBF^+(3)$ is the same as $\abs{\MBF^+(4)}$. In \cite{church1940nunmerical} it was found that $\abs{\MBF^+(4)} = 168$. We used the software DSGRN to find that $150$ MBFs in $\MBF^+(4)$ are $\Sigma$-realizable. The software was also used to computationally check that there are exactly 150 pairs $f\prec g \in \MBF^+(3)$ that are $\Sigma$-jointly realizable. We explicitly constructed the remaining $18$ pairs $(f_i\prec g_i), i = 1, \ldots 18$ that are provably not  $\Sigma$-joint realizable by applying Theorem~\ref{thm:factorTermComparable} to at least one direction in each case. These pairs are all presented in Appendix Table~\ref{table:problemCube} with the direction that allows application of Theorem~\ref{thm:factorTermComparable} indicated. Finally, in Appendix Table~\ref{table:realizeProblemCubes}, we provide specific realizing functions $\Lambda \circ \phi$ and realizing network thresholds $\theta_1,\theta_2$ that $\Sigma\Pi\Sigma$-jointly realize all $18$ pairs.

%%%%%%%%%%%%%%%%%%%%%
\section{Discussion}

In this work we linked two classes of  dynamical systems, one a continuous time ordinary differential equation (ODE) model and the other a discrete time 
monotone Boolean function (MBF) model. Both of these classes have been used to model dynamics of gene regulatory networks. 
We show that a very general  class of ODE models with a discontinuous right hand side admits an equivalence relation, such that all ODEs in an equivalence class share the  approximate description of dynamics in terms of the identical state transition graph STG. We then showed that each equivalence class corresponds to a collection of MBFs. The collections of MBFs can be arranged into a parameter graph where edges between the collections indicate a one-step change in one of the MBFs.

After establishing the equivalence between collections of MBFs and equivalence classes of $K$ systems of ODEs,  we pose the question about what restrictions, if any, the  algebraic form of  the right-hand side of the ODE imposes on $k$-tuples of MBFs that correspond to realizable equivalence classes of ODEs. 

 We show that the classes of pairs of MBFs with three inputs that are realizable as linear functions are a strict subset of $\Pi \Sigma$-jointly realizable pairs, which is in turn a strict subset of  $\Sigma \Pi \Sigma$-jointly realizable pairs of MBFs. 
We also show that there are pairs of MBFs with any $n\geq 4$ inputs that are $K$-jointly realizable, but are not $\Sigma \Pi \Sigma$ realizable. 
To summarize,the increased complexity of the algebraic expression provides a richer class of models as measured by the set of MBFs that can be realized in a parameter graph. 

Our work opens up many interesting questions about the joint realizability of collections of MBFs.
We will briefly discuss two potential sets of questions.
First, we defined an infinite nested set of classes of nonlinearities. While we only discussed the first three $ \boldsymbol \Sigma \subsetneq \boldsymbol \Pi \boldsymbol \Sigma \subsetneq \boldsymbol \Sigma \boldsymbol \Pi \boldsymbol \Sigma$,  adding alternating products and sums creates larger and larger classes of functions. Do our results extend in this direction?  In other words, are there pairs of monotone Boolean functions that are realizable in a parameter graph via class  $s+1$, but are not realizable in class $s$? Furthermore, is it possible that there are pairs of MBFs that are not realizable in any of the infinite progression of algebraic classes with alternating products and sums, but are $K$-jointly realizable? 

The second class of questions generalizes pairs of monotone Boolean functions to $k$-tuples,   
Our results derive some constraints for pairs of MBFs, which are then used to prove the main results about differences in 
$\ast$-joint realizability. While these results apply pairwise to any $k$-tuple of
MBFs with $f_1 \prec f_2 \prec \ldots \prec f_k$, to rule out realizability of some tuples, we do not know if  there any additional constraints that arise from 
considering, say, triples of functions $f \prec g \prec h$, or $k$-tuples of MBFs. 

By providing a link between a class of discontinuous differential equations and the collection of $k$-tuples of MBFs, this paper provides an opening to a class of new problems in the field of monotone Boolean functions. 

\appendix
\section{Proofs of Theorem~\ref{thm:factorTermComparable} and ~\ref{thm:extendedEmbedding}.}\label{appendixProofs}

\begin{proof}[Proof of Theorem~\ref{thm:factorTermComparable}]
We will proceed by contradiction. Suppose $z_\ell$ is a factor or simple term and suppose the negation of Equations~\eqref{test} and \eqref{test2}. The negation of \eqref{test} is there exists some point $\vec y \in \bF_\ell$ such that $g(\vec y) = 0$ and $f(\vec y + \hat e_\ell) = 1$. The negation of \eqref{test2} implies there exists some point $\vec w \in \bF_\ell$ such that $g(\vec w) = 1$ and $f(\vec w + \hat e_\ell) = 0$. Let $\theta_f$ and $\theta_g$ be the thresholds from \textbf{RN} associated to the two functions respectively. From the definition of $\ast$-jointly realizable, we know
\begin{align}
g(\vec y) = 0 &\iff \Lambda(\phi(\vec y)) < \theta_g \label{14} \\ 
f(\vec y + \hat e_\ell) = 1 &\iff \Lambda(\phi(\vec y+ \hat e_\ell)) > \theta_f  \label{15}\\
g(\vec w) = 1 &\iff \Lambda(\phi(\vec w)) > \theta_g \label{16}\\ 
f(\vec w + \hat e_\ell) = 0 &\iff \Lambda(\phi(\vec w+ \hat e_\ell)) < \theta_f \label{17}
\end{align}
We now proceed by cases.

\textbf{Case 1: ($z_\ell$ is a factor)}
Recall from the definition of factor that there exists a function $\Lambda'$ that does not depend on $z_\ell$ such that $\Lambda = z_\ell\Lambda'$. We have $z_\ell = \phi_\ell(v_\ell)$ for any $\vec v \in \B^n$. 
Taking $\vec v \in \bF_\ell$ we have
\begin{align*}
  \Lambda(\phi(\vec v)) &= \phi_\ell(0) \Lambda'(\phi'(\ttCol_\ell(\vec v))) \\
  \Lambda(\phi(\vec v + \hat e_\ell)) &= \phi_\ell(1) \Lambda'(\phi'(\ttCol_\ell(\vec v))).
\end{align*}
We used the collapse operation because $\Lambda'$ is independent of $z_\ell$ and we took $\phi'(\ttCol_\ell(\vec v)) = (\phi_1(v_1),\dots,\phi_{\ell-1}(v_{\ell-1}),\phi_{\ell+1}(v_{\ell+1}),\dots,\phi_n(v_n))$. Lastly, we used $\ttCol_\ell(\vec v) = \ttCol_\ell(\vec v+\hat e_\ell)$.
We conclude that for any $\vec v \in \bF_\ell$
\[
\frac{\phi_\ell(1)}{\phi_\ell(0)} \Lambda(\phi(\vec v)) = \Lambda(\phi(\vec v + \hat e_\ell)).
\]
From~\eqref{15} and~\eqref{17}, we may write
\begin{align*}
\frac{\phi_\ell(1)}{\phi_\ell(0)} \Lambda(\phi(\vec y)) > \theta_f  \iff \Lambda(\phi(\vec y)) > \frac{\phi_\ell(0)}{\phi_\ell(1)} \theta_f\\
\frac{\phi_\ell(1)}{\phi_\ell(0)} \Lambda(\phi(\vec w)) < \theta_f \iff \Lambda(\phi(\vec w)) < \frac{\phi_\ell(0)}{\phi_\ell(1)} \theta_f
\end{align*}
and combining with \eqref{14} and \eqref{16} we obtain
\[
\theta_g > \frac{\phi_\ell(0)}{\phi_\ell(1)} \theta_f \text{ and }
\theta_g < \frac{\phi_\ell(0)}{\phi_\ell(1)} \theta_f
\]
a clear contradiction.

\textbf{Case 2: ($z_\ell$ is a simple term)}
Similar to Case 1, the key fact is, if $z_\ell$ is a simple term, we know for any $\vec v \in \bF_\ell$
\[
\Lambda(\phi(\vec v)) +\left( \phi_\ell(1) - \phi_\ell(0) \right) = \Lambda(\phi(\vec v + \hat e_\ell)),
\]
We then make a similar argument as before. Equations~\eqref{15} and \eqref{17} give
\begin{align*}
\Lambda(\phi(\vec y)) +\left( \phi_\ell(1) - \phi_\ell(0) \right)> \theta_1  \iff \Lambda(\phi(\vec y)) >  \theta_1 -\left( \phi_\ell(1) - \phi_\ell(0) \right)\\
\Lambda(\phi(\vec w)) +\left( \phi_\ell(1) - \phi_\ell(0) \right)< \theta_1 \iff \Lambda(\phi(\vec w)) < \theta_1 -\left( \phi_\ell(1) - \phi_\ell(0) \right)
\end{align*}
and combining with \eqref{14} and \eqref{16} we obtain
\[
\theta_g > \theta_f -\left( \phi_\ell(1) - \phi_\ell(0) \right)\text{ and }
\theta_g < \theta_f -\left( \phi_\ell(1) - \phi_\ell(0) \right)
\]
which is our desired contradiction.
\end{proof}

\begin{proof}[Proof of Theorem~\ref{thm:extendedEmbedding}]

Let $f, g: \B^n \to \B$, with $f \prec g$, be $\ast$-jointly realizable  MBFs, and let $(\Lambda\circ\phi,$ \textbf{RN}) jointly $\ast$-realize $(f,g)$. Let $\theta_f$ and $\theta_g$ be the thresholds associated to the two realizations of these functions.

We seek to construct a $\ast$-interaction function $\Lambda' : \bbR^{n-1}_+ \to \R_+$, a map $\phi': \B^{n-1} \to \R^{n-1}_+$, and a weighted regulatory network \textbf{RN}$'_U$ with thresholds $\theta_f'$ and $\theta_g'$ so that $(\Lambda'\circ\phi',$ \textbf{RN}$'_U$) $\ast$-jointly realizes  $f'_U, g'_U : \B^{n-1} \to \B$. In doing so we will prove a). In each of the following cases, the construction of $\Lambda'$ does not depend on whether $U = \bF_\ell$ or $U = \bC_\ell$, and so b) will follow immediately. 

Recall that we are going to collapse over the $\ell^{\mbox{th}}$ dimension. In the following proof, we will be considering a node in \textbf{RN} that has an incoming edge from node $\ell$ and two targets, one of which is associated to the MBF $f$ and the other to $g$. It will be useful to define the graph \textbf{RN}$'$ to be the network \textbf{RN} without the edge from $\ell$ to the node under consideration. \textbf{RN}$'$ is an intermediate step to the construction of \textbf{RN}$'_U$.

Using the observations in  Remark~\ref{rem:realizabilitySpecialCases}, we note that regardless of the specific value of $\ast$, we can always assume that $\Lambda \in\boldsymbol \Sigma \boldsymbol\Pi\boldsymbol\Sigma $  defined in (\ref{big:class}). 
Therefore there exist sets $W_1,\dots, W_L$, where $W_1,\dots,W_L$ partitions the set $\{1,\dots,n\}$,
and for each $k \in \{1,\dots,L\}$, there exists $M_k \in \N$ so that the sets $V_{k,1}, \dots, V_{k,M_k}$ partition the set $W_k$, such that 
\[
\Lambda = \sum_{W_k} \left( \prod_{V_{k,j}} \left (\sum_{i \in V_{k,j}} z_i \right) \right)
\]
There is exactly one set in the partition, call it $W_p$, such that $\ell \in W_p$. Furthermore, there is exactly one $V_{p,*}$ that contains $\ell$, call it $V_{p,q}$. We now proceed by cases. Define the map $\delta:\{1,\dots,n\}\setminus\{\ell\} \to \{1,\dots,n-1\}$ as
\[
\delta(j) = \begin{cases}
j &\text{if } j < \ell \\
j-1 &\text{if } j > \ell 
\end{cases}
\]
We will use the map $\delta$ to construct the interaction function $\Lambda'$. If $\ast$ is $\Sigma\Pi\Sigma$, we need to consider Cases 1, 2, and 3.
However, if $\ast$ is $\Pi\Sigma$, then $W_1=\{1,\dots,n\}$, and we only need to consider Cases 2 and 3, and if $\ast$ is $\Sigma$, then $W_1=V_{1,1}=\{1,\dots,n\}$, and we only need to consider Case 3.

\textbf{Case 1: ($W_p = \{\ell\}$)} In this case the map $\Lambda$ has the form
\begin{equation*}
\Lambda = z_\ell + \sum_{\substack{W_k \\ k \neq p}} \left( \prod_{V_{k,j}} \left (\sum_{i \in V_{k,j}} z_i \right) \right)
\end{equation*}
where $z_i = \phi_i(y_i)$ for $\vec y \in \B^n$. We construct the interaction function $\Lambda'$ as 
\begin{equation*}
\Lambda' = \sum_{\substack{W_k \\ k \neq p}} \left( \prod_{V_{k,j}} \left (\sum_{i \in V_{k,j}} z_{\delta(i)} \right) \right)
\end{equation*}
and define $z_{\delta(i)} = \phi'_i(y_i) = \phi_{\delta^{-1}(i)}(y_i)$. We then set $\phi' = (\phi_1',\dots,\phi_{n-1}')$. This construction ensures, for any $\vec y = (y_1,\dots,y_n) \in \B^n$, 
\[
\Lambda(\phi(\vec y) )= \Lambda'(\phi'(y_{\delta^{-1}(1)}, \dots, y_{\delta^{-1}(n-1)})) + \phi_\ell(y_\ell)
\]
If $U = \bF_\ell$, then for all $\vec y = (y_1,\dots,y_n) \in U$, we know $y_\ell = 0$. Therefore, 
\[ 
\Lambda(\phi(\vec y) )= \Lambda'(\phi'(y_{\delta^{-1}(1)}, \dots, y_{\delta^{-1}(n-1)})) + \phi_\ell(0).
\]

Set $\theta_f' = \max\{0,\theta_f-\phi_\ell(0)\}$ and $\theta_g' = \max\{0,\theta_g-\phi_\ell(0)\}$. It is possible that $\theta_f' = \theta_g'$ at this point. However, since $\Lambda \circ \phi$ takes on finitely many values, we can always perturb one threshold by a small enough $\ep$ to guarantee our inequalities still hold. After this potential perturbation, replace $\theta_f$ and $\theta_g$ in \textbf{RN}$'$ with $\theta_f'$ and $\theta_g'$ to complete the construction of \textbf{RN}$'_{\bF_\ell}$. This construction means that 
\[  \Lambda(\phi(y_1,\dots,y_n) ) \lessgtr \theta_f \iff  \Lambda'(\phi'(y_{\delta^{-1}(1)}, \dots, y_{\delta^{-1}(n-1)}) ) \lessgtr \theta'_f \]
and likewise for $\theta_g$ and $\theta_g'$. Therefore $(\Lambda' \circ \phi',$ \textbf{RN}$'_{\bF_\ell}$) $*$-jointly realizes $f'_{\bF_\ell}$ and $g'_{\bF_\ell}$.

Similarly, if $U = \bC_\ell$, then for all $\vec y = (y_1,\dots,y_n) \in U$, we have
\[ 
\Lambda(\phi(\vec y) )= \Lambda'(\phi'(y_{\delta^{-1}(1)}, \dots, y_{\delta^{-1}(n-1)})) + \phi_\ell(1).
\]
We set $\theta_f' = \max\{0,\theta_f-\phi_\ell(1)\}$ and $\theta_g' = \max\{0,\theta_g-\phi_\ell(1)\}$, perturbed by small enough $\ep>0$, if necessary,  to replace $\theta_f$ and $\theta_g$ in \textbf{RN}$'$ and complete the construction of \textbf{RN}$'_{\bC_\ell}$. Then $(\Lambda' \circ \phi',$ \textbf{RN}$'_{\bC_\ell}$) $*$-jointly realizes $f'_{\bC_\ell}$ and $g'_{\bC_\ell}$.

\textbf{Case 2: ($W_p \setminus \{\ell\} \neq \emptyset$ and $V_{p,q} = \{\ell\}$) } The interaction function $\Lambda'$ without the $\ell^{\mbox{th}}$ element is 
\[
  \Lambda' = \sum_{W_k} \left( \prod_{\substack{V_{k,j} \\ j \neq q}} \left (\sum_{i \in V_{k,j}} z_{\delta(i)} \right) \right)
\]
In this case we know that $W_p$ is a partition of at minimum size two. Pick exactly one $t \in W_p \setminus \{q\}$. 
Construct $z_{\delta(j)} = \phi'_j(y_j)$ as follows: if $U = \bF_i$, then for $\vec y \in \B^n$
\[
\phi_j' (y_j) = \begin{cases}
\phi_{\delta^{-1}(j)} (y_j)  \phi_\ell(0) & \text{if } j \in V_{p,t} \\
\phi_{\delta^{-1}(j)} (y_j) & \text{otherwise}
\end{cases} \quad .
\]
However, if $U = \bC_i$, then
\[
\phi_j' (y_j) = \begin{cases}
\phi_{\delta^{-1}(j)} (y_j)  \phi_\ell(1) & \text{if } j \in V_{p,t} \\
\phi_{\delta^{-1}(j)} (y_j) & \text{otherwise}
\end{cases} \quad .
\]
We have ensured for any $\vec y = (y_1,\dots,y_n) \in U$, 
\[ 
\Lambda(\phi(\vec y) )= \Lambda'(\phi'(y_{\delta^{-1}(1)}, \dots, y_{\delta^{-1}(n-1)}))
\]
Setting $\theta_f' = \theta_f$ and $\theta_g' = \theta_g$ obtains the desired result; i.e., \textbf{RN}$'_U$ = \textbf{RN}$'$ and $(\Lambda' \circ \phi',$\textbf{RN}$')$ $*$-jointly realizes $f'_U, g'_U$.

\textbf{Case 3: ($W_p \setminus \{\ell\} \neq \emptyset$ and $V_{p,q} \setminus \{\ell\} \neq \emptyset$) }
In this case the original interaction function $\Lambda$ takes the form
\[
\Lambda = \sum_{\substack{W_k}} \left( \prod_{\substack{V_{k,j}  \\ (k,j) \neq (p,q)}} \left (\sum_{\substack{i \in V_{k,j}}} z_i \right) \right) + \left( \prod_{V_{p,q}} \left (\sum_{i \in V_{p,q}} z_i \right) \right) \ .
\]
The interaction function $\Lambda'$ is constructed as 
\[
\Lambda' = \sum_{\substack{W_k}} \left( \prod_{\substack{V_{k,j}  \\ (k,j) \neq (p,q)}} \left (\sum_{i \in V_{k,j}} z_{\delta(i)} \right) \right) + \left( \prod_{V_{p,q}} \left (\sum_{\substack{i \in V_{p,q} \\ i \neq \ell}} z_{\delta(i)}  \right) \right) \ .
\]
Pick exactly one element $t \in V_{p,q} \setminus \{\ell\}$, and construct $\phi'_j$ as follows: if $U = \bF_\ell$, then
\[
\phi_j' (y_j) := \begin{cases}
\phi_{\delta^{-1}(j)} (y_j) +  \phi_\ell(0) & \text{if } j = \delta(t) \\
\phi_{\delta^{-1}(j)} (y_j) & \text{otherwise}
\end{cases} \quad .
\]
However, if $U = \bC_\ell$, then
\[
\phi_j' (y_j) := \begin{cases}
\phi_{\delta^{-1}(j)} (y_j) +  \phi_\ell(1) & \text{if } j = \delta(t) \\
\phi_{\delta^{-1}(j)} (y_j) & \text{otherwise}
\end{cases} \quad .
\]
We then set $\phi' = (\phi_1',\dots,\phi_{n-1}')$. This construction ensures, for any $\vec y = (y_1,\dots,y_n) \in U$, 
\[ 
\Lambda(\phi(\vec y) )= \Lambda'(\phi'(y_{\delta^{-1}(1)}, \dots, y_{\delta^{-1}(n-1)}))
\]
and so by setting \textbf{RN}$'_U=$ \textbf{RN}$'$ as in Case 2,  we obtain the desired result.
\end{proof}

\section{Supporting tables and figures}\label{problemCubes}

Table~\ref{table:problemCube} lists explicitly all pairs $f\prec g \in {\bf MBF^+(3)}$ of Boolean functions with three inputs that are not $\Sigma$-jointly realizable. These correspond to non-threshold monotone Boolean functions in $\MBF^+(4)$ with $4$ inputs. In each case the direction that allows us to use Theorem~\ref{thm:factorTermComparable} to rule out $\Sigma$-joint realizability is indicated in the last column. 

\begin{table}[htp]
\begin{center}
\begin{tabular}{|c|c|c|c|c|c|c|c|c|c|}  \cline{2-9}
\multicolumn{1}{l|}{} & \multicolumn{8}{c|}{Input} & \multicolumn{1}{c}{} \\ \cline{2-10} 
\multicolumn{1}{l|}{} & 000 & 001 & 010 & 100 & 110 & 101 & 011 & 111 & \multicolumn{1}{c|}{Direction(s)} \\ \hline
$f_1 \prec g_1$ & 00  & 01  & 00  & 01  & 11  & 01  & 01  & 11 & $y_1$ \\ 
$f_2 \prec g_2$ & 00  & 00  & 01  & 01  & 01  & 01  & 11  & 11 & $y_2$ \\    
$f_3 \prec g_3$ & 00  & 01  & 01  & 00  & 01  & 11  & 01  & 11 & $y_3$ \\ 
$f_4 \prec g_4$ & 00  & 01  & 01  & 00  & 11  & 01  & 01  & 11 & $y_2$ \\ 
$f_5 \prec g_5$ & 00  & 01  & 00  & 01  & 01  & 01  & 11  & 11 & $y_3$ \\ 
$f_6 \prec g_6$ & 00  & 00  & 01  & 01  & 01  & 11  & 01  & 11 & $y_1$ \\ 
$f_7 \prec g_7$ & 00  & 01  & 00  & 00  & 11  & 01  & 01  & 11 & $y_1,y_2$ \\ 
$f_8 \prec g_8$ & 00  & 00  & 00  & 01  & 01  & 01  & 11  & 11 & $y_2,y_3$ \\ 
$f_9 \prec g_9$ & 00  & 00  & 01  & 00  & 01  & 11  & 01  & 11 & $y_1,y_3$ \\ 
$f_{10} \prec g_{10}$ & 00  & 01  & 00  & 01  & 11  & 01  & 11  & 11 & $y_1,y_3$ \\ 
$f_{11} \prec g_{11}$ & 00  & 00  & 01  & 01  & 01  & 11  & 11  & 11 & $y_1,y_2$ \\ 
$f_{12} \prec g_{12}$ & 00  & 01  & 01  & 00  & 11  & 11  & 01  & 11 & $y_2,y_3$ \\ 
$f_{13} \prec g_{13}$ & 00  & 01  & 00  & 00  & 11  & 01  & 11  & 11 & $y_1$ \\ 
$f_{14} \prec g_{14}$ & 00  & 00  & 00  & 01  & 01  & 11  & 11  & 11 & $y_2$ \\ 
$f_{15} \prec g_{15}$ & 00  & 00  & 01  & 00  & 11  & 11  & 01  & 11 & $y_3$ \\ 
$f_{16} \prec g_{16}$ & 00  & 00  & 00  & 01  & 11  & 01  & 11  & 11 & $y_1$ \\ 
$f_{17} \prec g_{17}$ & 00  & 00  & 01  & 00  & 01  & 11  & 11  & 11 & $y_2$ \\ 
$f_{18} \prec g_{18}$ & 00  & 01  & 00  & 00  & 11  & 11  & 01  & 11 & $y_3$ \\ \hline
\end{tabular}
\end{center}
\caption{All $18$ non $\Sigma$-jointly realizable pairs $(f,g)$ for $n=3$ such that $f \prec g$. The column for input $\vec y$ shows the pair of values of $f_i(y)g_i(y)$. For example, column 001 has 01 in the first row. This means that $f_1(001) = 0$ and $g_1(001) = 1$. The direction(s) that allow the use of Theorem~\ref{thm:factorTermComparable} to rule out $\Sigma$-joint realizability are indicated in the last column.}\label{table:problemCube}
\end{table}

In Table~\ref{table:realizeProblemCubes} we show explicitly the form of the interaction function $\Lambda \in \boldsymbol \Sigma  \boldsymbol\Pi  \boldsymbol\Sigma$,
the values $\phi(1) = (\phi_1(1),\phi_2(1),\phi_3(1))$, and the value of the thresholds $\theta_g, \theta_f$ that $\Sigma\Pi\Sigma$-jointly realize all pairs $f_i\prec g_i$ given in Table~\ref{table:problemCube}. In all cases, we set $\phi_1(0)=\phi_2(0)=\phi_3(0)=1$. This list, together with $150$ pairs that are $\Sigma$-jointly realizable, exhausts all pairs $f\prec g$ of functions in ${\MBF^+(3)}$ and proves that for $n=3$ every such pair is $\Sigma\Pi\Sigma$-joint realizable. 

Figure~\ref{fig:megaPG} shows all pairs $f \prec g \in \MBF^+(2)$. It is also the factor of the parameter graph associated to node 1 in the network in Figure~\ref{fig:exampleRN}, after transforming the Boolean functions to be positive under the map $\beta$ given in \eqref{eq:beta}.

\begin{table}[htp!]
\begin{center}
\begin{tabular}{|c|c|c|c|c|} \hline
$i$ & $\Lambda$ & $\phi(111)$ & $\theta_g$ & $\theta_f$\\ \hline
1 & $z_1z_2+z_3$ & $(3,2,3)$ & 3.5 & 6.5\\ \hline
2 & $z_1+z_2z_3$ & $(3,3,2)$ & 3.5 & 6.5\\ \hline
3 & $z_2+z_1z_3$ & $(2,3,3)$ & 3.5 & 6.5\\ \hline
4 & $z_1z_2+z_3$ & $(2,3,3)$ & 3.5 & 6.5\\ \hline
5 & $z_1+z_2z_3$ & $(3,2,3)$ & 3.5 & 6.5\\ \hline
6 & $z_2+z_1z_3$ & $(3,3,2)$ & 3.5 & 6.5\\ \hline
7 & $z_1z_2+z_3$ & $(3,3,4)$ & 4.5 & 8\\ \hline
8 & $z_1+z_2z_3$ & $(4,3,3)$ & 4.5 & 8\\ \hline
9 & $z_2+z_1z_3$ & $(3,4,3)$ & 4.5 & 8\\ \hline
10 & $z_2(z_1+z_3)$ & $(4,2,4)$ & 4.5 & 9\\ \hline
11 & $z_3(z_1+z_2)$ & $(4,4,2)$ & 4.5 & 9\\ \hline
12 & $z_1(z_2+z_3)$ & $(2,4,4)$ & 4.5 & 9\\ \hline
13 & $z_1z_2+z_3$ & $(2,3,4)$ & 4.5 & 6.5\\ \hline
14 & $z_1+z_2z_3$ & $(4,2,3)$ & 4.5 & 6.5\\ \hline
15 & $z_2+z_1z_3$ & $(3,4,2)$ & 4.5 & 6.5\\ \hline
16 & $z_2(z_1+z_3)$ & $(4,2,3)$ & 4.5 & 7.5\\ \hline
17 & $z_3(z_1+z_2)$ & $(3,4,2)$ & 4.5 & 7.5\\ \hline
18 & $z_1(z_2+z_3)$ & $(2,3,4)$ & 4.5 & 7.5\\ \hline
\end{tabular}
\end{center}
\caption{Example $(\Lambda\circ \phi, \theta_1,\theta_2)$ that $\Sigma\Pi\Sigma$-jointly realize all pairs $f_i\prec g_i$ given in Table~\ref{table:problemCube}. For all rows, $\phi_1(0)=\phi_2(0)=\phi_3(0)=1$.}
\label{table:realizeProblemCubes}
\end{table}%

\begin{figure}[htp!]
    \centering
   \begin{tikzpicture}[node distance = .3cm]
\tikzset{my node/.style = {circle, draw, very thick, inner sep=0pt,minimum size=0pt}}
	\node[my node] (1) {
	\begin{tikzpicture}[node distance = .2cm]
\tikzset{my node/.style = {circle, draw, inner sep=1pt,minimum size=0pt}}
	\node[my node] (A) {\small 00};
	\node[my node] (B) [right=of A] {\small 01};
	\node[my node] (C) [above=of A] {\small 10};
	\node[my node] (D) [right=of C] {\small 11};

 	\draw[-, thick] [] (A) to [] (B);
    \draw[-, thick] [] (A) to [] (C);
    \draw[-, thick] [] (C) to [] (D);
    \draw[-, thick] [] (B) to [] (D);
\end{tikzpicture}
};
	\node[my node] (2) [below=of 1] {
	\begin{tikzpicture}[ node distance = .2cm]
\tikzset{my node/.style = {circle, draw, inner sep=1pt,minimum size=0pt}}
	\node[my node, fill=lightgray] (A) {\small 00};
	\node[my node] (B) [right=of A] {\small 01};
	\node[my node] (C) [above=of A] {\small 10};
	\node[my node] (D) [right=of C] {\small 11};

 	\draw[-, thick] [] (A) to [] (B);
    \draw[-, thick] [] (A) to [] (C);
    \draw[-, thick] [] (C) to [] (D);
    \draw[-, thick] [] (B) to [] (D);
\end{tikzpicture}
};
	\node[my node] (3) [below=of 2] {
	\begin{tikzpicture}[ node distance = .2cm]
\tikzset{my node/.style = {circle, draw, inner sep=1pt,minimum size=0pt}}
	\node[my node, fill=darkgray] (A) {\small 00};
	\node[my node] (B) [right=of A] {\small 01};
	\node[my node] (C) [above=of A] {\small 10};
	\node[my node] (D) [right=of C] {\small 11};

 	\draw[-, thick] [] (A) to [] (B);
    \draw[-, thick] [] (A) to [] (C);
    \draw[-, thick] [] (C) to [] (D);
    \draw[-, thick] [] (B) to [] (D);
\end{tikzpicture}
};
	\node[my node] (4) [left=of 3] {
	\begin{tikzpicture}[ node distance = .2cm]
\tikzset{my node/.style = {circle, draw, inner sep=1pt,minimum size=0pt}}
	\node[my node, fill=lightgray] (A) {\small 00};
	\node[my node, fill=lightgray] (B) [right=of A] {\small 01};
	\node[my node] (C) [above=of A] {\small 10};
	\node[my node] (D) [right=of C] {\small 11};

 	\draw[-, thick] [] (A) to [] (B);
    \draw[-, thick] [] (A) to [] (C);
    \draw[-, thick] [] (C) to [] (D);
    \draw[-, thick] [] (B) to [] (D);
\end{tikzpicture}
};
	\node[my node] (5) [right=of 3] {
	\begin{tikzpicture}[ node distance = .2cm]
\tikzset{my node/.style = {circle, draw, inner sep=1pt,minimum size=0pt}}
	\node[my node, fill=lightgray] (A) {\small 00};
	\node[my node] (B) [right=of A] {\small 01};
	\node[my node, fill=lightgray] (C) [above=of A] {\small 10};
	\node[my node] (D) [right=of C] {\small 11};

 	\draw[-, thick] [] (A) to [] (B);
    \draw[-, thick] [] (A) to [] (C);
    \draw[-, thick] [] (C) to [] (D);
    \draw[-, thick] [] (B) to [] (D);
\end{tikzpicture}
};
	\node[my node] (6) [below=of 3] {
	\begin{tikzpicture}[node distance = .2cm]
\tikzset{my node/.style = {circle, draw, inner sep=1pt,minimum size=0pt}}
	\node[my node, fill=lightgray] (A) {\small 00};
	\node[my node, fill=lightgray] (B) [right=of A] {\small 01};
	\node[my node, fill=lightgray] (C) [above=of A] {\small 10};
	\node[my node] (D) [right=of C] {\small 11};

 	\draw[-, thick] [] (A) to [] (B);
    \draw[-, thick] [] (A) to [] (C);
    \draw[-, thick] [] (C) to [] (D);
    \draw[-, thick] [] (B) to [] (D);
\end{tikzpicture}
	};
	\node[my node] (7) [left=of 6] {
	\begin{tikzpicture}[node distance = .2cm]
\tikzset{my node/.style = {circle, draw, inner sep=1pt,minimum size=0pt}}
	\node[my node, fill=darkgray] (A) {\small 00};
	\node[my node, fill=lightgray] (B) [right=of A] {\small 01};
	\node[my node, fill=white] (C) [above=of A] {\small 10};
	\node[my node, fill=white] (D) [right=of C] {\small 11};

 	\draw[-, thick] [] (A) to [] (B);
    \draw[-, thick] [] (A) to [] (C);
    \draw[-, thick] [] (C) to [] (D);
    \draw[-, thick] [] (B) to [] (D);
\end{tikzpicture}
};
	\node[my node] (8) [right=of 6] {
	\begin{tikzpicture}[node distance = .2cm]
\tikzset{my node/.style = {circle, draw, inner sep=1pt,minimum size=0pt}}
	\node[my node, fill=darkgray] (A) {\small 00};
	\node[my node, fill=white] (B) [right=of A] {\small 01};
	\node[my node, fill=lightgray] (C) [above=of A] {\small 10};
	\node[my node, fill=white] (D) [right=of C] {\small 11};

 	\draw[-, thick] [] (A) to [] (B);
    \draw[-, thick] [] (A) to [] (C);
    \draw[-, thick] [] (C) to [] (D);
    \draw[-, thick] [] (B) to [] (D);
\end{tikzpicture}
};
	\node[my node] (9) [below left=0.9cm and .5cm of 6] {
	\begin{tikzpicture}[node distance = .2cm]
\tikzset{my node/.style = {circle, draw, inner sep=1pt,minimum size=0pt}}
	\node[my node, fill=lightgray] (A) {\small 00};
	\node[my node, fill=lightgray] (B) [right=of A] {\small 01};
	\node[my node, fill=lightgray] (C) [above=of A] {\small 10};
	\node[my node, fill=lightgray] (D) [right=of C] {\small 11};

 	\draw[-, thick] [] (A) to [] (B);
    \draw[-, thick] [] (A) to [] (C);
    \draw[-, thick] [] (C) to [] (D);
    \draw[-, thick] [] (B) to [] (D);
\end{tikzpicture}
};
	\node[my node] (10) [right=of 9] {
	\begin{tikzpicture}[node distance = .2cm]
\tikzset{my node/.style = {circle, draw, inner sep=1pt,minimum size=0pt}}
	\node[my node, fill=darkgray] (A) {\small 00};
	\node[my node, fill=lightgray] (B) [right=of A] {\small 01};
	\node[my node, fill=lightgray] (C) [above=of A] {\small 10};
	\node[my node, fill=white] (D) [right=of C] {\small 11};

 	\draw[-, thick] [] (A) to [] (B);
    \draw[-, thick] [] (A) to [] (C);
    \draw[-, thick] [] (C) to [] (D);
    \draw[-, thick] [] (B) to [] (D);
\end{tikzpicture}
};
	\node[my node] (11) [right=of 10] {\begin{tikzpicture}[node distance = .2cm]
\tikzset{my node/.style = {circle, draw, inner sep=1pt,minimum size=0pt}}
	\node[my node, fill=darkgray] (A) {\small 00};
	\node[my node, fill=white] (B) [right=of A] {\small 01};
	\node[my node, fill=darkgray] (C) [above=of A] {\small 10};
	\node[my node, fill=white] (D) [right=of C] {\small 11};

 	\draw[-, thick] [] (A) to [] (B);
    \draw[-, thick] [] (A) to [] (C);
    \draw[-, thick] [] (C) to [] (D);
    \draw[-, thick] [] (B) to [] (D);
\end{tikzpicture}
};
	\node[my node] (12) [left=of 9] {\begin{tikzpicture}[node distance = .2cm]
\tikzset{my node/.style = {circle, draw, inner sep=1pt,minimum size=0pt}}
	\node[my node, fill=darkgray] (A) {\small 00};
	\node[my node, fill=darkgray] (B) [right=of A] {\small 01};
	\node[my node, fill=white] (C) [above=of A] {\small 10};
	\node[my node, fill=white] (D) [right=of C] {\small 11};

 	\draw[-, thick] [] (A) to [] (B);
    \draw[-, thick] [] (A) to [] (C);
    \draw[-, thick] [] (C) to [] (D);
    \draw[-, thick] [] (B) to [] (D);
\end{tikzpicture}
};
	\node[my node] (13) [below right=0.9cm and .5cm of 9] {
	\begin{tikzpicture}[node distance = .2cm]
\tikzset{my node/.style = {circle, draw, inner sep=1pt,minimum size=0pt}}
	\node[my node, fill=darkgray] (A) {\small 00};
	\node[my node, fill=lightgray] (B) [right=of A] {\small 01};
	\node[my node, fill=lightgray] (C) [above=of A] {\small 10};
	\node[my node, fill=lightgray] (D) [right=of C] {\small 11};

 	\draw[-, thick] [] (A) to [] (B);
    \draw[-, thick] [] (A) to [] (C);
    \draw[-, thick] [] (C) to [] (D);
    \draw[-, thick] [] (B) to [] (D);
\end{tikzpicture}
};
	\node[my node] (14) [right=of 13] {
	\begin{tikzpicture}[node distance = .2cm]
\tikzset{my node/.style = {circle, draw, inner sep=1pt,minimum size=0pt}}
	\node[my node, fill=darkgray] (A) {\small 00};
	\node[my node, fill=lightgray] (B) [right=of A] {\small 01};
	\node[my node, fill=darkgray] (C) [above=of A] {\small 10};
	\node[my node, fill=white] (D) [right=of C] {\small 11};

 	\draw[-, thick] [] (A) to [] (B);
    \draw[-, thick] [] (A) to [] (C);
    \draw[-, thick] [] (C) to [] (D);
    \draw[-, thick] [] (B) to [] (D);
\end{tikzpicture}
};
	\node[my node] (15) [left=of 13] {
	\begin{tikzpicture}[node distance = .2cm]
\tikzset{my node/.style = {circle, draw, inner sep=1pt,minimum size=0pt}}
	\node[my node, fill=darkgray] (A) {\small 00};
	\node[my node, fill=darkgray] (B) [right=of A] {\small 01};
	\node[my node, fill=lightgray] (C) [above=of A] {\small 10};
	\node[my node, fill=white] (D) [right=of C] {\small 11};

 	\draw[-, thick] [] (A) to [] (B);
    \draw[-, thick] [] (A) to [] (C);
    \draw[-, thick] [] (C) to [] (D);
    \draw[-, thick] [] (B) to [] (D);
\end{tikzpicture}
};
	\node[my node] (16) [below=of 13] {
	\begin{tikzpicture}[node distance = .2cm]
\tikzset{my node/.style = {circle, draw, inner sep=1pt,minimum size=0pt}}
	\node[my node, fill=darkgray] (A) {\small 00};
	\node[my node, fill=darkgray] (B) [right=of A] {\small 01};
	\node[my node, fill=darkgray] (C) [above=of A] {\small 10};
	\node[my node, fill=white] (D) [right=of C] {\small 11};

 	\draw[-, thick] [] (A) to [] (B);
    \draw[-, thick] [] (A) to [] (C);
    \draw[-, thick] [] (C) to [] (D);
    \draw[-, thick] [] (B) to [] (D);
\end{tikzpicture}
};
	\node[my node] (17) [right=of 16] {
	\begin{tikzpicture}[node distance = .2cm]
\tikzset{my node/.style = {circle, draw, inner sep=1pt,minimum size=0pt}}
	\node[my node, fill=darkgray] (A) {\small 00};
	\node[my node, fill=lightgray] (B) [right=of A] {\small 01};
	\node[my node, fill=darkgray] (C) [above=of A] {\small 10};
	\node[my node, fill=lightgray] (D) [right=of C] {\small 11};

 	\draw[-, thick] [] (A) to [] (B);
    \draw[-, thick] [] (A) to [] (C);
    \draw[-, thick] [] (C) to [] (D);
    \draw[-, thick] [] (B) to [] (D);
\end{tikzpicture}
};
	\node[my node] (18) [left=of 16] {
	\begin{tikzpicture}[node distance = .2cm]
\tikzset{my node/.style = {circle, draw, inner sep=1pt,minimum size=0pt}}
	\node[my node, fill=darkgray] (A) {\small 00};
	\node[my node, fill=darkgray] (B) [right=of A] {\small 01};
	\node[my node, fill=lightgray] (C) [above=of A] {\small 10};
	\node[my node, fill=lightgray] (D) [right=of C] {\small 11};

 	\draw[-, thick] [] (A) to [] (B);
    \draw[-, thick] [] (A) to [] (C);
    \draw[-, thick] [] (C) to [] (D);
    \draw[-, thick] [] (B) to [] (D);
\end{tikzpicture}
};
	\node[my node] (19) [below=of 16] {
	\begin{tikzpicture}[node distance = .2cm]
\tikzset{my node/.style = {circle, draw, inner sep=1pt,minimum size=0pt}}
	\node[my node, fill=darkgray] (A) {\small 00};
	\node[my node, fill=darkgray] (B) [right=of A] {\small 01};
	\node[my node, fill=darkgray] (C) [above=of A] {\small 10};
	\node[my node, fill=lightgray] (D) [right=of C] {\small 11};

 	\draw[-, thick] [] (A) to [] (B);
    \draw[-, thick] [] (A) to [] (C);
    \draw[-, thick] [] (C) to [] (D);
    \draw[-, thick] [] (B) to [] (D);
\end{tikzpicture}
};
	\node[my node] (20) [below=of 19] {
	\begin{tikzpicture}[node distance = .2cm]
\tikzset{my node/.style = {circle, draw, inner sep=1pt,minimum size=0pt}}
	\node[my node, fill=darkgray] (A) {\small 00};
	\node[my node, fill=darkgray] (B) [right=of A] {\small 01};
	\node[my node, fill=darkgray] (C) [above=of A] {\small 10};
	\node[my node, fill=darkgray] (D) [right=of C] {\small 11};

 	\draw[-, thick] [] (A) to [] (B);
    \draw[-, thick] [] (A) to [] (C);
    \draw[-, thick] [] (C) to [] (D);
    \draw[-, thick] [] (B) to [] (D);
\end{tikzpicture}
};
	
\tikzset{my edge/.style = {-, very thick}}	
  \draw[my edge] [] (1) to [] (2);
  \draw[my edge] [] (2) to [] (4);
  \draw[my edge] [] (2) to [] (3);
  \draw[my edge] [] (2) to [] (5);
  \draw[my edge] [] (4) to [] (7);
  \draw[my edge] [] (4) to [] (6);
  \draw[my edge] [] (3) to [] (7);
  \draw[my edge] [] (3) to [] (8);
  \draw[my edge] [] (5) to [] (6);
  \draw[my edge] [] (5) to [] (8);
  \draw[my edge] [] (7) to [] (12);
  \draw[my edge] [] (7) to [] (10);
  \draw[my edge] [] (6) to [] (9);
  \draw[my edge] [] (6) to [] (10);
  \draw[my edge] [] (8) to [] (10);
  \draw[my edge] [] (8) to [] (11);
  \draw[my edge] [] (12) to [] (15);
  \draw[my edge] [] (9) to [] (13);
  \draw[my edge] [] (11) to [] (14);
  \draw[my edge] [] (10) to [] (15);
  \draw[my edge] [] (10) to [] (13);
  \draw[my edge] [] (10) to [] (14);
  \draw[my edge] [] (15) to [] (18);
  \draw[my edge] [] (15) to [] (16);
  \draw[my edge] [] (13) to [] (18);
  \draw[my edge] [] (13) to [] (17);
  \draw[my edge] [] (14) to [] (16);
  \draw[my edge] [] (14) to [] (17);
  \draw[my edge] [] (18) to [] (19);
  \draw[my edge] [] (16) to [] (19);
  \draw[my edge] [] (17) to [] (19);
  \draw[my edge] [] (19) to [] (20);
\end{tikzpicture}
    \caption{All $20$ pairs of functions $f \prec g \in \MBF^+(2)$. Each node in the above graph represents a pair of functions, where dark grey is $\False(f) \cap \False(g)$, light grey is $\False(f) \cap \Truth(g)$, and white is $\Truth(f) \cap \Truth(g)$.}
    \label{fig:megaPG}
\end{figure}
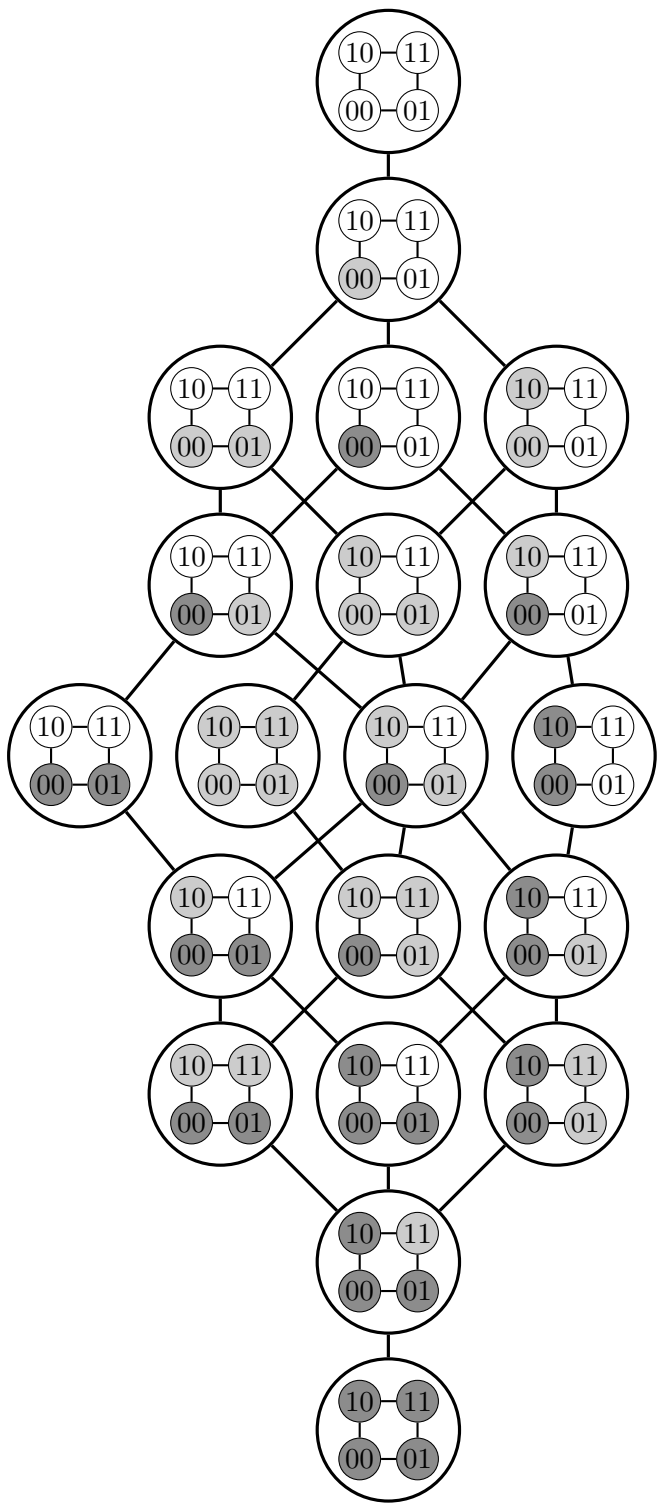

\newpage
\printbibliography

\end{document}